\newtheorem{Theorem}{Theorem}[section]
\newtheorem{Lemma}[Theorem]{Lemma}
\newtheorem{Proposition}[Theorem]{Proposition}
\newtheorem{Corollary}[Theorem]{Corollary}
\theoremstyle{definition}
\newtheorem{Definition}{Definition}[section]
\theoremstyle{remark}
\newtheorem{Remark}[Theorem]{Remark} 
\numberwithin{equation}{section}
\def\Vec#1{\mbox{\boldmath $#1$}}
\newcommand{\C}{\mathbb C}
\newcommand{\D}{\mathbb D}
\newcommand{\F}{\mathcal F}
\newcommand{\A}{\mathcal A}
\newcommand{\K}{\mathcal K}
\newcommand{\V}{\mathcal V}
\newcommand{\R}{\mathcal R}
 \newcommand{\W}{\mathcal W}
\newcommand{\G}{\mathcal G}
\newcommand{\B}{\mathcal B}
\newcommand{\mH}{\mathcal H}
\newcommand{\N}{\mathcal N}
\newcommand{\ad}{\operatorname{Ad}}
\newcommand{\id}{\operatorname{id}}
\renewcommand{\Re}{\operatorname {Re}}
\newcommand{\nm}{\nabla^{\mu}}
\newcommand{\nt}{{}^{(t)} \nabla}
\newcommand{\can}{{}^{(-1)}\nabla}
\newcommand{\anti}{{}^{(1)}\nabla}
\newcommand{\neutral}{{}^{(0)}\nabla}
\newcommand{\LG}{\Lambda G}
\newcommand{\LGC}{\Lambda G^{\mathbb C}}
\newcommand{\LBC}{\Lambda B^{\mathbb C}}
\newcommand{\LHC}{\Lambda H^{\mathbb C}}
\newcommand{\NLGC}{\Lambda^{-} G^{\mathbb C}}
\newcommand{\PNLGC}{\Lambda^{\pm} G^{\mathbb C}}
\newcommand{\LGCD}{\Lambda \G^{\mathbb C}}
\newcommand{\LBCD}{\Lambda \B^{\mathbb C}}
\newcommand{\LHCD}{\Lambda \mH^{\mathbb C}}
\newcommand{\PLGCD}{\Lambda^{+} \G^{\mathbb C}}
\newcommand{\NLGCD}{\Lambda^{-} \G^{\mathbb C}}
\newcommand{\PNLGCD}{\Lambda^{\pm} \G^{\mathbb C}}
\newcommand{\NLGCDN}{\Lambda^{-}_{*} \G^{\mathbb C}}
\newcommand{\LGD}{\Lambda \G}
\newcommand{\sk}{\operatorname {skew}}
\newcommand{\sym}{\operatorname {sym}}
\newcommand{\trace}{\operatorname {tr}_g}
\newcommand{\be}{\begin{equation*}}
\newcommand{\ee}{\end{equation*}}
\renewcommand{\P}{\mathcal P}
\renewcommand{\l}{\lambda}
\renewcommand{\H}{\mathcal H}
\begin{document}
\title{A loop group method for affine harmonic maps into Lie groups}
 \author[J. F.~Dorfmeister]{Josef F. Dorfmeister}
 \address{Fakult\"at f\"ur Mathematik, 
 TU-M\"unchen, 
 Boltzmann str. 3,
 D-85747, 
 Garching, 
 Germany}
 \email{dorfm@ma.tum.de}
\author[J.~Inoguchi]{Jun-ichi Inoguchi}
 \address{Department of Mathematical Sciences, 
 Faculty of Science,
 Yamagata University, 
 Yamagata, 990--8560, Japan}
 \email{inoguchi@sci.kj.yamagata-u.ac.jp}
 \thanks{The second named author is partially supported by Kakenhi 
24540063}
 \author[S.-P.~Kobayashi]{Shimpei Kobayashi}
 \address{Department of Mathematics, Hokkaido University, 
 Sapporo, 060-0810, Japan}
 \email{shimpei@math.sci.hokudai.ac.jp}
 \thanks{The third named author is partially supported by Kakenhi 
23740042, 26400059}
\subjclass[2010]{Primary~58E20~53C43, Secondary~22E65~22E25.}
\keywords{Lie group; harmonic map; generalized Weierstrass type representation; solvable Lie groups}
\date{\today}
\pagestyle{plain}

\begin{abstract} 
 We generalize the Uhlenbeck-Segal theory for harmonic maps 
 into compact semi-simple Lie groups to general 
 Lie groups equipped with
 torsion free bi-invariant connection.
\end{abstract}
\maketitle
\section*{Introduction}
 Harmonic maps of Riemann surfaces into compact semi-simple Lie groups,
 equipped with a
 bi-invariant Riemannian metric, have been paid much attention to by
 differential geometers
 as well as by mathematical physicists. In fact, harmonic maps of Riemann 
 surfaces into 
 compact Lie groups equipped with a bi-invariant Riemannian metric 
 are called \textit{principal chiral models} and intensively studied as 
 toy models of gauge theory in mathematical physics \cite{Zak}.

 Uhlenbeck established a fundamental theory of harmonic maps into
 the unitary group $\mathrm{U}_n$, \cite{Uhlenbeck}. In particular she proved a factorization theorem for
 harmonic $2$-spheres, the so called \textit{uniton factorization}.
 Segal showed that Uhlenbeck's theory actually works for any compact Lie
 group \cite{Segal}.
 Pedit, Wu and the first named author of the present paper
 generalized the Uhlenbeck-Segal theory to harmonic maps into
 compact Riemannian symmetric spaces, 
 now referred as to the \textit{generalized Weierstrass type representation} 
 \cite{DPW}.

 It turned out that the compactness of the target space is not
 necessary, as long as one only considers surfaces away from singularities and considers
 groups with bi-invariant metric, Riemannian or pseudo-Riemannian. 
 From a global point of view the construction principle will generally
 produce surfaces with singularities.
 A typical example for this are the spacelike CMC surfaces in 
 Minkowski 3-space. 
 In this case one considers harmonic maps into the 
 Riemannian symmetric space
 $\mathbb{H}^2=\mathrm{SL}_{2}\mathbb{R}/\mathrm{SO}_2$. 
 Harmonic maps into $\mathbb{H}^2$ are closely related to the so-called  
 $\mathrm{tt}^*$-geometry \cite{DGR}. 
 It is a very important and difficult problem to find (and describe) globally smooth
 solutions for non-compact target spaces.

 If one wants to generalize the loop group approach for 
 harmonic maps into symmetric spaces to general homogeneous 
 spaces, where the Lie group 
 only has a left-invariant metric, one encounters a 
 completely new situation.
 Clearly, the case of harmonic maps into Lie groups is a 
 first interesting case. Since abelian groups always have 
 bi-invariant metrics, the next interesting case is the one 
 of $3$-dimensional Lie groups (with left-invariant metric).

 As a matter of fact, and 
 largely independent of the issues discussed above,
 during the last ten years or so, minimal surfaces into $3$-dimensional Lie
 groups have been studied extensively. In particular, minimal surfaces in
 3-dimensional Lie groups, equipped with a left-invariant metric,
 have been investigated intensively.
 With the exception of the space $\mathbb{S}^2\times\mathbb{R}$,  the other seven 
 model spaces of Thurston geometries obviously have or can be given the
 structure of a Lie group. These are the following spaces; 
 the Euclidean $3$-space $\mathbb{E}^3$, 
 the unit $3$-sphere $\mathbb{S}^3$, 
 the hyperbolic $3$-space $\mathbb{H}^3$, 
 the model space $\mathrm{Nil}_3$
 of nilgeometry, the universal covering group
 $\widetilde{\mathrm{SL}_2\mathbb{R}}$,
 the space $\mathrm{Sol}_3$ of solvgeometry and the product space
 $\mathbb{H}^2\times \mathbb{R}$.
 The metrics on these groups  are generally only left-invariant
 with respect to the Lie group structure. Only
 the  Euclidean $3$-space $\mathbb{E}^3$ and the 
 $3$-sphere $\mathbb{S}^3$ admit bi-invariant Riemannian metrics.

 Since the generalized Weierstrass type representation for
 harmonic maps usually requires a {\it bi-invariant metric} on 
 some related Lie group,
 we can not expect this scheme to work unchanged 
for harmonic maps into Lie
 groups,
 if these Lie groups only carry a left-invariant metric.
 In fact, harmonic maps into general Lie groups do not even admit a
 zero-curvature
 representation in general.
 For instance, 
 all the explicit examples  in \cite{DIK2} of minimal 
 surfaces (except vertical planes) in $\mathrm{Nil}_3$ do not satisfy
 the zero-curvature representation described 
 in Proposition \ref{ZCRforG}.
 On the other hand, it should be remarked that the
 harmonicity equation makes sense for maps from
 Riemann surfaces into Lie groups equipped with any {\it affine connection}.

 Higaki \cite{Higaki} pointed out that if 
 a map $\varphi:M\to G/H$ of a Riemann surface into a 
 reductive homogeneous space $G/H$ 
 is harmonic with respect to the \textit{canonical connection} of $G/H$
 and if its torsion vanishes along $\varphi$, then
 $\varphi$ is \textit{equiharmonic}, that is, harmonic with
 respect to any $G$-invariant metric.
 This result indicates that there may exist
 a large class of harmonic maps with particularly nice properties.
 It is also natural to expect that harmonic maps in such a nice class
 may even admit an explicit construction scheme.
 From our point of view,
 harmonic maps in such a class should admit a loop group method.

 In this paper we develop a loop group theory for harmonic maps into 
 Lie groups which are equipped with a bi-invariant affine connection, 
 namely \textit{affine harmonic maps}.
 Instead of considering the Levi-Civita connection on Lie groups, 
 we will use a natural, torsion free, bi-invariant connection, called
 \textit{neutral connection},  to study maps from
 Riemann surfaces to Lie groups.
 We shall show that smooth maps into a general Lie group $G$ 
 equipped with the neutral connection admit a loop group formulation.
 Based on this fundamental result, we generalize the generalized 
 Weierstrass type representation to
 harmonic maps into Lie groups equipped with the neutral connection.
 We note that a Lie group $G$, equipped with the neutral connection, 
 is looked at as the affine symmetric space $G\times{G}/G$.
 If $G$ is semi-simple, any neutral connection
 coincides with the Levi-Civita connection of the
 bi-invariant metric induced by the Killing form.
 In this sense, the present paper is a generalization
 of the loop group method originally developed by Uhlenbeck and Segal.

 We would like to emphasize that we establish a
 loop group method for affine harmonic maps into
 \textit{any} Lie group, in particular also into
 Lie groups which do
 not have any bi-invariant metric.
 As a consequence, in a sense,  the particularly new feature
 of this paper is a treatment of harmonic maps into solvable Lie groups
 (and, as a special case, into  nilpotent Lie groups), since, generally, these groups  do not admit
 any bi-invariant metric.

 In this context we would like to mention that 
 harmonic maps into Lie groups, especially nilpotent or solvable 
 Lie groups, equipped with left-invariant affine 
 connections have applications to probability theory, since it is known 
 that harmonic maps have a probabilistic characterization:
 A smooth map between Riemannian manifolds is a harmonic map if and only if 
 it sends Brownian motions to martingales \cite{Kendall, Meyer}.  
 Martingales and harmonic maps into Lie groups equipped 
 with left-invariant affine connections have been studied in \cite{Arnaudon,Stelmastchuk}.

 This paper is organized as follows:
 In Section \ref{sc:Pre}, we will briefly give preliminary 
 results on vector bundle valued differential forms. 
 In Section  \ref{sc:affineharmonic}, 
 affine harmonic maps from a Riemannian manifold into an affine manifold will be 
 discussed. 
 In Section \ref{sc:affineLiegroup}, affine harmonic maps into any Lie group $G$ 
 will be considered. 
 It is known that 
 all left-invariant affine connections on a Lie group  $G$
 are given by bilinear maps $\mu$ on the Lie algebra of $G$, and are therefore denoted by  $\nm$.
 Then $\nm$-harmonic maps
 from Riemann surfaces 
 into $G$, where $G$ carries a left-invariant affine connection $\nm$ defined
 by a  skew-symmetric map $\mu$,  will be characterized by a loop of flat connections in Theorem 
 \ref{thm:familyofconnections}.
 In particular, we consider Lie groups with 
 left-invariant metrics
 and harmonic maps from Riemann surfaces into these Lie groups with the Levi-Civita connection. 
 We will give some simple and fundamental examples of 
 neutral harmonic maps in Section \ref{sc:BasicExamples}.
 In Section \ref{sc:DPW}, the generalized Weierstrass type representation for 
 neutral harmonic maps into Lie groups will be presented. 
 In this section, the Lie group  
 is a connected real analytic Lie group 
 admitting a faithful finite dimensional representation and 
 its linear complexification is simply-connected.
 In the final Section \ref{sc:Examples}, 
 as an example of the generalized Weierstrass type representation, we will discuss 
 the case of $3$-dimensional solvable Lie groups in detail. In Theorem 
 \ref{thm:repforsol}, 
 we give a representation formula for neutral harmonic maps into 
 $3$-dimensional solvable Lie groups. 
 The class of solvable Lie groups considered in Section \ref{sc:Examples} 
 includes the following model spaces of Thurston geometry:
 $\mathbb E^3$, $\mathbb H^3$, $\mathbb H^2 \times \mathbb R$ 
 and $\mathrm{Sol}_3$. In particular we introduce a new class 
 of surfaces in $\mathbb H^3$ which admit holomorphic 
 constructions.

\section{Preliminaries}\label{sc:Pre}
\subsection{Basic facts}
 Let $M$ be a manifold and $E$ a vector bundle over $M$ and 
 denote by $\varGamma(E)$ 
 the space of all smooth sections of the vector bundle $E$.
 The space $\varGamma(\wedge^{r}T^{*}M\otimes{E})$ is 
 denoted by $\Omega^{r}(E)$. An element of $\Omega^{r}(E)$ is called an
 $E$-\textit{valued} $r$-\textit{form} on $M$. 

 In case $E=M\times{V}$ is a trivial  vector bundle over $M$ with 
 standard fiber $V$, then $\Omega^{r}(M\times{V})$ is denoted by 
 $\Omega^{r}(M; V)$.
 An element of $\Omega^{r}(M; V)$ is called a $V$-\textit{valued} 
 $r$-\textit{form} on $M$.
 By definition, for $\alpha\in \Omega^{r}(M; V)$ 
 and $X_{1},X_{2},\cdots, X_{r}\in \varGamma(TM)$, 
 $\alpha(X_1,X_2,\cdots,X_r)\in C^{\infty}(M,V)$.

 Next let $G$ be a Lie group with Lie algebra $\mathfrak{g}$.
 Take a bilinear map $\mu:\mathfrak{g}\times \mathfrak{g}\to
 \mathfrak{g}$. 
 Then for $\alpha$, $\beta\in \Omega^{1}(M;\mathfrak{g})$, we define a
 $\mathfrak{g}$-valued $2$-form $\mu(\alpha\wedge \beta)$ by 
 \be
 \mu(\alpha\wedge \beta)(X,Y):=
 \mu(\alpha(X),\beta(Y))-\mu(\alpha(Y),\beta(X))
 \ee
 for any sections $X, Y  \in \varGamma(TM)$.
 Moreover the \textit{symmetric part} $\sym \mu$ and 
 the \textit{skew-symmetric part} $\sk \mu$ of $\mu$ are defined by
\begin{equation*}
(\sym \mu) (X,Y) := \tfrac{1}{2}\mu (X,Y) + \tfrac{1}{2}
  \mu (Y,X), \;\;
 (\sk \mu )(X,Y):= \tfrac{1}{2}\mu (X,Y)
   - \tfrac{1}{2} \mu(Y,X).
\end{equation*}
 for any sections $X, Y  \in \varGamma(TM)$.
 It is easy to check that the following relations hold 
 for any $\alpha$, $\beta \in \Omega^{1}(M;\mathfrak{g})$:
\begin{equation}\label{eq:sym-skew}
  (\sym \mu) (\beta \wedge \alpha) = -   (\sym \mu) (\alpha \wedge \beta), \;\;
  (\sk \mu) (\beta \wedge \alpha) =  (\sk \mu) (\alpha \wedge \beta).
\end{equation}
 Let us denote by $\theta$ the left-invariant Maurer-Cartan form on 
 a Lie group $G$. 
 By definition, $\theta$ is a $\mathfrak{g}$-valued $1$-form on $G$. 
 The $\mathfrak{g}$-valued $2$-form $[\theta \wedge \theta]$ 
 is computed as
 \be
 [\theta \wedge \theta](X,Y)=2[\theta(X),\theta(Y)]=2(\theta\wedge \theta)(X,Y).
 \ee
 The left Maurer-Cartan form $\theta$ satisfies the Maurer-Cartan 
 equation:
 \be
 d\theta+\frac{1}{2}[\theta\wedge \theta]=
 d\theta+\theta \wedge \theta=0.
 \ee

\section{Affine harmonic maps}
\label{sc:affineharmonic}
\subsection{Affine harmonic maps}
 Let $(M,g)$ be a Riemannian manifold and $(N,\nabla)$
 an \textit{affine manifold}, 
 that is, a manifold with an affine connection $\nabla$. 
 We denote by $\nabla^M$ and $T$ the Levi-Civita connection 
 of $(M,g)$ and the 
 torsion of the connection $\nabla$ on $N$, respectively.
 Let $\varphi:M\to N$ be a smooth map. 
Then 
 the connection $\nabla$
 induces a unique connection $\nabla^{\varphi}$ 
 on the pull-back tangent bundle $\varphi^{*}TN$ 
 which satisfies the condition
\be
\nabla^{\varphi}_{X}(V\circ{\varphi})=
(\nabla_{d\varphi(X)}V)\circ{\varphi}
\ee
 for any sections $X, Y  \in \varGamma(TM)$
 and $V\in \varGamma(TN)$, see \cite[p.~4]{EL}. 
 The differential $d\varphi$ is 
 naturally  interpreted as a $\varphi^{*}TN$-valued $1$-form. 
 
 The \textit{exterior covariant derivative} of $d\varphi$ is given by
\be
 d^{\nabla^\varphi}d\varphi(X,Y)=\varphi^{*}T(X,Y).
\ee
 Hence $d\varphi$ is a closed $\varphi^{*}TN$-valued $1$-form 
 if and only if $\varphi^{*}T=0$.
 The \textit{second fundamental form} $\nabla{d}\varphi$ 
 of $\varphi$ (with respect to $\nabla$) is 
 a section of $T^* M \otimes T^* M  \otimes \varphi^* TN$
 defined by
\begin{equation}
\nabla{d}\varphi (Y; X):=
(\nabla^{\varphi}_{X}d\varphi)Y
=\nabla^{\varphi}_{X}{d}\varphi(Y)-{d}\varphi(\nabla^{M}_{X}Y),
\ \
X,Y \in \varGamma (TM).
\end{equation}
 One can see that $\nabla d\varphi$ 
 is symmetric if and only if $\varphi^{*}T=0$.
 The \textit{tension field} $\tau(\varphi, \nabla)$ of $\varphi$ 
 is the  section of 
 $\varphi^{*}TN$ defined by 
\be
 \tau(\varphi, \nabla) := \trace (\nabla{d}\varphi).
\ee
 Here $\trace$ denotes the trace with respect to $g$.
 We now arrive at the following definition.
\begin{Definition}
\mbox{}
\begin{enumerate}
\item  A smooth map $\varphi:(M,g)\to (N,\nabla)$ from 
 a Riemannian manifold $(M,g)$
 into an affine manifold $(N,\nabla)$ is said to be an 
 \textit{affine harmonic map} or to be 
 $\nabla$-\textit{harmonic map}
 if 
 \be
 \tau(\varphi, \nabla)=0.
 \ee
 
\item Let us define an affine connection 
 ${}^\dag\nabla$ on an affine manifold $(N,\nabla)$ as follows:
 \be
 {}^\dag\nabla_{X}Y:=\nabla_{X}Y-\frac{1}{2}T(X,Y),\;\;
 X, Y \in \varGamma(TN).
 \ee
 Then ${}^\dag\nabla$ is torsion free and is called the 
 \textit{associated torsion free connection} of $\nabla$.
\end{enumerate}
\end{Definition}
 Since the torsion $T$ is skew-symmetric, it is easy to see 
 that the tension fields 
 $\tau(\varphi, \nabla)$ and 
 $\tau(\varphi, {}^{\dag}\nabla)$ are equal.
 Hence we have the following.
\begin{Proposition}\label{associatedtorsionfree}
 Let $\varphi:(M,g)\to (N,\nabla)$ be a smooth 
 map of a Riemannian manifold into an affine manifold. Then 
 $\varphi$ is $\nabla$-harmonic if and only if 
 it is ${}^\dag\nabla$-harmonic.
\end{Proposition}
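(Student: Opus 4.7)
The plan is to show that the two second fundamental forms $\nabla d\varphi$ and ${}^\dag\nabla d\varphi$ differ by a tensor that is skew-symmetric in its two $TM$-arguments; then tracing against the symmetric metric $g$ annihilates the difference, which gives equality of the tension fields and hence the equivalence of the two harmonicity conditions.

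First I would compare the two induced connections on $\varphi^{*}TN$. On $N$ itself the difference of the two connections is the tensor ${}^\dag\nabla_{X}Y - \nabla_{X}Y = -\tfrac{1}{2}T(X,Y)$, so by the defining property
\[
\nabla^{\varphi}_{X}(V\circ\varphi) = (\nabla_{d\varphi(X)}V)\circ\varphi,
\]
the same relation holds after pull-back, i.e.
\[
{}^\dag\nabla^{\varphi}_{X}(V\circ\varphi) - \nabla^{\varphi}_{X}(V\circ\varphi)
= -\tfrac{1}{2}\,T\bigl(d\varphi(X),V\bigr)\circ\varphi
\]
for every $V \in \varGamma(TN)$. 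Since the difference of two connections on $\varphi^{*}TN$ is a $C^{\infty}(M)$-linear tensor, this formula extends to arbitrary sections of $\varphi^{*}TN$, in particular to the section $d\varphi(Y)$.

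Next I would substitute into the definition of the second fundamental form. The Levi-Civita term $d\varphi(\nabla^{M}_{X}Y)$ is identical in both cases, so everything else cancels and one is left with
\[
{}^\dag\nabla d\varphi(Y;X) - \nabla d\varphi(Y;X)
= -\tfrac{1}{2}\,T\bigl(d\varphi(X),d\varphi(Y)\bigr)
= -\tfrac{1}{2}\,\varphi^{*}T(X,Y).
\]
Because the torsion $T$ is skew-symmetric in its two arguments, the right-hand side is skew-symmetric in $(X,Y)$. Taking $\trace$, which is the trace with respect to the symmetric metric $g$, kills any skew-symmetric $(0,2)$-part, so
\[
\tau(\varphi,{}^\dag\nabla) = \trace({}^\dag\nabla d\varphi) = \trace(\nabla d\varphi) = \tau(\varphi,\nabla),
\]
and the proposition follows.

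I do not anticipate a real obstacle here; the only place requiring a moment of care is the passage from the pull-back formula on sections of the form $V\circ\varphi$ to the section $d\varphi(Y)$, but this is automatic from the tensoriality of the difference of two connections and can be checked on a local frame if desired.
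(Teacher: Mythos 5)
Your proposal is correct and is precisely the argument the paper leaves implicit: the paper simply remarks that since $T$ is skew-symmetric the two tension fields agree, and your computation that ${}^\dag\nabla d\varphi - \nabla d\varphi = -\tfrac{1}{2}\varphi^{*}T$ is skew in $(X,Y)$ and hence vanishes under $\trace$ is exactly the detail being invoked. No gaps; the tensoriality point you flag at the end is handled correctly.
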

\begin{Remark}\label{rm:Harmonic}
\mbox{}
\begin{enumerate}
\item In case $\dim M=2$, the 
 harmonic map equation $\tau(\varphi, \nabla)=0$ is 
 invariant under conformal changes of the metric of $M$, 
 see \cite{EL}.
 Thus the affine-harmonicity makes 
 sense for maps from a Riemann surface.
 Thus if $M$ is orientable, then we can give $M$ 
 a complex structure such that the metric is isothermal and 
 it suffices to consider harmonic maps from this setting. 
 If $M$ is not orientable, we consider the double cover.
\item 
 If $(M,g)$ and $(N,h)$ are Riemannian manifolds and if $\nabla$ 
 denotes the Levi-Civita connection of $h$, 
 then the $\nabla$-harmonicity of $\varphi:M\to N$ coincides 
 with the notion of a harmonic map in the classical sense, 
 that is, it is a critical point of the \textit{energy functional}, 
 \cite{EL}: 
\be
 E(\varphi)=\int_{M}\frac{1}{2} |d\varphi|^{2}\>dv_g.
\ee
\end{enumerate}
\end{Remark}
\subsection{Torsion-free affine harmonic maps}
 More generally, we can introduce the notion of
 an affine $(1, 1)$-harmonic map on a complex manifold 
 in the following manner. 
 Let $(M,J)$ be a complex manifold and
 $\varphi:M\to (N,\nabla)$ a smooth map into an affine 
 manifold.
 With respect to the complex structure $J$, we decompose 
 the complexified tangent bundle $T^{\mathbb{C}}M$ into the 
 Whitney sum $T^{\mathbb C}M=T^{(1,0)}M\oplus T^{(0,1)}M$.
 By restricting the differential $d\varphi$ to 
 $T^{(1,0)}M$ and $T^{(0,1)}M$, we obtain vector bundle 
 morphisms 
 \be
 \partial \varphi:T^{(1,0)}M\to \varphi^{*}TN^{\mathbb C},
 \ \
 \bar{\partial}\varphi: T^{(0,1)}M\to \varphi^{*}TN^{\mathbb C}.
 \ee
 Then the $(0,1)$-covariant derivative 
 $\nabla^{\prime\prime}\partial \varphi$ of 
 $\partial \varphi$ (also called the \textit{Levi-form} of $\varphi$) 
 is defined by
 \be
 (\nabla^{\prime\prime}\partial \varphi)(Z;\overline{W}):=
 (\nabla^{\prime\prime}_{\overline W}\partial \varphi)Z=
 \nabla^{\varphi}_{\overline W}\partial\varphi(Z)-
 \partial\varphi(\bar{\partial}_{\overline W}Z),
 \ \ Z, W
 \in \varGamma(T^{(1,0)}M),
 \ee
 where $\bar \partial$ denotes the $\bar \partial$-operator on 
 $T^{(1, 0)} M$.
\begin{Definition}
 A smooth map $\varphi:(M,J)\to (N,\nabla)$ is said to be 
 an  \textit{affine $(1,1)$-harmonic} 
 if  $\nabla^{\prime\prime}\partial \varphi=0$.
\end{Definition}
 In case $M$ is a Riemann surface, then  affine 
 $(1,1)$-harmonicity
 is equivalent to  affine harmonicity under 
 the {\it torsion free condition}. More precisely, 
 the following relation holds (also see  Remark \ref{rm:torison-free}).
\begin{Proposition}[Theorem 5.1.1 in \cite{Khemar}]
\label{prop:stronglyKehmarcharact}
 Let $M$ be a Riemann surface and $\varphi:M\to (N,\nabla)$ a
 smooth map into an affine manifold. 
 Then $\varphi$ is affine $(1,1)$-harmonic if and only if 
 $\varphi$ is $\nabla$-harmonic and $\varphi^{*}T=0$.
\end{Proposition}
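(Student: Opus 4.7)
The strategy is to reduce both sides of the asserted equivalence to the vanishing of a single $\varphi^{*}TN^{\mathbb C}$-valued section by working in a local isothermal complex coordinate on $M$, and then to read off $\tau(\varphi,\nabla)$ and $\varphi^{*}T$ from the symmetric/skew decomposition of $\nabla d\varphi$ on the pair $(\partial_{z},\partial_{\bar z})$.

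First I would reduce to a local calculation. By Remark~\ref{rm:Harmonic}(1), the condition $\tau(\varphi,\nabla)=0$ depends only on the conformal class of a metric on $M$, while affine $(1,1)$-harmonicity and $\varphi^{*}T=0$ are defined without reference to a metric on $M$. I may therefore pick, in any complex chart $z$, an isothermal representative $g=e^{2u}|dz|^{2}$. In these coordinates $[\partial_{z},\partial_{\bar z}]=0$, $\nabla^{M}_{\partial_{\bar z}}\partial_{z}=\nabla^{M}_{\partial_{z}}\partial_{\bar z}=0$, and $\bar\partial_{\partial_{\bar z}}\partial_{z}=0$. Plugging these into the definitions produces the identity of $\varphi^{*}TN^{\mathbb C}$-valued sections
\be
(\nabla^{\prime\prime}\partial\varphi)(\partial_{z};\partial_{\bar z})
=\nabla^{\varphi}_{\partial_{\bar z}}d\varphi(\partial_{z})
=(\nabla d\varphi)(\partial_{z};\partial_{\bar z}),
\ee
and since $\nabla^{\prime\prime}\partial\varphi$ is tensorial and $\partial_{z}$ spans $T^{(1,0)}M$ locally, affine $(1,1)$-harmonicity is equivalent to the vanishing of this one section.

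Next I would match the symmetric and skew parts of $(\nabla d\varphi)(\partial_{z};\partial_{\bar z})$ with $\tau$ and $\varphi^{*}T$. The identity $d^{\nabla^{\varphi}}d\varphi=\varphi^{*}T$ recalled in Section~\ref{sc:affineharmonic}, together with $[\partial_{z},\partial_{\bar z}]=0$ and torsion-freeness of $\nabla^{M}$, yields
\be
(\nabla d\varphi)(\partial_{\bar z};\partial_{z})-(\nabla d\varphi)(\partial_{z};\partial_{\bar z})=\varphi^{*}T(\partial_{z},\partial_{\bar z}),
\ee
so the skew part of $\nabla d\varphi$ on $(\partial_{z},\partial_{\bar z})$ is precisely $\varphi^{*}T(\partial_{z},\partial_{\bar z})$. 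Since in isothermal coordinates the only nonvanishing components of $g^{-1}$ are $g^{z\bar z}=g^{\bar z z}=2e^{-2u}$, the tension field is a nonzero scalar multiple of the symmetric sum $(\nabla d\varphi)(\partial_{z};\partial_{\bar z})+(\nabla d\varphi)(\partial_{\bar z};\partial_{z})$. Finally, because $d\varphi$, $\nabla^{M}$ and $\nabla^{\varphi}$ are real and extended $\mathbb C$-linearly, conjugation gives
\be
\overline{(\nabla d\varphi)(\partial_{z};\partial_{\bar z})}=(\nabla d\varphi)(\partial_{\bar z};\partial_{z}).
\ee
Hence $(\nabla d\varphi)(\partial_{z};\partial_{\bar z})=0$ is equivalent to the simultaneous vanishing of its symmetric and skew parts, which in turn is equivalent to $\tau(\varphi,\nabla)=0$ and $\varphi^{*}T(\partial_{z},\partial_{\bar z})=0$. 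By tensoriality the latter amounts to the coordinate-free statement $\varphi^{*}T=0$, closing both implications.

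The hard part is essentially bookkeeping rather than analysis: one must keep the argument-order convention in $\nabla d\varphi(Y;X)=(\nabla^{\varphi}_{X}d\varphi)Y$ aligned with that of the Levi form $(\nabla^{\prime\prime}\partial\varphi)(Z;\overline{W})=(\nabla^{\prime\prime}_{\overline W}\partial\varphi)Z$, and verify that the pointwise identities established in one isothermal chart promote to the global statement, which is justified by the tensorial character of $\tau$, $\varphi^{*}T$ and $\nabla^{\prime\prime}\partial\varphi$.
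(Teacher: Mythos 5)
Your proposal is correct and follows essentially the same route as the paper's sketch: the paper evaluates $\nabla^{\prime\prime}\partial\varphi$ on $Z=e_1-\sqrt{-1}Je_1$ and identifies the real part with $\tau(\varphi,\nabla)$ and the imaginary part with $\varphi^{*}T(e_1,e_2)$, which is exactly your symmetric/skew split of $(\nabla d\varphi)(\partial_z;\partial_{\bar z})$ combined with the conjugation identity, just expressed in the complex frame $\{\partial_z,\partial_{\bar z}\}$ rather than a real orthonormal one. The bookkeeping you flag (argument order, tensoriality, vanishing of $\nabla^{M}_{\partial_{\bar z}}\partial_z$ in isothermal coordinates) all checks out.
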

\begin{proof}[Sketch of proof]
Take $(1,0)$ vector fields $Z$ and $W$.
Then $Z$ and $W$ are represented as
\be
Z=X-\sqrt{-1}JX, \ \ W=Y-\sqrt{-1}JY,\ \ 
X,Y\in \varGamma(TM).
\ee
The $(0,1)$-covariant derivative of 
$\partial\varphi$ is computed as
\begin{align*}
(\nabla^{\prime\prime}\partial \varphi)(Z;\overline{W})
=&
\{
(\nabla d\varphi)(X,Y)+(\nabla d\varphi)(JX, JY)
\}
\\
&
+ \sqrt{-1}
\{
(\nabla d\varphi)(X,JY)-(\nabla d\varphi)(JX, Y)
\}
\end{align*}
 This equation implies  
 $\varphi^{*}T=0$ and $\tau(\varphi)=0$. 
 More precisely, take 
 a Hermitian metric $g$ in the conformal class.
 Then for  a local orthonormal frame field $\{e_1,e_2=Je_1\}$
 we choose 
 $X=Y=e_1.$ Then the real part of the 
 equation $(\nabla^{\prime\prime}\partial \varphi)(Z;\overline{W})
 =0$ implies $\tau(\varphi , \nabla) = \trace (\nabla d \varphi)=0$. 
 The imaginary part implies 
 $\varphi^{*}T(e_1,e_2)=0$. Hence 
 $\varphi^{*}T=0$.
 \end{proof}
 Based on this characterization we give the following definition:
\begin{Definition}\label{def:stronglyharmonic}
 Let $\varphi:M\to (N,\nabla)$ be a smooth map of a Riemann surface 
 into an affine manifold. Then $\varphi$ is said to be 
 \textit{torsion-free affine harmonic} or
 \textit{torsion-free $\nabla$-harmonic}  if $\varphi$ is 
 $\nabla$-harmonic and $\varphi^{*}T=0$, that is, 
 it is an affine $(1,1)$-harmonic map on a Riemann surface. 
\end{Definition}
\begin{Remark}
\mbox{}
\begin{enumerate}
\item
 Torsion-free harmonic maps were first observed by 
 Burstall and Pedit \cite{BP}. 
 In \cite{Khemar},  torsion-free $\nabla$-harmonic maps were 
 called ``strongly harmonic maps''.

\item For a Riemann surface $M$ and a Riemannian manifold 
 $(N,h)$ with Levi-Civita connection $\nabla$ of $h$,
 the affine $(1, 1)$-harmonicity of $\varphi : M \to N$
 coincides with the harmonicity in the classical sense, 
 see \cite{EL} and (2) in Remark \ref{rm:Harmonic}.
 \end{enumerate}
\end{Remark}
\section{Affine harmonic maps into Lie groups}\label{sc:affineLiegroup}
 In this section we discuss affine harmonic maps 
 from a Riemann surface into Lie groups.
 Let $G$ be a connected real Lie group
 and denote by $\mathfrak{g}$ 
 the Lie algebra of $G$, that is, 
 the tangent space of $G$ at the unit element $\id \in G$. 
 Hereafter we restrict our attention to left-invariant affine 
 connections on $G$. 
\subsection{Left-invariant connections} 
 Take a bilinear map 
 $\mu:\mathfrak{g}\times \mathfrak{g}\to \mathfrak{g}$.
 Then we can define a left-invariant affine 
 connection $\nm$ on $G$ by its value at the unit element $\id \in G$ by
\be
\nm_{X}Y=\mu(X,Y), \ \ X,Y
\in \mathfrak{g}.
\ee
 From \cite{Nomizu} we know that all left-invariant affine connections are obtained in this 
 way.
\begin{Proposition}[\cite{Nomizu}]\label{allconnections}
 Let $m_\mathfrak{g}$ be the vector 
 space of all $\mathfrak{g}$-valued 
 bilinear maps on $\mathfrak{g}$ and 
 $a_G$ the affine space of all 
 left-invariant affine connections on $G$.
 Then the map
\be
 m_{\mathfrak g}\ni \mu\longmapsto \nm \in a_G 
\ee
 is a bijection between $m_{\mathfrak g}$ 
 and $a_G$. The torsion $T^\mu$ of $\nm$ 
 is given by
\begin{equation}\label{eq:torsionformu}
T^{\mu}(X,Y)=-[X,Y]+\mu(X,Y)-\mu(Y,X)
\end{equation}
 for all $X$, $Y\in \mathfrak{g}$.
\end{Proposition}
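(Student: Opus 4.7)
The plan is to exploit the global left-invariant parallelism on $G$ to reduce the classification of left-invariant connections to purely algebraic data on $\mathfrak{g}$. Fix a basis $X_1,\ldots,X_n$ of $\mathfrak{g}$, regarded simultaneously as a global frame of left-invariant vector fields on $G$. Given $\mu\in m_{\mathfrak{g}}$, I would define $\nabla^\mu$ first on left-invariant vector fields by $\nabla^\mu_{X_i}X_j:=\mu(X_i,X_j)$, interpreted as the left-invariant vector field associated to $\mu(X_i,X_j)\in\mathfrak{g}$, and then extend to arbitrary vector fields $X=\sum f^iX_i$, $Y=\sum g^jX_j$ by imposing $C^\infty(G)$-linearity in the first slot and the Leibniz rule in the second:
\begin{equation*}
\nabla^\mu_X Y \;=\; \sum_{i,j} f^i g^j\,\mu(X_i,X_j) \;+\; \sum_j X(g^j)\,X_j .
\end{equation*}
A short check shows this formula is independent of the choice of basis and satisfies the axioms of an affine connection. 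Left-invariance $(L_a)_*\nabla^\mu_XY=\nabla^\mu_{(L_a)_*X}(L_a)_*Y$ follows because the defining rule involves only the constant elements $\mu(X_i,X_j)\in\mathfrak{g}$, which are invariant under $(L_a)_*$.

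For the converse direction, given any left-invariant affine connection $\nabla$, I would define $\mu_\nabla:\mathfrak{g}\times\mathfrak{g}\to\mathfrak{g}$ by $\mu_\nabla(X,Y):=(\nabla_X Y)_{\mathrm{id}}$, where $X,Y$ on the right are the left-invariant extensions. $\mathbb{R}$-bilinearity is immediate from the connection axioms, and left-invariance of $\nabla$ forces $\nabla_X Y$ on left-invariant fields to coincide with the left-invariant field determined by $\mu_\nabla(X,Y)$. Applying the construction of the first paragraph to $\mu_\nabla$ then recovers $\nabla$ on the global left-invariant frame, hence everywhere by the tensoriality/Leibniz properties, giving $\nabla^{\mu_\nabla}=\nabla$. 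The reverse composition $\mu_{\nabla^\mu}=\mu$ is tautological from the definition. This establishes the bijection.

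For the torsion formula, I would simply evaluate $T^\mu(X,Y)=\nabla^\mu_XY-\nabla^\mu_YX-[X,Y]$ on left-invariant $X,Y\in\mathfrak{g}$: the right-hand side is $\mu(X,Y)-\mu(Y,X)-[X,Y]$, which is exactly \eqref{eq:torsionformu}. Since $T^\mu$ is a tensor and agrees with the right-hand side on a global left-invariant frame, the identity holds pointwise on all of $G$. The main obstacle is really just the bookkeeping in the first paragraph: verifying that the extension from the left-invariant frame to arbitrary vector fields is consistent (independent of the basis used and compatible with the Leibniz rule), so that $\nabla^\mu$ is genuinely a well-defined affine connection rather than merely a prescription on a distinguished frame. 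Once that consistency is in place, bijectivity and the torsion identity are essentially formal consequences.
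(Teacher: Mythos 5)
Your proposal is correct. The paper states Proposition \ref{allconnections} as a citation of Nomizu and gives no proof of its own; your argument --- defining $\nabla^{\mu}$ on the global left-invariant frame, extending by tensoriality in the first slot and the Leibniz rule in the second, checking basis-independence (which works because the change-of-frame matrix between two left-invariant frames is constant), recovering $\mu$ from a left-invariant $\nabla$ by evaluation at $\id$, and reading off the torsion on left-invariant fields where it is $\mu(X,Y)-\mu(Y,X)-[X,Y]$ --- is exactly the standard proof of that cited result.
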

\begin{Definition}\label{def:threeconnections}
 We define a one parameter family 
 $\{ {}^{(t)}\nabla \;| \;t\in \mathbb{R}\}$ of 
 bi-invariant connections  $\nt:=\nabla^{\mu(t)}$ by
 $\mu(t)={}^{(t)}\mu$, where
\begin{equation}\label{eq:familyconnection}
 {}^{(t)}\mu(X,Y):=\frac{1}{2}(1+t) [X, Y],\ \  X,Y \in \mathfrak{g}.
\end{equation}
 There are three particular connections in 
 the family $\{\nt\; |\;t \in \mathbb R\}$:
\begin{enumerate}
\item  The \textit{canonical connection}:  ${}^{(-1)}\nabla $ \hspace{2mm} defined by setting  $t =- 1$.

\item The \textit{anti-canonical connection}: ${}^{(1)}\nabla $\hspace{2mm} defined 
 by setting $t =1$. 

\item The \textit{neutral connection}: ${}^{(0)}\nabla $ \hspace{2mm}defined by setting  $t =0$.
\end{enumerate} 
\end{Definition}
\begin{Remark} 
\mbox{}
\begin{enumerate}
\item
 The canonical connection and the anti-canonical connection have been 
 discussed in \cite{KN2, Agricola, Khemar}.
 In \cite{KN2}, the connections $\can$, $\anti$ 
 and $\neutral$ have been called \textit{Cartan-Schouten's} 
 $(-)$-\textit{connection}, $(+)$-\textit{connection} and $(0)$-\textit{connection}, 
 respectively. 
\item
 The set of all \textit{bi-invariant} connections on $G$ 
 is parametrized by 
\be
m^{bi}_{\mathfrak g}=
\left\{ \mu \in m_{\mathfrak g}\  \vert \ 
\mu(\ad (g)X,\ad (g)Y)=\ad (g)
\mu(X,Y),\ \ \textrm{for any}\ X,Y\in \mathfrak{g}, 
\ g\in G\right\}.
\ee
 Laquer \cite{Laquer} proved that for any compact simple 
 Lie group $G$, the set 
 $m^{bi}_{\mathfrak g}$ is $1$-dimensional except 
 for the case $G=\mathrm{SU}_n$ with $n\geq 3$. 
 More precisely, for any compact Lie group $G$, except for $ G =\mathrm{SU}_n$ ($n\geq 3$), 
 the set $m^{bi}_{\mathfrak g}$ is given by 
\be
 m^{bi}_{\mathfrak g}=\{ {}^{(t)}\mu \  \vert \  t \in \mathbb R \},
\ee
 where ${}^{(t)}\mu$ is defined in \eqref{eq:familyconnection}.
 Thus the corresponding set of bi-invariant connections
 is given by $a^{bi}_{G}=\{{}^{(t)}\nabla\ \vert \ t\in \mathbb{R}\}$.
 In case $G=\mathrm{SU}_n$ with $n\geq 3$, 
 $m^{bi}_{\mathfrak{su}_n}$ 
 is $2$-dimensional. The set $m_{\mathfrak{su}_n}^{bi}$ is parametrized by
\be
 m_{\mathfrak{su}_n}^{bi}
=\{{}^{(t,s)}\mu\ \vert \ t,s\in \mathbb{R}\}
\ee
with
${}^{(t,s)}\mu(X,Y)=
\frac{1}{2}(1+t)[X,Y]
+\sqrt{-1}\; s\left(
(XY+YX)-\frac{2}{n}\mathrm{tr}\>(XY)\id
\right)$.
\end{enumerate}
\end{Remark}

 We note the following elementary, but important Lemma.
\begin{Lemma}\label{lem:skewconnection}
\mbox{}
\begin{enumerate}
\item The bi-invariant connection 
 $\nt$ is torsion-free if and only if it is the neutral connection $\neutral$.

\item Assume that $\mu$ is skew-symmetric, then the 
 torsion free connection ${}^\dag\nm$ 
 associated to $\nm$ coincides with $\neutral$. 

\item Let $M$ be a Riemann surface, $G$ a Lie group  and $\nabla^{\mu}$ a 
connection on $G$ with $\mu$ skew-symmetric. Then a map
$\varphi:M\to G$ is $\nabla^{\mu}$-harmonic if and only if it is 
$\neutral$-harmonic.
\end{enumerate}
\end{Lemma}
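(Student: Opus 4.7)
The plan is to handle the three parts in order, using Proposition~\ref{allconnections} (the torsion formula \eqref{eq:torsionformu}) and Proposition~\ref{associatedtorsionfree} as the main inputs; no part should require more than a direct computation.

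For part (1), I would substitute $\mu={}^{(t)}\mu$, i.e. $\mu(X,Y)=\tfrac{1}{2}(1+t)[X,Y]$, into the torsion formula \eqref{eq:torsionformu}. Since $[X,Y]$ is itself skew-symmetric, the difference $\mu(X,Y)-\mu(Y,X)$ equals $(1+t)[X,Y]$, so $T^{\mu(t)}(X,Y)=-[X,Y]+(1+t)[X,Y]=t[X,Y]$. This vanishes for all $X,Y\in\mathfrak g$ if and only if $t=0$, proving (1).

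For part (2), assume $\mu$ is skew-symmetric, so $\mu(X,Y)-\mu(Y,X)=2\mu(X,Y)$, and \eqref{eq:torsionformu} gives $T^{\mu}(X,Y)=-[X,Y]+2\mu(X,Y)$. Then by the definition of the associated torsion free connection,
\begin{equation*}
{}^{\dag}\nabla^{\mu}_{X}Y \;=\; \nabla^{\mu}_{X}Y-\tfrac{1}{2}T^{\mu}(X,Y)
\;=\;\mu(X,Y)-\tfrac{1}{2}\bigl(-[X,Y]+2\mu(X,Y)\bigr)\;=\;\tfrac{1}{2}[X,Y].
\end{equation*}
By \eqref{eq:familyconnection} with $t=0$, the right-hand side is exactly $\neutral_{X}Y$. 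Since $\mu$ is skew-symmetric, this identity extends by left-invariance (or equivalently $\mathfrak g$-bilinearity) to all left-invariant vector fields, and hence as connections on $G$, giving ${}^{\dag}\nabla^{\mu}=\neutral$.

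For part (3), I would simply chain the two previous facts with Proposition~\ref{associatedtorsionfree}: that proposition says $\varphi$ is $\nabla^{\mu}$-harmonic iff $\varphi$ is ${}^{\dag}\nabla^{\mu}$-harmonic, and part (2) identifies ${}^{\dag}\nabla^{\mu}$ with $\neutral$, so the two notions of harmonicity coincide. The only subtle point anywhere in the argument is bookkeeping the skew-symmetry in part~(2)---essentially checking that the offending $\mu(X,Y)$ terms cancel cleanly---which is the main (and very mild) obstacle; everything else is direct substitution.
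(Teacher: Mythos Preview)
Your proposal is correct and matches the paper's own proof essentially line for line: the same torsion computation $T^{{}^{(t)}\mu}(X,Y)=t[X,Y]$ for part~(1), the identical cancellation yielding ${}^{\dag}\nabla^{\mu}_X Y=\tfrac12[X,Y]$ for part~(2), and the appeal to Proposition~\ref{associatedtorsionfree} together with part~(2) for part~(3). There is nothing to add.
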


\begin{proof} 
 (1):  It is straightforward to verify that using \eqref{eq:torsionformu}
 the torsion of $\nt$ is given by $T ^{{}^{(t)}\mu} (X,Y) = t [X,Y]$.
 Thus the claim immediately follows.

 (2): Assume that $\mu$ is skew-symmetric, then
 $T^{\mu}(X,Y)=-[X,Y]+2\mu(X,Y)$
 and the associated connection ${}^\dag\nabla^\mu$
 is computed as
\be
{}^\dag\nabla^\mu_{X}Y=
 \nabla^\mu_{X}Y-\frac{1}{2}T^{\mu}(X,Y)
 = \mu(X,Y)-\frac{1}{2}
 \left(
 -[X,Y]+2\mu(X,Y)
 \right)=\frac{1}{2}[X,Y]
\ee
 for all $X$, $Y\in \mathfrak{g}$. Thus ${}^\dag \nm = \neutral$.

(3): Follows immediately from  Proposition \ref{associatedtorsionfree}.
\end{proof}

\subsection{Affine harmonic maps}
 Now let $\varphi:M\to G$ be a smooth map of a 
 Riemann surface $M$ into a Lie group $G$. 
 Then the pull-back $1$-form $\alpha:=\varphi^{*}\theta$ of the left Maurer-Cartan 
 form $\theta$ by $\varphi$ satisfies the \textit{Maurer-Cartan equation}:
\be
 d\alpha+\frac{1}{2}[\alpha \wedge \alpha]=0.
\ee
 With respect to the conformal structure of $M$, 
 we decompose $\alpha$ as 
 $\alpha=\alpha^{\prime}+\alpha^{\prime\prime}$. 
 Then the Maurer-Cartan equation is rephrased as
\begin{equation}\label{eq:MCforalpha}
 \bar{\partial}\alpha^{\prime}+\partial\alpha^{\prime\prime}+
 [\alpha^{\prime}\wedge \alpha^{\prime\prime}]=0.
\end{equation}
\begin{Remark}
 When $G$ is a linear Lie group, then $\alpha=\varphi^{*}\theta$ has the form
  $\alpha=\varphi^{-1}d\varphi$. Even if $G$ is not a 
 linear Lie group (\textit{e.g.}, the universal covering 
 $\widetilde{\mathrm{SL}_2\mathbb{R}}$ of $\mathrm{SL}_2\mathbb{R}$),
 we can represent $\varphi^{*}\theta$ as $\varphi^{-1}d\varphi$ by using 
 the tangent group structure of $G$, \cite{Kobayashi}.
\end{Remark}
 From now on we equip $G$ with a left-invariant connection $\nabla^\mu$ and 
 take a Hermitian metric $g$ in the conformal class of $M$. Then 
 the second fundamental form $\nm {d}\varphi$ with 
 respect to $\nabla^\mu$ is 
 related to the second fundamental 
 form $\can {d} \varphi$ with 
 respect to the 
 canonical connection by
\be
\theta((\nm_{X} d\varphi)Y)= 
\theta((\can_{X} {d}\varphi)Y)
+\mu(\alpha(X),\alpha(Y)),
\ee
 where $\alpha =\varphi^* \theta$.
 Hence the tension field $\tau(\varphi,\nm)$ 
 with respect to $\nabla^\mu$ is given by
\be
 \theta(\tau(\varphi,\nm))=\theta(\tau(\varphi,\can ))
 +\trace \mu(\alpha,\alpha).
\ee
\begin{Proposition}\label{prop:affine-harmonicity}
 Let $\varphi:M \to (G,\nm)$ be a 
 smooth map of a Riemann surface into 
 a Lie group and $\alpha= \varphi^{*}\theta=\alpha^{\prime} +
 \alpha^{\prime \prime}$ the Maurer-Cartan form. Then $\varphi$ 
 is $\nm$-harmonic if and only if $\alpha$ satisfies
\begin{equation}\label{eq:mu-harmonic}
\bar{\partial} \alpha^{\prime}- \partial \alpha^{\prime \prime}
+ 2 (\sym \mu) 
 (\alpha^{\prime \prime} \wedge \alpha^{\prime})=0.
\end{equation}
\end{Proposition}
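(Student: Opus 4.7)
The plan is to reduce the claim to a coordinate computation in an isothermal coordinate on $M$, using the tension-field decomposition
\[
\theta(\tau(\varphi,\nm)) \;=\; \theta(\tau(\varphi,\can)) + \trace \mu(\alpha,\alpha)
\]
stated just above the proposition, together with the key structural fact that left-invariant vector fields are parallel with respect to the canonical connection $\can = {}^{(-1)}\nabla$. This trivialization will let me read off $\theta(\tau(\varphi,\can))$ directly from $\alpha$.

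I would pick a local conformal coordinate $z=x+iy$ and a representative metric $g=2\rho\,dz\,d\bar z$ in the conformal class, noting that the mixed Christoffel symbols vanish and $g^{z\bar z}=1/\rho$. Writing $\alpha_z := \alpha(\partial_z)$, $\alpha_{\bar z} := \alpha(\partial_{\bar z})$, the fact that $\theta(\can^\varphi_X V) = X(\theta(V))$ in the left-invariant framing gives $\theta\bigl((\can^\varphi_X d\varphi)Y\bigr) = X(\alpha(Y)) - \alpha(\nabla^M_X Y)$, and tracing with $g$ yields
\[
\theta(\tau(\varphi,\can)) \;=\; \tfrac{1}{\rho}\bigl(\partial_z\alpha_{\bar z} + \partial_{\bar z}\alpha_z\bigr).
\]
Since $g^{ij}$ is symmetric, only the symmetric part of $\mu$ contributes to the second trace:
\[
\trace\mu(\alpha,\alpha) \;=\; \tfrac{2}{\rho}(\sym\mu)(\alpha_z,\alpha_{\bar z}).
\]
That $\sk\mu$ drops out is a nice sanity check against Lemma~\ref{lem:skewconnection}(3), where skew-symmetric $\mu$ reduces to the neutral connection.

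Finally I would evaluate the left-hand side of \eqref{eq:mu-harmonic} on $(\partial_z,\partial_{\bar z})$. A short computation gives $\bar\partial\alpha' - \partial\alpha'' = -(\partial_{\bar z}\alpha_z + \partial_z\alpha_{\bar z})\,dz\wedge d\bar z$, and using $\alpha'(\partial_{\bar z})=\alpha''(\partial_z)=0$ together with the symmetry of $\sym\mu$, one obtains $(\sym\mu)(\alpha''\wedge\alpha')(\partial_z,\partial_{\bar z}) = -(\sym\mu)(\alpha_z,\alpha_{\bar z})$. Combining the two preceding displays, the full expression in \eqref{eq:mu-harmonic} equals $-\rho\,\theta(\tau(\varphi,\nm))\,dz\wedge d\bar z$, and since $\rho\neq 0$ and $\theta$ is a pointwise linear isomorphism $T_gG\to\mathfrak{g}$, the equivalence follows.

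The main obstacle is careful sign and factor bookkeeping when moving between the coordinate form (evaluations on $(\partial_z,\partial_{\bar z})$) and the invariant form (a $\mathfrak g$-valued 2-form), in particular the minus sign arising from $d\bar z\wedge dz = -dz\wedge d\bar z$ in $\bar\partial\alpha'$ and the minus sign coming from the definition of $\mu(\alpha\wedge\beta)$ applied with reversed order of arguments. Once those are aligned, the proof is a direct calculation with no further conceptual ingredient.
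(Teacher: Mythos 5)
Your proposal is correct and follows essentially the same route as the paper: both split $\theta(\tau(\varphi,\nm))=\theta(\tau(\varphi,\can))+\trace\mu(\alpha,\alpha)$ and then compute each piece in a conformal coordinate, observing that only $\sym\mu$ survives the trace; the paper phrases the computation via the Hodge star ($\theta(\tau(\varphi,\can))=*d*\alpha$) while you evaluate the $2$-forms directly on $(\partial_z,\partial_{\bar z})$, and you additionally justify the $\can$-tension-field formula from the parallelism of left-invariant fields, which the paper takes as known. The sign and factor bookkeeping you describe checks out against the paper's version.
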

\begin{proof}

 Take a Hermitian metric $g = e^u dz d \bar z$ with 
 a conformal coordinate $z$ in the 
 conformal class of $M$. With respect to this metric $g$, 
 since $* dz = - \sqrt{-1} dz$, $* dz = \sqrt{-1} d\bar z$ 
 and $*(d z \wedge d \bar z) = - 2 \sqrt{-1} e^{-u}$, we obtain
\be
\theta(\tau_{g} (\varphi,\can))
= * d * \alpha=
-\sqrt{-1}* (\bar{\partial}\alpha^{\prime}
-\partial \alpha^{\prime\prime}).
\ee
 On the other hand, 
\be
\trace \mu(\alpha,\alpha)
=- 2\sqrt{-1}* (\sym \mu)(\alpha^{\prime \prime}
\wedge \alpha^{\prime}).
\ee
Hence we obtain
\be
\theta(\tau_g(\varphi,\nm))
= -
\sqrt{-1}*
\left\{
\bar{\partial}\alpha^{\prime}-
\partial\alpha^{\prime\prime}
+2(\sym \mu)(\alpha^{\prime \prime}
\wedge \alpha^{\prime})
\right\}.
\ee
Thus the claim follows.

\end{proof}

\begin{Corollary}\label{coro:affharmonic}
 Let $\varphi:M \to (G,\nm)$ be a 
 smooth map of a Riemann surface into 
 a Lie group and $\alpha = \varphi^{*}\theta$ 
 the Maurer-Cartan form of $\varphi$. 
 Then the following properties are mutually equivalent:
\begin{enumerate}
\item The map $\varphi$ is $\nm$-harmonic.
\item The Maurer-Cartan form $\alpha$ satisfies
$ d * \alpha +(\sym \mu)(*\alpha \wedge \alpha)=0$, 
or equivalently, for $\alpha =\alpha^{\prime} + \alpha^{\prime \prime}$,
\begin{equation}
\label{0-HME}
 \bar{\partial} \alpha^{\prime}- \partial\alpha^{\prime \prime} 
 + 2 (\sym \mu)(\alpha^{\prime \prime} \wedge \alpha^{\prime})=0.
\end{equation}
\item 
 The Maurer-Cartan form 
 $\alpha$ satisfies
 $d \alpha + d * \alpha +\frac{1}{2}[\alpha \wedge \alpha] + (\sym \mu)(*\alpha \wedge \alpha)=0$, or equivalently,
 for $\alpha = \alpha^{\prime} + \alpha^{\prime \prime}$,
\begin{equation}\label{0-pluri}
 2\bar{\partial} \alpha^{\prime}+
 [\alpha^{\prime}\wedge \alpha^{\prime \prime}]
 +2 (\sym \mu)(\alpha^{\prime \prime} \wedge \alpha^{\prime})=0.
\end{equation}
\end{enumerate}
\end{Corollary}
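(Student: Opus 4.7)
The plan is to obtain (1) $\Leftrightarrow$ (2) directly from Proposition \ref{prop:affine-harmonicity} and then to derive (2) $\Leftrightarrow$ (3) by combining (2) with the Maurer-Cartan equation \eqref{eq:MCforalpha}, which holds automatically for every smooth $\varphi$.

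For the first equivalence, I would note that the second displayed identity in (2), namely \eqref{0-HME}, is exactly what Proposition \ref{prop:affine-harmonicity} asserts; the task is only to verify that the coordinate-free form $d * \alpha + (\sym \mu)(* \alpha \wedge \alpha) = 0$ is the same equation in disguise. I would fix a conformal coordinate $z$ on $M$ and decompose $\alpha = \alpha^{\prime} + \alpha^{\prime\prime}$ by type. Using $* \alpha^{\prime} = - \sqrt{-1}\, \alpha^{\prime}$ and $* \alpha^{\prime\prime} = \sqrt{-1}\, \alpha^{\prime\prime}$, together with the vanishing of $(2,0)$- and $(0,2)$-forms on a Riemann surface, one gets $d * \alpha = - \sqrt{-1}(\bar \partial \alpha^{\prime} - \partial \alpha^{\prime\prime})$. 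The remaining term $(\sym \mu)(* \alpha \wedge \alpha)$ reduces by the skew identity \eqref{eq:sym-skew} to a scalar multiple of $(\sym \mu)(\alpha^{\prime\prime} \wedge \alpha^{\prime})$, and the constants line up with \eqref{0-HME} after clearing the factor of $\sqrt{-1}$.

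For the second equivalence, the key input is the type-decomposed Maurer-Cartan equation \eqref{eq:MCforalpha}, $\bar \partial \alpha^{\prime} + \partial \alpha^{\prime\prime} + [\alpha^{\prime} \wedge \alpha^{\prime\prime}] = 0$, which holds identically. Adding this identity to \eqref{0-HME} converts $-\partial \alpha^{\prime\prime}$ into $\bar \partial \alpha^{\prime} + [\alpha^{\prime} \wedge \alpha^{\prime\prime}]$ and yields \eqref{0-pluri}; subtracting reverses the process, so (2) $\Leftrightarrow$ (3). The coordinate-free identity in (3) is obtained in the same spirit by adding the unsplit Maurer-Cartan equation $d \alpha + \tfrac{1}{2} [\alpha \wedge \alpha] = 0$ to the equation in (2).

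I do not foresee any real obstacle: the whole argument is a type-decomposition exercise combined with a single use of the Maurer-Cartan identity. The one place where I would be careful is in tracking the sign and the factor of $2$ when rewriting $(\sym \mu)(* \alpha \wedge \alpha)$ in terms of $(\sym \mu)(\alpha^{\prime\prime} \wedge \alpha^{\prime})$, since this depends essentially on the skew-symmetry expressed in \eqref{eq:sym-skew}, and a sign slip there would immediately break the equivalence with \eqref{0-HME}.
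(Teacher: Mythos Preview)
Your proposal is correct and matches the paper's own proof essentially line for line: the paper simply cites Proposition~\ref{prop:affine-harmonicity} for $(1)\Leftrightarrow(2)$ and then adds/subtracts the Maurer-Cartan equation~\eqref{eq:MCforalpha} to pass between \eqref{0-HME} and \eqref{0-pluri}. Your additional remarks on verifying the coordinate-free versions via the Hodge star and the identity~\eqref{eq:sym-skew} are a useful supplement that the paper leaves implicit.
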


\begin{proof}
 The equivalence of $(1)$ and $(2)$ has been already shown.
 To show the equivalence of $(2)$ and $(3)$, we add the Maurer-Cartan equation 
 \eqref{eq:MCforalpha} and the harmonicity equation \eqref{0-HME}.
 Then the equation \eqref{0-pluri} follows. Conversely, we subtract  
 the  Maurer-Cartan equation \eqref{eq:MCforalpha} from  the equation \eqref{0-pluri}, 
 then the harmonicity equation \eqref{0-HME} follows.
\end{proof}
 Then the following fundamental theorem is obtained.
 \begin{Theorem}\label{IntegrabilityTheorem}
 Let $\nm$ be a left-invariant affine 
 connection on a Lie group $G$ determined by a bilinear map 
 $\mu$ and $\varphi:M\to G$ a $\nm$-harmonic map
 from a Riemann surface $M$ into $G$. Then 
 $\alpha=\varphi^{*}\theta=\alpha^{\prime}+ \alpha^{\prime \prime}$ 
 satisfies \eqref{0-pluri}.
 Conversely, let $\mathbb{D} \subset \mathbb C$ be a simply-connected 
 domain and $\alpha=\alpha^{\prime}+\alpha^{\prime \prime}$ a 
 $\mathfrak{g}$-valued $1$-form satisfying \eqref{0-pluri}.
 Then there exist a $\nm$-harmonic map
 $\varphi:\mathbb{D}\to G$ such that $\varphi^{*}\theta=\alpha$. 
\end{Theorem}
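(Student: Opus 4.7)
The plan is to handle the two directions separately, using Corollary \ref{coro:affharmonic} as the bridge. The forward direction is immediate: if $\varphi:M\to G$ is $\nm$-harmonic, then its Maurer-Cartan form $\alpha=\varphi^{*}\theta$ automatically satisfies \eqref{eq:MCforalpha} and, by Corollary \ref{coro:affharmonic}(3), also satisfies \eqref{0-pluri}.

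The converse is the substantive part. The task is to extract both the Maurer-Cartan equation and the harmonicity equation \eqref{0-HME} from the single equation \eqref{0-pluri}, since only then can one integrate $\alpha$ on the simply-connected domain $\mathbb{D}$. The key observation is that \eqref{0-pluri} is essentially one complex equation whose real and imaginary parts, for a $\mathfrak{g}$-valued (hence \emph{real}) $\alpha$, give the two target equations separately. I will implement this via complex conjugation: since $\overline{\alpha'}=\alpha''$ we have $\overline{\bar\partial\alpha'}=\partial\alpha''$; the quantity $[\alpha'\wedge\alpha'']=\tfrac{1}{2}[\alpha\wedge\alpha]$ is $\mathfrak{g}$-valued; and $(\sym\mu)(\alpha''\wedge\alpha')$ is purely imaginary, the last by reality of $\mu$ together with $\overline{\alpha'}=\alpha''$ and \eqref{eq:sym-skew}. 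Using in addition the symmetry $[\alpha'\wedge\alpha'']=[\alpha''\wedge\alpha']$ for the Lie-bracket wedge of $1$-forms and the skew-symmetry $(\sym\mu)(\alpha'\wedge\alpha'')=-(\sym\mu)(\alpha''\wedge\alpha')$ from \eqref{eq:sym-skew}, complex-conjugating \eqref{0-pluri} produces the companion equation
\begin{equation*}
 2\partial\alpha''+[\alpha'\wedge\alpha'']-2(\sym\mu)(\alpha''\wedge\alpha')=0.
\end{equation*}
Adding to \eqref{0-pluri} and dividing by two recovers the Maurer-Cartan equation $\bar\partial\alpha'+\partial\alpha''+[\alpha'\wedge\alpha'']=0$ (which is the only nontrivial bidegree component on a Riemann surface), while subtracting recovers \eqref{0-HME}.

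With both equations available, the final step is standard: simply-connectedness of $\mathbb{D}$ together with the Maurer-Cartan equation allows integration of $\alpha$ by the fundamental existence theorem for $\mathfrak{g}$-valued Maurer-Cartan forms, producing a smooth $\varphi:\mathbb{D}\to G$, unique up to left translation, with $\varphi^{*}\theta=\alpha$. Proposition \ref{prop:affine-harmonicity} then identifies this $\varphi$ as $\nm$-harmonic. I expect the main obstacle to lie in the complex-conjugation bookkeeping of the middle paragraph, in particular in correctly identifying the reality type of $(\sym\mu)(\alpha''\wedge\alpha')$ and in exploiting \eqref{eq:sym-skew} together with the symmetry of $[\,\cdot\wedge\cdot\,]$ to split \eqref{0-pluri} into its two components; once these algebraic identities are in place the remainder is formal.
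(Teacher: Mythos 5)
Your proposal is correct and takes essentially the same route as the paper: the paper's proof likewise recovers the Maurer--Cartan equation \eqref{eq:MCforalpha} and the harmonicity equation \eqref{0-HME} by adding and subtracting the complex conjugate of \eqref{0-pluri} to itself (your conjugation bookkeeping, using $\overline{\alpha'}=\alpha''$, the symmetry of $[\,\cdot\wedge\cdot\,]$ and \eqref{eq:sym-skew}, is exactly right), and then integrates $\alpha$ on the simply-connected domain. Your write-up is in fact slightly more explicit than the paper's about the reality types of the individual terms and about the final integration step.
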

\begin{proof}
We only need to prove the converse statement.
 Let $\alpha = \alpha^{\prime}+\alpha^{\prime \prime}$ be a
  $\mathfrak g$-valued $1$-form on $\mathbb{D}$ satisfying \eqref{0-pluri}. 
 Then subtraction and addition of the complex conjugate 
 of \eqref{0-pluri} to itself
 gives the integrability condition \eqref{eq:MCforalpha} 
 and the harmonicity condition \eqref{0-HME}, respectively.
\end{proof}
\subsection{Torsion-free affine harmonic maps}
 In this subsection, we compute the \textit{torsion-free} $\nm$-harmonicity
 condition for a smooth map $\varphi:M\to (G,\nm)$ in terms of 
 the Maurer-Cartan form $\alpha=\varphi^{*}\theta$.
 Moreover, we relate  $\nm$-harmonicity 
 to torsion-free $\nm$-harmonicity.

 By  Proposition  \ref{allconnections}, the torsion $T^{\mu}$ along 
 $\varphi$ is given by 
\begin{equation}\label{eq:Torsion}
 \varphi^* T^{\mu}(\alpha^{\prime} \wedge \alpha^{\prime \prime})
 =-[\alpha^{\prime} \wedge \alpha^{\prime \prime}] + 
 2 (\sk \mu )(\alpha^{\prime} \wedge \alpha^{\prime \prime}).
\end{equation}
 Then we have the following theorem.
\begin{Theorem}\label{thm:stronglyharmonic}
 Let  $\varphi:M\to (G,\nm)$ be a smooth map.
 Then the following statements are equivalent$:$
\begin{enumerate}
 \item $\varphi$ is a torsion free  $\nm$-harmonic map.

 \item The Maurer-Cartan form $\alpha = \varphi ^* \theta$ satisfies
 \begin{align}
 (\sk \mu)(\alpha^\prime\wedge \alpha^{\prime\prime})
 -\frac{1}{2}[\alpha^\prime\wedge \alpha^{\prime\prime}]&=0, \label{T}\\
 \bar{\partial} \alpha^{\prime}+
 \mu(\alpha^{\prime\prime}\wedge \alpha^{\prime})&=0. \label{eq:stronglymuharmonic}
\end{align}
\end{enumerate}
\end{Theorem}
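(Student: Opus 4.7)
The plan is to unpack the definition of torsion-free $\nabla^\mu$-harmonicity (Definition \ref{def:stronglyharmonic}) as the conjunction of two conditions, translate each into Maurer-Cartan language using results already established in the paper, and then combine them algebraically using the decomposition $\mu = \sym\mu + \sk\mu$ together with the symmetry identities \eqref{eq:sym-skew}.

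First I would handle the torsion-free condition $\varphi^* T^\mu = 0$. By the formula \eqref{eq:Torsion} for the pull-back torsion, this condition on a Riemann surface (where $\alpha = \alpha' + \alpha''$ and the only nontrivial wedge term is between $\alpha'$ and $\alpha''$) is literally equivalent to
\[
(\sk\mu)(\alpha'\wedge \alpha'') - \tfrac{1}{2}[\alpha'\wedge \alpha''] = 0,
\]
which is \eqref{T}. So \eqref{T} is nothing more than a restatement of $\varphi^* T^\mu = 0$, independently of harmonicity.

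Next I would treat the $\nabla^\mu$-harmonicity. By Corollary \ref{coro:affharmonic}(3), this is equivalent to
\[
2\bar\partial \alpha' + [\alpha'\wedge\alpha''] + 2(\sym\mu)(\alpha''\wedge\alpha') = 0.
\]
Now, under the additional assumption \eqref{T}, I would rewrite $[\alpha'\wedge\alpha'']$ using \eqref{T} as $2(\sk\mu)(\alpha'\wedge\alpha'')$, and then apply the second identity of \eqref{eq:sym-skew} to replace $(\sk\mu)(\alpha'\wedge\alpha'')$ by $(\sk\mu)(\alpha''\wedge\alpha')$. The harmonicity equation then becomes
\[
2\bar\partial \alpha' + 2(\sk\mu)(\alpha''\wedge\alpha') + 2(\sym\mu)(\alpha''\wedge\alpha') = 0,
\]
and since $\sym\mu + \sk\mu = \mu$, this collapses to \eqref{eq:stronglymuharmonic}. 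Conversely, starting from \eqref{eq:stronglymuharmonic} and \eqref{T}, the same manipulation run in reverse recovers the harmonicity equation of Corollary \ref{coro:affharmonic}(3). This gives (1) $\Leftrightarrow$ (2).

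I do not expect any real obstacle: once the torsion identity \eqref{eq:Torsion}, the harmonicity characterization of Corollary \ref{coro:affharmonic}, and the $(1,0)$/$(0,1)$ bookkeeping via \eqref{eq:sym-skew} are in place, the argument is a few lines of symmetric/skew decomposition. The only point requiring a bit of care is the direction of the wedge in $(\sk\mu)(\alpha'\wedge\alpha'')$ versus $(\sk\mu)(\alpha''\wedge\alpha')$; invoking \eqref{eq:sym-skew} explicitly at that step keeps signs consistent and makes the equivalence transparent.
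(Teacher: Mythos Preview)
Your proposal is correct and follows essentially the same route as the paper: both identify \eqref{T} with the vanishing of the pulled-back torsion via \eqref{eq:Torsion}, and both pass between \eqref{eq:stronglymuharmonic} and the harmonicity equation \eqref{0-pluri} by decomposing $\mu=\sym\mu+\sk\mu$ and using the identities \eqref{eq:sym-skew}. The only cosmetic difference is that for $(2)\Rightarrow(1)$ the paper first expands $\mu(\alpha''\wedge\alpha')$ into its symmetric and skew parts and then invokes \eqref{T}, whereas you substitute \eqref{T} into \eqref{0-pluri} first; the algebra is identical.
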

\begin{proof} 

 $(1) \Rightarrow (2)$:
 Since the torsion $T^{\mu}$ 
 in \eqref{eq:Torsion} is assumed to  vanish, we obtain \eqref{T}.
 Moreover, since $(\sk \mu) (\alpha^{\prime} \wedge \alpha^{\prime \prime}) = 
 (\sk \mu) (\alpha^{\prime \prime} \wedge \alpha^{\prime})$ and 
 the $\nm$-harmonicity is characterized by the equation 
 \eqref{0-pluri}, we conclude the equation \eqref{eq:stronglymuharmonic}, 
 proving $(2)$.

 $(2) \Rightarrow (1)$: Starting from \eqref{eq:stronglymuharmonic} 
 we can rephrase it in the form  
 \begin{equation} \label{**}
 \bar \partial \alpha^{\prime} 
 - (\sym \mu)(\alpha^{\prime} \wedge \alpha^{\prime \prime}) + 
 (\sk \mu )(\alpha^{\prime} \wedge \alpha^{\prime \prime}) =0.
 \end{equation}
 Comparing this equation to \eqref{T}, we obtain the $\nm$-harmonicity equation 
 \eqref{0-pluri}. Moreover the equation \eqref{T} is just the torsion-free condition, 
 whence $(1)$.

\end{proof}
\begin{Remark}\label{rm:torison-free}
 Theorem \ref{thm:stronglyharmonic} 
 can be obtained by computing the Levi-form of the 
 map $\varphi$. From \cite{BR}, 
 the canonical connection and the left 
 Maurer-Cartan form satisfy the following relation:
\be
 \theta (\can_X Y) = X \theta(Y)- [ \theta (X), \theta (Y)].
\ee
 Since $\nm = \can + \mu$, we have
 $\theta (\nm_X Y) = X \theta(Y)- [ \theta (X), \theta (Y)] + \mu (\theta (X), 
 \theta (Y))$.
 Setting $\varphi^* \theta = \alpha$, we have   
\begin{align*}
 \theta \left({\nabla^{ \mu}}^{\prime \prime}
 \partial \varphi \right) & = - \sqrt{-1} * \left\{
 \bar \partial \alpha^{\prime}
 - [\alpha^{\prime \prime} \wedge \alpha^{\prime}] 
 + \mu (\alpha^{\prime \prime} \wedge \alpha^{\prime})\right\}. 
\end{align*}
 Thus ${\nabla^{ \mu}}^{\prime \prime}
 \partial \varphi =0$ if and only if 
 \begin{equation}\label{eq:torsion-free}
 \bar \partial \alpha^{\prime}
 - [\alpha^{\prime \prime} \wedge \alpha^{\prime}] 
 + \mu (\alpha^{\prime \prime} \wedge \alpha^{\prime}) =0.
 \end{equation}
 Adding and subtracting the complex conjugate of \eqref{eq:torsion-free} to itself, 
 we observe that 
 \eqref{eq:torsion-free} is equivalent to the two equations
 \begin{align*}
  2 (\sk \mu) (\alpha^{\prime} \wedge \alpha^{\prime \prime}) - 
 [\alpha^{\prime} \wedge \alpha^{\prime \prime}] =0,\\ 
  \bar \partial \alpha^{\prime} -
  \partial \alpha^{\prime \prime} 
 + 2 (\sym \mu) (\alpha^{\prime \prime} \wedge \alpha^{\prime}) =0,
 \end{align*}
 which are equivalent with the conditions \eqref{T} and 
\eqref{eq:stronglymuharmonic}.
 We note that we have used here $\mu = \sym \mu+ \sk \mu$,  
 $(\sym \mu) (\alpha^{\prime \prime} \wedge \alpha^{\prime})  = -
  (\sym \mu) (\alpha^{\prime} \wedge \alpha^{\prime \prime})$,
 $(\sk \mu) (\alpha^{\prime \prime} \wedge \alpha^{\prime})  = 
  (\sk \mu) (\alpha^{\prime} \wedge \alpha^{\prime \prime})$ 
 and $[\alpha^{\prime \prime} \wedge \alpha^{\prime}] =
 [\alpha^{\prime} \wedge \alpha^{\prime \prime}]$.
\end{Remark}
 Among the three connections $\can, \anti$ and $\neutral$ in 
 Definition \ref{def:threeconnections}, 
 the $\neutral$ connection automatically 
 satisfies the conditions of the theorem above. Thus we have the following.
\begin{Corollary}
 A smooth map $\varphi:M\to(G, \neutral)$ 
 is torsion-free $\neutral$-harmonic if and only if 
 it is $\neutral$-harmonic.
\end{Corollary}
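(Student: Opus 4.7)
The plan is to reduce the corollary to a direct comparison of the two characterizations already established: Theorem \ref{thm:stronglyharmonic} for torsion-free $\nm$-harmonicity, and Corollary \ref{coro:affharmonic} (equation \eqref{0-pluri}) for $\nm$-harmonicity. The essential observation is that the structural tensor of the neutral connection, $\mu(X,Y)=\tfrac{1}{2}[X,Y]$, is skew-symmetric.

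First I would record the decomposition $\mu=\sym\mu+\sk\mu$ for this $\mu$: since $\mu$ is skew-symmetric, $\sym\mu=0$ and $\sk\mu=\mu$. Then I would plug this into the two conditions of Theorem \ref{thm:stronglyharmonic}. The torsion-vanishing condition \eqref{T} becomes
\begin{equation*}
(\sk\mu)(\alpha'\wedge\alpha'')-\tfrac{1}{2}[\alpha'\wedge\alpha'']
=\tfrac{1}{2}[\alpha'\wedge\alpha'']-\tfrac{1}{2}[\alpha'\wedge\alpha'']=0,
\end{equation*}
so \eqref{T} is automatically satisfied (this is consistent with Lemma \ref{lem:skewconnection}(1), which already tells us $\neutral$ is torsion-free). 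Hence torsion-free $\neutral$-harmonicity is equivalent to the single remaining equation \eqref{eq:stronglymuharmonic}, namely $\bar\partial\alpha'+\tfrac{1}{2}[\alpha''\wedge\alpha']=0$.

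Next I would compare this with $\neutral$-harmonicity. Equation \eqref{0-pluri} with $\sym\mu=0$ reduces to
\begin{equation*}
2\bar\partial\alpha'+[\alpha'\wedge\alpha'']=0.
\end{equation*}
Using the identity $[\alpha''\wedge\alpha']=[\alpha'\wedge\alpha'']$ (recalled in Remark \ref{rm:torison-free}), this is the same equation as \eqref{eq:stronglymuharmonic} for our $\mu$. Therefore the two notions coincide.

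There is really no obstacle here: once the skew-symmetry of $\mu$ is invoked, both the torsion condition collapses trivially and the harmonicity equations align. The only point to handle carefully is the sign/wedge bookkeeping between $[\alpha'\wedge\alpha'']$ and $[\alpha''\wedge\alpha']$, which is controlled by \eqref{eq:sym-skew} and the skew-symmetry of $[\cdot,\cdot]$.
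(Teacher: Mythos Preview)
Your proof is correct and follows essentially the same route as the paper: the key point is that for $\neutral$ the bilinear map $\mu(X,Y)=\tfrac{1}{2}[X,Y]$ is skew-symmetric, so the torsion condition \eqref{T} is automatic and the remaining equation \eqref{eq:stronglymuharmonic} coincides with \eqref{0-pluri}. The paper states this in one line (invoking Lemma~\ref{lem:skewconnection}(1) implicitly), whereas you spell out the equation-level verification; both arguments are the same in substance.
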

\subsection{Zero-curvature representations for admissible affine harmonic maps}
 In this subsection, we introduce {\it admissible} $\nm$-harmonic 
 maps and show that admissible $\nm$-harmonic maps admit a zero-curvature representation.
 As a corollary we obtain immediately the important fact that all $\nabla^{\mu}$-harmonic 
 maps admit a zero-curvature representation if $\mu$ is skew-symmetric.
 This applies in particular to the connections ${}^{(t)}\nabla$.
 We first introduce the following definition.
\begin{Definition}
 A $\nm$-harmonic map $\varphi$ from a Riemann surface $M$ 
 into a Lie group $G$ with the Maurer-Cartan form $\alpha = \varphi^* \theta =
 \alpha^{\prime} + \alpha^{\prime \prime}$ satisfying the 
 condition 
 \begin{equation}\label{eq:admissible}
 (\sym \mu)(*\alpha \wedge \alpha)=0,
\;\;\mbox{or equivalently,} \;\;
 (\sym \mu)(\alpha^{\prime} \wedge \alpha^{\prime \prime})=0,
 \end{equation}
 is called an {\it admissible $\nm$-harmonic map}.
\end{Definition}
\begin{Remark}\label{Rm:admissible}
 If $\mu$ is skew-symmetric, then \eqref{eq:admissible} is automatically satisfied. 
 This applies, in particular, to the connections  $\nt, \; (t\in \mathbb R)$.
 Thus a smooth map $\varphi : M \to (G, \nt)$ is admissible $\nt$-harmonic if and 
 only if it is $\nt$-harmonic.
\end{Remark}
 We now characterize an admissible $\nm$-harmonic map into a Lie group 
 in terms of a family of flat connections.
\begin{Proposition}\label{prop:admissibleharmonic}
 Let $\varphi:M\to (G, \nm)$ be an admissible $\nm$-harmonic map into a 
 Lie group $G$ equipped with a left-invariant connection $\nabla^\mu$
 and $\alpha = \varphi^{*}\theta = \alpha^{\prime}+
 \alpha^{\prime \prime}$ the Maurer-Cartan form of $\varphi$.
 Then the loop of connections $d+\alpha_{\lambda}$ defined by
\begin{equation}\label{alpha-family}
 \alpha_{\lambda}:=
\frac{1}{2}(1-\lambda^{-1})\alpha^{\prime} 
 +\frac{1}{2}(1-\lambda)\alpha^{\prime\prime}
\end{equation}
 is flat for all $\lambda \in \mathbb S^1$. 

 Conversely assume that $\mathbb{D} \subset \mathbb C$ is simply-connected
 and let $\alpha_{\lambda}=
 \frac{1}{2}(1-\lambda^{-1})\alpha^{\prime}+
 \frac{1}{2}(1-\lambda)\alpha^{\prime \prime}$ 
 be an $\mathbb{S}^1$-family of 
 $\mathfrak{g}$-valued $1$-forms satisfying 
 the condition \eqref{eq:admissible} and 
 the zero-curvature condition{\rm:}
\be
 d\alpha_{\lambda}+\frac{1}{2}[\alpha_\lambda\wedge \alpha_\lambda]=0
\ee
 for all $\lambda \in \mathbb{S}^1$.
 Then there exists a $1$-parameter family of maps $F_\lambda:
 \mathbb{D}\times \mathbb{S}^1\to G$ such that 
 $F_{\lambda}^{*}\theta= \alpha_{\lambda}$. 
 The map $F_{\lambda}$ is called 
 the \textrm{extended solution associated with $\alpha$ and it is}
 \textrm{admissible} $\nm$-harmonic for $\lambda=\pm 1$. 
\end{Proposition}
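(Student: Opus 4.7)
The plan is to compute the zero-curvature expression $d\alpha_\lambda + \tfrac{1}{2}[\alpha_\lambda \wedge \alpha_\lambda]$ directly and regroup it by powers of $\lambda$. Writing $a = \tfrac{1}{2}(1-\lambda^{-1})$ and $b = \tfrac{1}{2}(1-\lambda)$, and using that $M$ is a Riemann surface---so that $\partial\alpha' = \bar\partial\alpha'' = 0$, $[\alpha' \wedge \alpha'] = [\alpha'' \wedge \alpha''] = 0$, and $[\alpha' \wedge \alpha''] = [\alpha'' \wedge \alpha']$---one obtains
\begin{equation*}
 d\alpha_\lambda + \tfrac{1}{2}[\alpha_\lambda \wedge \alpha_\lambda]
 = a\,\bar\partial\alpha' + b\,\partial\alpha'' + ab\,[\alpha' \wedge \alpha''].
\end{equation*}
This is a Laurent polynomial in $\lambda$ with nonzero coefficients only at $\lambda^{-1}, \lambda^0, \lambda$, and collecting terms shows that it vanishes for every $\lambda \in \mathbb{S}^1$ if and only if
\begin{equation*}
 \bar\partial\alpha' + \partial\alpha'' + [\alpha' \wedge \alpha''] = 0
 \quad\text{and}\quad
 2\bar\partial\alpha' + [\alpha' \wedge \alpha''] = 0.
\end{equation*}
The first equation is the Maurer--Cartan equation \eqref{eq:MCforalpha}, and, in view of the admissibility condition \eqref{eq:admissible} combined with $(\sym\mu)(\alpha'' \wedge \alpha') = -(\sym\mu)(\alpha' \wedge \alpha'') = 0$ from \eqref{eq:sym-skew}, the second is precisely \eqref{0-pluri}.

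For the forward implication, admissible $\nm$-harmonicity of $\varphi$ supplies both displayed equations, hence $d\alpha_\lambda + \tfrac{1}{2}[\alpha_\lambda \wedge \alpha_\lambda] = 0$ for every $\lambda \in \mathbb{S}^1$. For the converse, the zero-curvature of $\alpha_\lambda$ on the simply-connected domain $\mathbb{D}$ is exactly the Maurer--Cartan integrability condition for $\alpha_\lambda$, so standard integration yields, for each $\lambda$, a map $F_\lambda : \mathbb{D} \to G$ with $F_\lambda^*\theta = \alpha_\lambda$, unique up to left translation. At $\lambda = -1$ one has $\alpha_{-1} = \alpha' + \alpha''$, and by the analysis above together with the assumed \eqref{eq:admissible}, the map $F_{-1}$ satisfies \eqref{0-pluri}, hence is admissible $\nm$-harmonic. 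At $\lambda = 1$, $\alpha_1 \equiv 0$, so $F_1$ is constant and therefore trivially admissible $\nm$-harmonic.

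The calculation itself is routine; the one conceptual point is that the admissibility hypothesis \eqref{eq:admissible} is precisely what is required to identify the surviving coefficient of the curvature polynomial with the harmonicity equation \eqref{0-pluri}. Without it, the extra $(\sym\mu)(\alpha'' \wedge \alpha')$-term in \eqref{0-pluri} would obstruct the correspondence, which is why the proposition is restricted to admissible harmonic maps---a restriction that, by Remark \ref{Rm:admissible}, is automatic for the connections $\nt$.
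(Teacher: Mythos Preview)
Your proof is correct and follows essentially the same route as the paper: a direct expansion of $d\alpha_\lambda+\tfrac12[\alpha_\lambda\wedge\alpha_\lambda]$ as a Laurent polynomial in $\lambda$, followed by matching coefficients to recover the Maurer--Cartan equation and the harmonicity equation \eqref{0-pluri} (the latter via the admissibility hypothesis). The paper writes out the three coefficients at $\lambda^0,\lambda^{-1},\lambda$ separately before observing the dependence, whereas you pass directly to the two independent conditions; otherwise the arguments are the same.
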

\begin{proof}
 Let $\varphi:\mathbb{D}\to G$ be a smooth map and define the 
 loop $\{d+\alpha_\lambda\}$ of connections by \eqref{alpha-family}. 
 Then a direct computation shows that $2 d\alpha_{\lambda}+[\alpha_{\lambda}\wedge \alpha_{\lambda}]$
 is equal to 
\be
 \bar{\partial}\alpha^{\prime}+\partial\alpha^{\prime
\prime}+[\alpha^{\prime}\wedge \alpha^{\prime\prime}]
-\frac{\lambda^{-1}}{2}\left(2 \bar{\partial}\alpha^{\prime}+
[\alpha^{\prime}\wedge \alpha^{\prime\prime}]\right)
-\frac{\lambda}{2} 
\left(2 \partial\alpha^{\prime\prime}+
[\alpha^{\prime\prime}\wedge \alpha^{\prime}]\right).
\ee
 Assume that $\varphi$ is a $\nm$-harmonic map with condition \eqref{eq:admissible}, 
 then we have the Maurer-Cartan equation \eqref{eq:MCforalpha} 
 and the harmonicity equation \eqref{0-HME}.
 Hence $d+\alpha_{\lambda}$ is flat for all $\lambda \in \mathbb S^1$.

 Conversely, let $\alpha_{\lambda}$ be an 
 $\mathbb{S}^1$-family of $\mathfrak{g}$-valued
 $1$-forms of the form \eqref{alpha-family} defined on a 
 simply-connected $\mathbb{D}$. 
 Assume that $d+\alpha_{\lambda}$ is 
 flat for all $\lambda \in \mathbb{S}^1$. 
 Then there exists a $1$-parameter family of maps
 $F_{\lambda}:\mathbb{D}\times \mathbb{S}^1\to G$ such that 
 $F_{\lambda}^{*}\theta=\alpha_\lambda$. 
 The flatness of all $d+\alpha_\lambda$ implies
\be
 \bar{\partial}\alpha^{\prime}+\partial\alpha^{\prime \prime} 
 +[\alpha^{\prime} \wedge \alpha^{\prime \prime}]=0,
\ \
\bar \partial \alpha^{\prime}+\frac{1}{2}[\alpha^{\prime \prime} 
 \wedge \alpha^{\prime}]=0,
\ \
\partial \alpha^{\prime \prime}+\frac{1}{2}[\alpha^{\prime} 
 \wedge \alpha^{\prime \prime}]=0.
\ee
 The last two equations together with the condition \eqref{eq:admissible} 
 are nothing but the admissible $\nm$-harmonic map equation for $F_{\lambda =\pm 1}$.
\end{proof}
\begin{Remark}
\mbox{}
\begin{enumerate}
 \item It is easy to see, since $\alpha_{\lambda =1} =0$, 
 that the map $F_{\lambda =1}$ is a constant map.

\item Take a smooth curve $\gamma:\mathbb{S}^1\to G$ 
 satisfying $\gamma(1)=\id$.
 Then there exists a unique map $F_\lambda :\mathbb{D}\times 
 \mathbb{S}^1\to G$ satisfying $F_{\lambda}^{*}\theta
 =\alpha_{\lambda}$ and 
 the initial condition $F_{\lambda}(z_0)=\gamma(\lambda)$.

\item  Clearly, when assuming $F_{\lambda=1} = \id$ we obtain a map
 from $\mathbb{D}$ into the \textit{based loop group}
\be
 \Omega{G}=\{\gamma:\mathbb{S}^1\to G\ \vert \ \gamma(1)=\id \>\}.
\ee
 Note that extended solutions are holomorphic curves into 
 $\Omega G$ with respect to the canonical complex structure 
 of $\Omega G$.
\end{enumerate}
\end{Remark}
 Since $\nm$-harmonic maps with skew-symmetric $\mu$ 
 automatically satisfy 
 the admissibility condition 
 \eqref{eq:admissible}, by Remark \ref{Rm:admissible} and 
 by $(3)$ of Lemma \ref{lem:skewconnection}, $\nm$-harmonicity is equivalent 
 to $\neutral$-harmonicity. 
 We thus  have the following theorem as a corollary 
 to Proposition \ref{prop:admissibleharmonic}.
\begin{Theorem}\label{thm:familyofconnections}
 Let $M$ be a Riemann surface, $G$ a Lie group, $\nm$ 
 a left-invariant connection with skew-symmetric $\mu$ and 
 $\neutral$ the bi-invariant connection stated Definition \ref{def:threeconnections}.
 Then a map $\varphi : M \to G$ is a $\nm$-harmonic map if and only if 
 it is a $\neutral$-harmonic map. Moreover 
 let $\alpha = \varphi^{*}\theta = \alpha^{\prime} + 
 \alpha^{\prime \prime}$ be the Maurer-Cartan form of $\varphi$.
 Then the loop of connections $d+\alpha_{\lambda}$ defined by
\begin{equation}\label{alpha-family-skew}
 \alpha_{\lambda}:=
\frac{1}{2}(1-\lambda^{-1})\alpha^{\prime} 
 +\frac{1}{2}(1-\lambda)\alpha^{\prime\prime}
\end{equation}
 is flat for all $\lambda \in \mathbb S^1$. 

 Conversely, assume that $\mathbb{D}$ is simply-connected
 and let $\alpha_{\lambda}=
 \frac{1}{2}(1-\lambda^{-1})\alpha^{\prime}+
 \frac{1}{2}(1-\lambda)\alpha^{\prime \prime}$ 
 be an $\mathbb{S}^1$-family of 
 $\mathfrak{g}$-valued $1$-forms satisfying 
 the zero-curvature condition{\rm:}
\be
 d\alpha_{\lambda}+\frac{1}{2}[\alpha_\lambda\wedge \alpha_\lambda]=0
\ee
 for all $\lambda \in \mathbb{S}^1$.
 Then there exists a $1$-parameter family of maps $F_\lambda:
 \mathbb{D}\times \mathbb{S}^1\to G$ such that 
 $F_{\lambda}^{*}\theta= \alpha_{\lambda}$. 
 The map $F_{\lambda}$ is called an \textrm{extended solution} and it is
 $\nm$-harmonic for  any skew-symmetric $\mu$,
 thus it is also $\neutral$-harmonic, for $\lambda=\pm 1$.
\end{Theorem}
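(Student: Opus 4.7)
The plan is to deduce this theorem as a direct corollary of Proposition \ref{prop:admissibleharmonic}, together with the two observations already isolated earlier in the section: Lemma \ref{lem:skewconnection}(3), which identifies $\nm$-harmonicity with $\neutral$-harmonicity whenever $\mu$ is skew-symmetric, and Remark \ref{Rm:admissible}, which says that skew-symmetry of $\mu$ forces the admissibility condition \eqref{eq:admissible} to hold automatically (since $\sym \mu = 0$). Thus both hypotheses of Proposition \ref{prop:admissibleharmonic} are available for free in the present setting.

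For the forward direction, I would begin by invoking Lemma \ref{lem:skewconnection}(3) to get the first claim that $\nm$-harmonicity and $\neutral$-harmonicity coincide. Then I would observe that, because $\sym \mu = 0$, the map $\varphi$ is automatically admissible in the sense of \eqref{eq:admissible}. Proposition \ref{prop:admissibleharmonic} then applies verbatim and yields the flatness of the loop of connections $d + \alpha_\lambda$ defined by \eqref{alpha-family-skew} for every $\lambda \in \mathbb{S}^1$.

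For the converse direction, the argument is the same with the implications reversed. Given an $\mathbb{S}^1$-family $\alpha_\lambda$ of the prescribed form satisfying the zero-curvature equation on a simply-connected $\mathbb{D}$, I would note first that the admissibility condition \eqref{eq:admissible} is vacuous because $\sym \mu = 0$, so Proposition \ref{prop:admissibleharmonic} again applies and produces a one-parameter family $F_\lambda : \mathbb{D} \times \mathbb{S}^1 \to G$ with $F_\lambda^{*}\theta = \alpha_\lambda$ such that $F_{\pm 1}$ is admissible $\nm$-harmonic, hence $\nm$-harmonic. Applying Lemma \ref{lem:skewconnection}(3) once more transfers this to $\neutral$-harmonicity. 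Since $\mu$ never enters the formula \eqref{alpha-family-skew} explicitly, the same $F_{\pm 1}$ works simultaneously for every skew-symmetric $\mu$.

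I do not expect any serious obstacle: the entire theorem is essentially a specialization of Proposition \ref{prop:admissibleharmonic} to the sub-class of connections where $\sym \mu$ vanishes, combined with the identification provided by Lemma \ref{lem:skewconnection}(3). The only point that deserves a brief verification in the write-up is that skew-symmetry of $\mu$ really does kill the admissibility obstruction, i.e.\ that $(\sym \mu)(\alpha'\wedge\alpha'') = 0$ follows termwise from $\sym \mu = 0$; this is immediate from the definition of $\sym \mu$ and the bilinear extension used in Section \ref{sc:Pre}.
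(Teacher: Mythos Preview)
Your proposal is correct and matches the paper's approach essentially verbatim: the paper states this theorem explicitly as a corollary to Proposition \ref{prop:admissibleharmonic}, invoking Remark \ref{Rm:admissible} for automatic admissibility and Lemma \ref{lem:skewconnection}(3) for the equivalence of $\nm$-harmonicity with $\neutral$-harmonicity when $\mu$ is skew-symmetric.
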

\subsection{Zero-curvature representations for torsion-free affine 
 harmonic  maps}\label{subsc:strongly}
 In this subsection, we show that an admissible $\nm$-harmonic 
 map satisfying
 $[\alpha^{\prime} \wedge \alpha^{\prime \prime}] =0$ 
 admits a zero-curvature representation. As a corollary, we show that 
 a torsion-free $\nt$-harmonic $(t \neq 0)$ map
 admits a zero-curvature representation and give an explicit formula for such 
 a map.
\begin{Proposition}\label{prop:stronglyharmonicloop}
 Let $\varphi:M\to (G, \nm)$ be an admissible 
 $\nm$-harmonic map into a Lie group with
  a left-invariant connection $\nabla^\mu$ and 
 $\alpha = \varphi^{*}\theta  = \alpha^{\prime} + 
 \alpha^{\prime \prime}$ the Maurer-Cartan form of $\varphi$.
 Further assume that 
 \begin{equation}\label{eq:strongcondition}
 [\alpha^{\prime} \wedge \alpha^{\prime \prime}] =0.
 \end{equation}
 Then the loop of connections $d+\alpha_{\lambda}$ defined by
\begin{equation}\label{eq:alpha-familystrong}
 \alpha_{\lambda}:= \lambda^{-1}\alpha^{\prime} 
 +\lambda \alpha^{\prime\prime}
\end{equation}
 is flat for all $\lambda \in \mathbb S^1$. 

 Conversely assume that  $\mathbb{D}$ is simply-connected and 
 let $\alpha_{\lambda}= \lambda^{-1}\alpha^{\prime}+ \lambda
 \alpha^{\prime \prime}$ an $\mathbb{S}^1$-family of 
 $\mathfrak{g}$-valued $1$-forms satisfying the admissibility 
 condition \eqref{eq:admissible}
 and the zero-curvature condition{\rm:}
\be
 d\alpha_{\lambda}+\frac{1}{2}[\alpha_\lambda\wedge \alpha_\lambda]=0
\ee
 for all $\lambda \in \mathbb{S}^1$.
 Then there exists a $1$-parameter family of maps $\varphi_\lambda:
 \mathbb{D}\times \mathbb{S}^1\to G$
 such that  $\varphi^{*}\theta= \alpha_{\lambda}$. 
 The maps $\varphi_{\lambda}$ are admissible 
 $\nm$-harmonic maps satisfying the condition \eqref{eq:strongcondition}.
\end{Proposition}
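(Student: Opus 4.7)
My plan is to reduce both directions to type-arguments on the Riemann surface $M$. The crucial observation is that, since $M$ is one-dimensional over $\mathbb{C}$, every $(2,0)$- and every $(0,2)$-form vanishes identically; in particular $\partial\alpha^{\prime}=0=\bar\partial\alpha^{\prime\prime}$ and $[\alpha^{\prime}\wedge\alpha^{\prime}]=[\alpha^{\prime\prime}\wedge\alpha^{\prime\prime}]=0$. Consequently on $M$ the Maurer-Cartan equation for $\alpha=\alpha^{\prime}+\alpha^{\prime\prime}$ reduces to the single $(1,1)$-identity $\bar\partial\alpha^{\prime}+\partial\alpha^{\prime\prime}+[\alpha^{\prime}\wedge\alpha^{\prime\prime}]=0$, and $d\alpha_\lambda$ simplifies to $\lambda^{-1}\bar\partial\alpha^{\prime}+\lambda\,\partial\alpha^{\prime\prime}$.

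For the forward direction, admissibility \eqref{eq:admissible} together with Corollary \ref{coro:affharmonic} reduces the $\nm$-harmonicity equation \eqref{0-HME} to $\bar\partial\alpha^{\prime}-\partial\alpha^{\prime\prime}=0$. Combining this with the $(1,1)$-Maurer-Cartan identity and the hypothesis \eqref{eq:strongcondition}, I deduce $\bar\partial\alpha^{\prime}=0$ and $\partial\alpha^{\prime\prime}=0$ separately. Substituting $\alpha_\lambda=\lambda^{-1}\alpha^{\prime}+\lambda\alpha^{\prime\prime}$ into $d\alpha_\lambda+\tfrac{1}{2}[\alpha_\lambda\wedge\alpha_\lambda]$ and expanding in powers of $\lambda$, every Laurent coefficient vanishes: the $\lambda^{\pm 2}$-coefficients by the type-vanishings above, the $\lambda^{\pm 1}$-coefficients by $\bar\partial\alpha^{\prime}=\partial\alpha^{\prime\prime}=0$, and the constant term by $[\alpha^{\prime}\wedge\alpha^{\prime\prime}]=0$.

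For the converse, I would read off the three equations $\bar\partial\alpha^{\prime}=0$, $\partial\alpha^{\prime\prime}=0$ and $[\alpha^{\prime}\wedge\alpha^{\prime\prime}]=0$ directly from the Laurent decomposition of the zero-curvature identity in $\lambda$, after discarding the automatically vanishing $\lambda^{\pm 2}$-terms. These three identities together imply that both $\alpha=\alpha^{\prime}+\alpha^{\prime\prime}$ and every $\alpha_\lambda$ satisfy the Maurer-Cartan equation on $\mathbb{D}$. Since $\mathbb{D}$ is simply-connected, a standard Frobenius/integration argument produces, for each $\lambda\in\mathbb{S}^1$, a map $\varphi_\lambda:\mathbb{D}\to G$ with $\varphi_\lambda^{*}\theta=\alpha_\lambda$, depending smoothly on $\lambda$. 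To verify that each $\varphi_\lambda$ is admissible $\nm$-harmonic satisfying \eqref{eq:strongcondition}, I just check the corresponding identities on $\alpha_\lambda$, where the nonzero scalar factors $\lambda^{\pm 1}$ cancel cleanly through each condition and reduce everything back to the identities established for $\alpha$.

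The main obstacle is essentially bookkeeping: carefully separating the zero-curvature identity into its Laurent coefficients in $\lambda$, and keeping track of which of the admissibility, harmonicity and $[\alpha^{\prime}\wedge\alpha^{\prime\prime}]$-vanishing conditions feeds into each coefficient. Once the type-vanishings on the Riemann surface are in place the rest is routine, and no analytic input beyond the Frobenius-type integration on a simply-connected domain is required.
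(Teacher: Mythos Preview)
Your argument is correct and follows essentially the same route as the paper's proof: both reduce admissible $\nm$-harmonicity together with \eqref{eq:strongcondition} to the three identities $\bar\partial\alpha^{\prime}=\partial\alpha^{\prime\prime}=[\alpha^{\prime}\wedge\alpha^{\prime\prime}]=0$, and then observe that these are precisely the nontrivial Laurent coefficients of the curvature of $d+\alpha_\lambda$. Your write-up is in fact more explicit than the paper's, which simply asserts the equivalence of flatness with those three equations and leaves the type-vanishing and coefficient matching to the reader.
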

\begin{proof}
 By Proposition \ref{prop:affine-harmonicity},
 admissible $\nm$-harmonicity satisfying 
 condition \eqref{eq:strongcondition}
 gives
\be
 \bar \partial \alpha^{\prime} =
 \partial \alpha^{\prime \prime} = [\alpha^{\prime} \wedge 
 \alpha^{\prime \prime}] =0.
\ee
 Then it is easy to check that the flatness of $d + \alpha_{\l}$
 is exactly equivalent to the above three equations. 
 Conversely, it is also easy to check that the flatness of $d + \alpha_{\l}$ under 
 the conditions \eqref{eq:admissible} 
 gives admissible $\nm$-harmonicity satisfying the condition
 \eqref{eq:strongcondition}.
\end{proof}
 Moreover we can construct all admissible 
 $\nm$-harmonic maps satisfying \eqref{eq:strongcondition} as follows.
\begin{Theorem}\label{thm:strongharmonic}
 Let $G$ be a real Lie group with left-invariant connection $\nm$.
 An admissible $\nm$-harmonic map $\varphi:M\to G$ 
 satisfying \eqref{eq:strongcondition} 
 is, up to left translation, a map of the form
 \begin{equation}\label{eq:varPhi}
\varphi(z, \bar z) =
 \exp\left(\int_{z_*}^z\varPhi(t) dt +\int_{z_*}^{\bar z} \bar{\varPhi}(t) dt\right),
 \end{equation}
 where $z$ is a conformal coordinate on $M$,  
 $\varPhi$ is holomorphic and takes values in $\mathfrak g.$ Moreover,  $\bar\varPhi$, the complex conjugate of $\varPhi$, satisfies 
 \begin{equation}\label{eq:twoconditions}
 [\varPhi,\bar{\varPhi}]=0 \;\;\mbox{and}\;\;
 (\sym \mu)(\varPhi,\bar{\varPhi})=0.
\end{equation}
\end{Theorem}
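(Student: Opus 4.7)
The plan is to exploit Proposition \ref{prop:stronglyharmonicloop} to obtain a holomorphic $\mathfrak g^{\mathbb C}$-valued potential $\varPhi$, antidifferentiate to produce a real $\mathfrak g$-valued potential $F$, and then recognize $\varphi$ as $\exp F$ (up to left translation) by uniqueness of the Maurer-Cartan equation.

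First I would invoke Proposition \ref{prop:stronglyharmonicloop}: the assumption that $\varphi$ is admissible $\nm$-harmonic satisfying \eqref{eq:strongcondition} is equivalent to $\bar\partial\alpha'=0$, $\partial\alpha''=0$, $[\alpha'\wedge\alpha'']=0$, together with the admissibility $(\sym\mu)(\alpha'\wedge\alpha'')=0$. Fix a conformal coordinate $z$ on a simply-connected neighborhood and write $\alpha'=\varPhi(z)\,dz$. The equation $\bar\partial\alpha'=0$ forces $\varPhi$ to be $\mathfrak g^{\mathbb C}$-valued and holomorphic in $z$, and since $\alpha$ is real, $\alpha''=\bar\varPhi(\bar z)\,d\bar z$ with $\bar\varPhi$ the complex conjugate. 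Pointwise one obtains the two conditions in \eqref{eq:twoconditions}.

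Next, set
\be
F(z,\bar z) := \int_{z_*}^z\varPhi(t)\,dt + \int_{z_*}^{\bar z}\bar\varPhi(\bar t)\,d\bar t = A(z)+\overline{A(z)},
\ee
where $A(z):=\int_{z_*}^z\varPhi(t)\,dt$ is well-defined by simple connectedness and holomorphicity of $\varPhi$. Then $F$ is visibly $\mathfrak g$-valued (a sum of conjugate elements of $\mathfrak g^{\mathbb C}$) and satisfies $dF=\alpha$.

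The central step is to verify $\exp(F)^{-1}\,d\exp(F)=\alpha$. By the standard formula for the derivative of the exponential, the left-hand side equals $\tfrac{1-e^{-\operatorname{ad}F}}{\operatorname{ad}F}(dF)$, and so coincides with $dF$ precisely when $[F,\alpha]=0$ pointwise. I would split this into two stages. First, analytic continuation: the function $(z,w)\mapsto[\varPhi(z),\overline{\varPhi(w)}]$ is separately holomorphic in $z$ and antiholomorphic in $w$ and vanishes on the totally real antidiagonal $w=\bar z$, hence vanishes identically, giving $[\varPhi(z_1),\bar\varPhi(\bar z_2)]=0$ for all $z_1,z_2$; integration yields $[A(z_1),\overline{A(z_2)}]=0$. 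Second, one must show that the remaining brackets $[\varPhi(z_1),\varPhi(z_2)]$ (and its conjugate) also vanish, so that $\varPhi$ takes values in an abelian complex Lie subalgebra of $\mathfrak g^{\mathbb C}$; this places $F$ and $dF$ into a common abelian subalgebra, forcing $[F,\alpha]=0$ and collapsing the exp identity to $\exp(F)^{-1}\,d\exp(F)=dF=\alpha$. Finally, by uniqueness of solutions of the Maurer-Cartan equation on a simply-connected domain, $\varphi=g_0\cdot\exp(F)$ for some $g_0\in G$, and left-translating by $g_0^{-1}$ gives \eqref{eq:varPhi}. The principal obstacle is the second stage of the key computation: the vanishing of the pure brackets $[\varPhi(z_1),\varPhi(z_2)]$ does not follow from $[\varPhi,\bar\varPhi]=0$ alone, and its derivation requires combining the reality of $\alpha$ with the admissibility condition in \eqref{eq:twoconditions} to constrain the image of $\varPhi$ to an abelian subalgebra of $\mathfrak g^{\mathbb C}$.
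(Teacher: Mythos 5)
Your reduction to the system $\bar\partial\alpha'=\partial\alpha''=[\alpha'\wedge\alpha'']=(\sym\mu)(\alpha'\wedge\alpha'')=0$, the identification $\alpha'=\varPhi\,dz$ with $\varPhi$ holomorphic and $\mathfrak g^{\mathbb C}$-valued, and the analytic-continuation argument giving $[\varPhi(z_1),\overline{\varPhi(z_2)}]=0$ off the antidiagonal are all correct; the last point is in fact a detail the paper's own proof needs but does not spell out, since it passes from the pointwise identity $[\varPhi,\bar\varPhi]=0$ directly to $[\int\varPhi,\int\bar\varPhi]=0$. The problem is exactly the step you flag as the ``principal obstacle'': the identity $\exp(F)^{-1}d\exp(F)=dF$ also requires $[A(z),\varPhi(z)]=\int_{z_*}^{z}[\varPhi(t),\varPhi(z)]\,dt=0$, i.e.\ that the values of $\varPhi$ commute among themselves, and you do not prove this. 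Your suggestion that it follows from ``the reality of $\alpha$ combined with the admissibility condition'' cannot be made to work: reality is already exhausted in producing $\alpha''=\overline{\alpha'}$, and $(\sym\mu)(\varPhi,\bar\varPhi)=0$ constrains only the symmetric part of $\mu$ --- for the connections of principal interest ($\nt$, in particular $\neutral$) one has $\sym\mu=0$, so the condition is vacuous and cannot force the image of $\varPhi$ into an abelian subalgebra.

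Moreover, the missing commutativity is not merely unproved; it does not follow from the stated hypotheses. Take $G$ to be a complex Lie group regarded as a real Lie group, say $\mathrm{SL}_2\mathbb C$ with $\neutral$. Then $\mathfrak g^{\mathbb C}\cong\mathfrak h\oplus\bar{\mathfrak h}$ with the conjugation interchanging the summands, and for a holomorphic map $\varphi:\mathbb D\to G$ one finds $\varPhi=(U,0)$, $\bar\varPhi=(0,U)$ with $U=\varphi^{-1}\partial_z\varphi$ holomorphic; hence $[\varPhi,\bar\varPhi]=0$, $\bar\partial\alpha'=0$ and admissibility hold automatically, while $[\varPhi(z_1),\varPhi(z_2)]=([U(z_1),U(z_2)],0)$ is generically nonzero and $\varphi$ is the ordered exponential of $U$ rather than $\exp\int U$. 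So closing your second stage requires either an additional hypothesis (e.g.\ $\varPhi$ constant, or valued in a fixed abelian subalgebra, as in the examples the paper actually uses) or an amendment of the statement. Your write-up correctly localizes the difficulty --- and in doing so exposes a point on which the paper's own one-line argument is also silent --- but as a proof of \eqref{eq:varPhi} it is incomplete at precisely that spot.
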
 
\begin{proof}
 We have seen in the proof of Proposition \ref{prop:stronglyharmonicloop} that 
 admissible $\nm$-harmonic maps $\varphi:M\to G$ 
 satisfying \eqref{eq:strongcondition} are characterized by the equations 
 \begin{equation}\label{eq:stronglyharmonic}
 (\sym \mu) (\alpha^{\prime} \wedge \alpha^{\prime \prime}) = 
 [\alpha^{\prime} \wedge \alpha^{\prime \prime}] = 
 \bar \partial \alpha^{\prime} = \partial \alpha^{\prime \prime} =0.
 \end{equation}
 Set $\alpha^{\prime} = \varPhi dz$ and 
 $\alpha^{\prime \prime} = \bar \varPhi d\bar z$, where $\varPhi$
 takes values in $\mathfrak g$ and $\bar \varPhi$ denotes the 
 complex conjugate.
 Then the conditions \eqref{eq:stronglyharmonic}
 are equivalent with that $(\sym \mu)(\varPhi,\bar{\varPhi})=0$, 
 $[\varPhi,\bar{\varPhi}] =0$ and $\varPhi$ is holomorphic.
 Moreover, since $[\varPhi,\bar{\varPhi}] =0$, we have $[\int_{z_*}^z\varPhi(t) d t,
 \int_{z_*}^{\bar z}\bar{\varPhi}(t) dt] =0$, and the map $\varphi$ is given by 
 \eqref{eq:varPhi}, up to left translation.
\end{proof}
\begin{Remark}
 This type of solutions has been investigated by \cite{Jensen-Liao}
 for complex projective spaces. 
\end{Remark}
 It should be recalled that 
 by Proposition \ref{allconnections}
 the torsion of a map $\varphi:M\to (G,\nt)$ can be computed as 
 $\varphi^{*}T^{{}^{(t)}\mu}(\alpha^{\prime} \wedge \alpha^{\prime \prime}) 
 = t [\alpha^{\prime}\wedge \alpha^{\prime\prime}]$.
 Thus a map $\varphi:M\to (G,\nt),  t\neq 0,$ satisfies the torsion free condition if 
 and only if 
\be
 [\alpha^{\prime}\wedge \alpha^{\prime\prime}] =0.
\ee
 Moreover the bilinear map 
 ${}^{(t)}\mu$ in \eqref{def:threeconnections} is skew-symmetric, that is, the admissibility condition is
 automatically satisfied, and thus $\varphi$ admits the zero-curvature representation.
 We summarize the above discussion as follows:
\begin{Corollary}\label{coro:torsion-freeharmonicity}
 Let $\varphi:M\to (G, \nt)$ be a 
 torsion free $\nt$-harmonic map $(t \neq 0)$ into a Lie group with
 a left-invariant connection $\nt, t \neq 0,$ and 
 $\alpha = \varphi^{*}\theta  = \alpha^{\prime} + 
 \alpha^{\prime \prime}$ the Maurer-Cartan form of $\varphi$.
 Then the loop of connections $d+\alpha_{\lambda}$, defined by
\begin{equation}\label{eq:alpha-familystrong2}
 \alpha_{\lambda}:= \lambda^{-1}\alpha^{\prime} 
 +\lambda \alpha^{\prime\prime},
\end{equation}
 is flat for all $\lambda \in \mathbb S^1$. 
 Conversely assume that  $\mathbb{D}$ is simply-connected and 
 let $\alpha_{\lambda}= \lambda^{-1}\alpha^{\prime}+ \lambda
 \alpha^{\prime \prime}$ be an $\mathbb{S}^1$-family of 
 $\mathfrak{g}$-valued $1$-forms satisfying 
 the zero-curvature condition
\be
 d\alpha_{\lambda}+\frac{1}{2}[\alpha_\lambda\wedge \alpha_\lambda]=0
\ee
 for all $\lambda \in \mathbb{S}^1$.
 Then there exists a $1$-parameter family of maps $\varphi_\lambda:
 \mathbb{D}\times \mathbb{S}^1\to G$
 such that  $\varphi^{*}\theta= \alpha_{\lambda}$. 
 The maps $\varphi_{\lambda}$ are torsion-free $\nt$-harmonic maps 
 for any $t \neq 0$ and $\lambda \in \mathbb S^1$. 
 Moreover such torsion-free $\nt$-harmonic maps $\varphi_{\lambda}$, 
 up to left translation, are given as 
\be
 \varphi_{\lambda}(z, \bar z) = 
 \exp\left(\lambda^{-1} \int_{z_*}^z\varPhi(t) dt +\lambda \int_{z_*}^{\bar z} \bar{\varPhi}(t) dt\right),
\ee
 where $z \in \D$ is conformal coordinates and 
 $\varPhi$ is holomorphic, and its conjugate $\bar \varPhi$ satisfies
 $[\varPhi, \bar \varPhi] =0$.
\end{Corollary}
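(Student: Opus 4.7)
The plan is to obtain this corollary as a direct specialization of Proposition \ref{prop:stronglyharmonicloop} and Theorem \ref{thm:strongharmonic} to the connection $\nt$ with $t\neq 0$. The key preliminary observation is that the bilinear map ${}^{(t)}\mu(X,Y)=\tfrac12(1+t)[X,Y]$ is manifestly skew-symmetric, so by Remark \ref{Rm:admissible} every $\nt$-harmonic map is automatically admissible, and moreover $\sym {}^{(t)}\mu =0$. This will render two of the hypotheses below automatic.

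First I would translate the torsion-free condition into the hypothesis \eqref{eq:strongcondition} of Proposition \ref{prop:stronglyharmonicloop}. By Proposition \ref{allconnections}, the torsion of $\nt$ is given by $T^{{}^{(t)}\mu}(X,Y)=t[X,Y]$, so the pullback torsion along $\varphi$ reads $\varphi^{*}T^{{}^{(t)}\mu}(\alpha^{\prime}\wedge\alpha^{\prime\prime})=t[\alpha^{\prime}\wedge\alpha^{\prime\prime}]$. For $t\neq 0$ this vanishes if and only if $[\alpha^{\prime}\wedge\alpha^{\prime\prime}]=0$, which is precisely \eqref{eq:strongcondition}. Hence torsion-free $\nt$-harmonic maps ($t\neq 0$) coincide with admissible $\nt$-harmonic maps satisfying \eqref{eq:strongcondition}.

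With this identification, the two zero-curvature statements follow immediately from Proposition \ref{prop:stronglyharmonicloop} applied with $\mu={}^{(t)}\mu$. In the forward direction the hypotheses are met by the previous paragraph; in the converse direction, the admissibility condition \eqref{eq:admissible} required by Proposition \ref{prop:stronglyharmonicloop} is for free because ${}^{(t)}\mu$ is skew-symmetric, and the zero-curvature condition on $\alpha_{\lambda}=\lambda^{-1}\alpha^{\prime}+\lambda\alpha^{\prime\prime}$ then delivers the family $\varphi_{\lambda}$ together with the required admissibility and \eqref{eq:strongcondition}, hence torsion-freeness at every $\lambda\in\mathbb S^{1}$.

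For the explicit formula I would invoke Theorem \ref{thm:strongharmonic} with $\mu={}^{(t)}\mu$: it asserts that every such map is, up to left translation, of the form $\exp\!\bigl(\int_{z_{*}}^{z}\varPhi\,dt+\int_{z_{*}}^{\bar z}\bar\varPhi\,dt\bigr)$ with $\varPhi$ holomorphic and $\mathfrak g$-valued, subject to the two conditions \eqref{eq:twoconditions}. The condition $(\sym {}^{(t)}\mu)(\varPhi,\bar\varPhi)=0$ is automatic since $\sym {}^{(t)}\mu=0$, so only $[\varPhi,\bar\varPhi]=0$ remains. Applied to $\alpha_{\lambda}=(\lambda^{-1}\varPhi)\,dz+(\lambda\bar\varPhi)\,d\bar z$, whose $(1,0)$- and $(0,1)$-parts still commute, the same formula produces the displayed $\lambda$-parametrized exponential. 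There is no real obstacle in the argument; the entire proof is a check that the skew-symmetry of ${}^{(t)}\mu$ and the torsion identity $T^{{}^{(t)}\mu}(X,Y)=t[X,Y]$ reduce all the hypotheses of the earlier results to the single condition $[\alpha^{\prime}\wedge\alpha^{\prime\prime}]=0$.
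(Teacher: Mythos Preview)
Your proposal is correct and follows essentially the same approach as the paper: the paper derives the corollary from Proposition~\ref{prop:stronglyharmonicloop} and Theorem~\ref{thm:strongharmonic} after observing (i) that ${}^{(t)}\mu$ is skew-symmetric, so admissibility and the condition $(\sym {}^{(t)}\mu)(\varPhi,\bar\varPhi)=0$ are automatic, and (ii) that the torsion $T^{{}^{(t)}\mu}(X,Y)=t[X,Y]$ makes torsion-freeness for $t\neq 0$ equivalent to $[\alpha'\wedge\alpha'']=0$. Your argument matches this exactly.
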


\subsection{Admissible harmonic maps into Lie groups with left-invariant metrics}
 Clearly, a particularly interesting type of connections consists of 
 the Levi-Civita connections of left-invariant metrics on semi-simple Lie groups.
 Therefore, and more generally, in this section  we consider arbitrary Lie groups 
 together with a left-invariant semi-Riemannian metric.
 We denote a non-degenerate symmetric bilinear form   on the Lie 
 algebra $\mathfrak{g}$ of the  real Lie group $G$  by $\langle\cdot,\cdot\rangle$ 
 and extend it to a left-invariant semi-Riemannian metric 
 $ds^2=\langle \cdot,\cdot\rangle$ on $G$.
 Next we introduce an \textit{anti-commutator product} 
 $\{\cdot,\cdot\}$ on $\mathfrak{g}$ by 
\begin{equation}\label{NR}
 \{X,Y\}:=\nabla_{X}Y+\nabla_{Y}X, \ \  X,Y\in \mathfrak{g}.
\end{equation}
 Clearly $\{X,Y\}=\{Y,X\}$.
 It is easy to see that  $ds^2$ is right invariant if and only if 
 $\{\cdot, \cdot\}$ vanishes.
 The Levi-Civita connection $\nabla$ of $(G,ds^2)$ is given by 
\cite[Proposition 3.18]{CE}, 
\begin{equation}\label{Levi-CivitaRelation}
 \nabla_{X}Y=\frac{1}{2}[X,Y]+\frac{1}{2}\{X, Y\}, \;\;X, Y \in \mathfrak g.
\end{equation}
 Hence $\nabla$ is the left-invariant connection 
 $\nabla = \nabla^{\mu}$  associated with  the bilinear map $\mu$ given by
\be
 \mu(X, Y)=\frac{1}{2}[X,Y] +\frac{1}{2}\{X, Y\}.
\ee
This implies immediately the following Lemma.
\begin{Lemma}
The Levi-Civita connection $\nabla$ of some  left-invariant metric 
 $ds^2$ on $G$  is bi-invariant if and only if $\nabla=\neutral$.
\end{Lemma}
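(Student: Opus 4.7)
The forward direction, $\nabla = \neutral \Rightarrow \nabla$ bi-invariant, is immediate from Definition~\ref{def:threeconnections}, where $\neutral$ was introduced precisely as a bi-invariant connection. Hence I focus on the converse.

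The plan is to show $\{\cdot,\cdot\} \equiv 0$, which by \eqref{Levi-CivitaRelation} forces $\mu = \tfrac12[\cdot,\cdot]$ and hence $\nabla = \neutral$. By Proposition~\ref{allconnections} and the parametrization of $m^{bi}_{\mathfrak g}$ given in the Remark after Definition~\ref{def:threeconnections}, bi-invariance of $\nabla = \nabla^\mu$ is equivalent to $\mu$ being $\ad$-invariant, i.e.\ $\mu(\ad(g)X,\ad(g)Y) = \ad(g)\mu(X,Y)$. Since $[\cdot,\cdot]$ is automatically $\ad$-invariant by the Jacobi identity, and $\mu = \tfrac12[\cdot,\cdot] + \tfrac12\{\cdot,\cdot\}$ by \eqref{Levi-CivitaRelation}, this is equivalent to the $\ad$-invariance of the anti-commutator product $\{\cdot,\cdot\}$.

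I then couple this with the Koszul formula. For left-invariant $X,Y,Z$ the three directional-derivative terms in Koszul vanish, and combining two instances of the resulting identity with the definition \eqref{NR} of $\{\cdot,\cdot\}$ yields $\langle\{X,Y\},Z\rangle = -\langle[X,Z],Y\rangle - \langle[Y,Z],X\rangle$, so in particular $\{X,X\} = -2\,\operatorname{ad}^{*}_{X}X$, where $\operatorname{ad}^{*}_{X}$ denotes the $\langle\cdot,\cdot\rangle$-adjoint of $\operatorname{ad}_{X}$. Differentiating the $\ad$-invariance of $\{\cdot,\cdot\}$ at the identity gives $[W,\{X,Y\}] = \{[W,X],Y\} + \{X,[W,Y]\}$; specialising to $Y=X$ and $W=X$ produces $[X,\{X,X\}] = 0$, equivalently $[X,\operatorname{ad}^{*}_{X}X] = 0$.

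Consequently $\langle\operatorname{ad}^{*}_{X}X,\operatorname{ad}^{*}_{X}X\rangle = \langle X,[X,\operatorname{ad}^{*}_{X}X]\rangle = 0$. In the positive-definite (Riemannian) case this immediately forces $\operatorname{ad}^{*}_{X}X = 0$ for every $X$, whence $\{X,X\}=0$; polarizing the symmetric map $X \mapsto \{X,X\}$ then gives $\{X,Y\}=0$ for all $X,Y$, and the proof is complete. The main obstacle is the semi-Riemannian case, in which the above only says that $\operatorname{ad}^{*}_{X}X$ is null. To promote nullity to vanishing I would polarize the identity $\langle\operatorname{ad}^{*}_{Z}Z,\operatorname{ad}^{*}_{Z}Z\rangle=0$ in $Z=X+tY$ and read off the auxiliary relation $\|\operatorname{ad}^{*}_{X}Y + \operatorname{ad}^{*}_{Y}X\|^{2} = -2\langle\operatorname{ad}^{*}_{X}X,\operatorname{ad}^{*}_{Y}Y\rangle$, then combine it with the full $\ad$-invariance equation on mixed pairs and the non-degeneracy of $\langle\cdot,\cdot\rangle$ to extract $\{X,Y\}=0$, yielding $\nabla = \neutral$.
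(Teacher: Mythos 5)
Your argument is necessarily more detailed than the paper's, which offers essentially no proof: the Lemma is asserted to follow ``immediately'' from \eqref{Levi-CivitaRelation} together with the observation that $ds^2$ is right invariant if and only if $\{\cdot,\cdot\}$ in \eqref{NR} vanishes. Your reduction is the correct one: bi-invariance of $\nabla=\nabla^{\mu}$ is $\operatorname{Ad}$-invariance of $\mu$, hence of $\{\cdot,\cdot\}$, and the task is to show that this forces $\{\cdot,\cdot\}\equiv 0$. The Koszul identity $\langle\{X,Y\},Z\rangle=-\langle[X,Z],Y\rangle-\langle[Y,Z],X\rangle$, the consequence $\{X,X\}=-2\operatorname{ad}^{*}_{X}X$, the infinitesimal invariance $[X,\{X,X\}]=0$, and the resulting $\langle\operatorname{ad}^{*}_{X}X,\operatorname{ad}^{*}_{X}X\rangle=0$ are all correct, and in the Riemannian case the proof is complete: positive definiteness gives $\operatorname{ad}^{*}_{X}X=0$ for every $X$, polarization of the symmetric map $X\mapsto\{X,X\}$ gives $\{\cdot,\cdot\}\equiv 0$, hence $\mu=\tfrac{1}{2}[\cdot,\cdot]$ and $\nabla=\neutral$.

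The gap is the semi-Riemannian case, which is the setting the paper actually adopts (the subsection begins with a non-degenerate, not necessarily definite, form $\langle\cdot,\cdot\rangle$, and the Lemma's $ds^2$ is that metric). There your final step is only a declaration of intent: you correctly record that $\operatorname{ad}^{*}_{X}X$ is null and that polarization yields $\langle\operatorname{ad}^{*}_{X}Y+\operatorname{ad}^{*}_{Y}X,\operatorname{ad}^{*}_{X}Y+\operatorname{ad}^{*}_{Y}X\rangle=-2\langle\operatorname{ad}^{*}_{X}X,\operatorname{ad}^{*}_{Y}Y\rangle$, but ``combine it with the full $\operatorname{Ad}$-invariance equation on mixed pairs and the non-degeneracy of $\langle\cdot,\cdot\rangle$ to extract $\{X,Y\}=0$'' is not yet an argument. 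Nullity of a vector in an indefinite inner product space does not imply it vanishes, and the identities you have assembled only constrain inner products among the vectors $\operatorname{ad}^{*}_{X}X$ and $\operatorname{ad}^{*}_{X}Y+\operatorname{ad}^{*}_{Y}X$ without forcing any of them to be zero; non-degeneracy of $\langle\cdot,\cdot\rangle$ by itself cannot convert such relations into the vanishing of $\{X,Y\}$. To close the Lemma as stated you would need to either supply a genuine argument that an $\operatorname{Ad}$-equivariant $\{\cdot,\cdot\}$ of the form $\{X,Y\}=-(\operatorname{ad}^{*}_{X}Y+\operatorname{ad}^{*}_{Y}X)$ must vanish for indefinite metrics as well, or explicitly restrict the statement to Riemannian $ds^2$. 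As it stands, your proposal proves the Lemma only for positive definite metrics.
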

 Thus the Levi-Civita connection $\nabla$ is a left-invariant 
 connection determined by the bilinear map $\mu$ such that 
\begin{equation}\label{eq:skew-symm}
(\sk \mu)(X,Y)=\frac{1}{2}[X,Y],
\;\;
(\sym \mu)(X,Y)=\frac{1}{2}\{X,Y\}.
\end{equation}
 Let us consider a smooth map $\varphi:M\to (G,ds^2)$ 
 from a Riemann surface $M$ into a Lie group $G$ with a 
 left-invariant metric $ds^2$, 
 and let $\nabla$ denote the Levi-Civita connection of $ds^2$.

 From Corollary \ref{coro:affharmonic}, the $\nabla$-harmonicity is characterized 
 by \eqref{0-pluri}. Moreover since 
 the symmetric part of $\mu$ is given in \eqref{eq:skew-symm}, we thus have 
\begin{equation}\label{harm+integ}
  2 \bar \partial \alpha^{\prime } + \{\alpha^{\prime \prime}\wedge 
 \alpha^{\prime}\} + [\alpha^{\prime \prime} \wedge \alpha^{\prime}]=0.
\end{equation}
 In the study of harmonic maps of Riemann surfaces into 
 compact semi-simple Lie groups equipped with a
 bi-invariant Riemannian metric, the zero curvature representation is 
 the starting point of the loop group approach 
 by Uhlenbeck \cite{Uhlenbeck} and Segal \cite{Segal}.
 We recall that $\{\cdot,\cdot\}$
 is the symmetric part of $\mu$ by \eqref{eq:skew-symm}, 
 thus  the admissibility condition \eqref{eq:admissible} is satisfied  if and only if 
 \begin{equation}\label{eq:admMetricLie}
 \{ \alpha^{\prime} \wedge \alpha^{\prime \prime}\} =0.
 \end{equation}

 Therefore, from Proposition \ref{prop:admissibleharmonic} 
 we obtain the following.
\begin{Proposition}\label{ZCRforG}
 Let $\varphi:\mathbb{D}\to (G, ds^2)$ be 
 an admissible harmonic map with respect 
 to the Levi-Civita connection 
 and $\alpha = \varphi^{*}\theta = 
 \alpha^{\prime} + \alpha^{\prime \prime}$ the Maurer-Cartan 
 form. 
 Then the loop of connections $d+\alpha_{\lambda}$ defined by
\begin{equation}
 \alpha_{\lambda}:=
 \frac{1}{2}(1-\lambda^{-1})\alpha^{\prime}+
 \frac{1}{2}(1-\lambda)\alpha^{\prime \prime}
\end{equation}
 is flat for all $\lambda \in \mathbb S^1$. 

 Conversely assume that 
 $\mathbb{D}$ is simply-connected. Let $\alpha_{\lambda}=
 \frac{1}{2}(1-\lambda^{-1})\alpha^{\prime}+
 \frac{1}{2}(1-\lambda)\alpha^{\prime \prime}$ 
 be an $\mathbb{S}^1$-family of $\mathfrak{g}$-valued 
 $1$-forms satisfying \eqref{eq:admMetricLie} and 
 the zero-curvature condition
\be
 d\alpha_{\lambda}+\frac{1}{2}[\alpha_\lambda\wedge \alpha_\lambda]=0
\ee
 for all $\lambda \in \mathbb{S}^1$.
 Then there exists a $1$-parameter family of maps $F_\lambda:
 \mathbb{D}\times \mathbb{S}^1\to G$ such that 
 $F_{\lambda}^{*}\theta=\alpha_{\lambda}$. 
 The map $F_{\lambda}$ is called the extended solution associated with 
 $\alpha_\lambda$ and 
 it is admissible harmonic with respect to the Levi-Civita connection 
 for $\lambda=\pm 1$. 
\end{Proposition}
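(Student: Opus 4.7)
The plan is to deduce this proposition directly from Proposition \ref{prop:admissibleharmonic} by viewing the Levi-Civita connection $\nabla$ of $ds^2$ as a specific instance of a left-invariant affine connection $\nabla^{\mu}$. First I would invoke \eqref{Levi-CivitaRelation} to write $\nabla = \nabla^{\mu}$, where
\begin{equation*}
\mu(X,Y) = \tfrac{1}{2}[X,Y] + \tfrac{1}{2}\{X,Y\},
\end{equation*}
and then read off the decomposition $(\sk \mu)(X,Y) = \tfrac{1}{2}[X,Y]$, $(\sym \mu)(X,Y) = \tfrac{1}{2}\{X,Y\}$ from \eqref{eq:skew-symm}. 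Under this identification, the general admissibility condition \eqref{eq:admissible}, namely $(\sym \mu)(\alpha' \wedge \alpha'') = 0$, becomes precisely the metric admissibility condition $\{\alpha' \wedge \alpha''\} = 0$ stated in \eqref{eq:admMetricLie}.

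Next I would apply the forward direction of Proposition \ref{prop:admissibleharmonic} to the admissible $\nabla^{\mu}$-harmonic map $\varphi$. This immediately yields the flatness of the loop $d + \alpha_{\lambda}$ for the $\alpha_{\lambda}$ defined by \eqref{alpha-family}, which is the same expression appearing in the present statement. No additional computation is required, since the expression for the loop depends only on $\alpha'$ and $\alpha''$, not on $\mu$.

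For the converse direction, I would apply the converse half of Proposition \ref{prop:admissibleharmonic}: the hypothesized $\mathbb S^1$-family of $\mathfrak g$-valued $1$-forms $\alpha_{\lambda}$ satisfies both the zero-curvature condition and the admissibility condition \eqref{eq:admMetricLie}, which, by the identification above, is exactly \eqref{eq:admissible} for $\mu = \tfrac{1}{2}[\cdot,\cdot] + \tfrac{1}{2}\{\cdot,\cdot\}$. Hence Proposition \ref{prop:admissibleharmonic} provides a $1$-parameter family $F_{\lambda} : \mathbb D \times \mathbb S^1 \to G$ with $F_{\lambda}^{*}\theta = \alpha_{\lambda}$, and $F_{\pm 1}$ is admissible $\nabla^{\mu}$-harmonic, which here means admissible harmonic with respect to the Levi-Civita connection of $ds^2$.

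There is essentially no obstacle to this proof: it is a direct specialization of Proposition \ref{prop:admissibleharmonic} combined with the fact, recorded in \eqref{Levi-CivitaRelation} and \eqref{eq:skew-symm}, that the Levi-Civita connection of a left-invariant (possibly indefinite) metric is one particular member of the family of left-invariant affine connections $\nabla^{\mu}$. The only conceptual point that needs to be emphasized is the identification of the two forms of the admissibility condition, after which the statement is a corollary in both directions.
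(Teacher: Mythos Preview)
Your proposal is correct and matches the paper's own treatment exactly: the paper states immediately before this proposition that it is obtained from Proposition~\ref{prop:admissibleharmonic}, and your argument spells out precisely this specialization via \eqref{Levi-CivitaRelation}--\eqref{eq:skew-symm} and the identification of \eqref{eq:admissible} with \eqref{eq:admMetricLie}.
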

\begin{Remark}
 The extended solution 
 $F_{\lambda}$ is not harmonic for $\lambda\not=\pm 1,$ but 
 is a solution to the \textit{harmonic map equation with Wess-Zumino term}
 \cite{Hitchin2, Khemar, Tyrin}. 
 In case, the metric is bi-invariant, then 
 $\varphi_{\lambda}:=F_{-\lambda}F_{\lambda}^{-1}$ is harmonic 
 for all $\lambda\in \mathbb{S}^1$, see \cite{DE}.  
\end{Remark}
\subsection{The admissibility condition}
\label{subsc:admissibility}
 In the previous subsections we have characterized admissible affine harmonic maps 
 by flat connections. Thus the admissibility condition \eqref{eq:admissible}
 is important for the loop group method. So the following 
 two questions naturally arise:
 \begin{enumerate}
\item Does there exist a \textit{non} skew-symmetric $\mu$ such that 
 \textit{every} $\nm$-harmonic map satisfies the admissibility condition 
 \eqref{eq:admissible}?
\item Does there exist any non-skew-symmetric $\mu$ such that 
 the connection $\nm$ admits an admissible $\nm$-harmonic map?
 \end{enumerate}
 The answer to the first question is as follows:
\begin{Proposition}\label{pro:admissibleandmu}
 Let $G$ be a Lie group and $\nm$ a left-invariant connection
 determined by a bilinear map $\mu$. 
 Assume that every $\nm$-harmonic map into $(G, \nm)$
 satisfies the admissibility 
 condition \eqref{eq:admissible}. 
 Then $\mu$ is skew-symmetric.
\end{Proposition}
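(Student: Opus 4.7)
The plan is to prove the contrapositive: assuming $\sym\mu \neq 0$, I would construct a $\nabla^\mu$-harmonic map violating the admissibility condition \eqref{eq:admissible}, contradicting the hypothesis.

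First, since $\sym\mu$ is a $\mathfrak g$-valued symmetric bilinear form on $\mathfrak g$, the polarization identity $4(\sym\mu)(X,Y) = (\sym\mu)(X+Y, X+Y) - (\sym\mu)(X-Y, X-Y)$ shows that $\sym\mu \neq 0$ forces the existence of some $A_0 \in \mathfrak g$ with $(\sym\mu)(A_0, A_0) \neq 0$. I would then try the simplest nontrivial ansatz: a $\mathfrak g$-valued $1$-form depending only on the real coordinate $x = \Re z$, namely $\alpha = 2A(x)\,dx$ with smooth $A : I \to \mathfrak g$ on an open interval $I \ni 0$. Its type-decomposition is $\alpha' = A(x)\,dz$, $\alpha'' = A(x)\,d\bar z$, and a short direct computation shows the Maurer-Cartan equation \eqref{eq:MCforalpha} is automatically satisfied (the $\partial$- and $\bar\partial$-parts cancel and $[A,A]=0$). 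By Corollary \ref{coro:affharmonic}, the combined equation \eqref{0-pluri} then collapses to the single $\mathfrak g$-valued ODE
\[
A'(x) = -2 (\sym\mu)(A(x), A(x)), \qquad A(0) = A_0,
\]
for which Picard--Lindel\"of supplies a smooth solution on some interval $(-\epsilon, \epsilon)$.

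Applying Theorem \ref{IntegrabilityTheorem} on the simply-connected strip $U = \{ z \in \mathbb C : |\Re z| < \epsilon\}$ then integrates this $\alpha$ to a $\nabla^\mu$-harmonic map $\varphi : U \to G$ with $\varphi^* \theta = \alpha$. Evaluating the admissibility $2$-form at $z = 0$ gives $(\sym\mu)(\alpha' \wedge \alpha'')(\partial_z, \partial_{\bar z})|_{z=0} = (\sym\mu)(A_0, A_0) \neq 0$, so $\varphi$ violates \eqref{eq:admissible}. This contradicts the hypothesis, whence $\sym\mu = 0$, i.e., $\mu$ is skew-symmetric.

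The one point requiring care is the choice of ansatz for $\alpha$. Because $\alpha$ is $\mathfrak g$-valued (real), the $(1,0)$- and $(0,1)$-coefficients must be complex conjugates, so one cannot freely prescribe them as an initial-value problem for a coupled PDE system. Restricting to an $\alpha$ depending on a single real coordinate trivialises the reality constraint, makes Maurer--Cartan automatic, and reduces harmonicity to an ODE whose nonlinearity is precisely $\sym\mu$; this is exactly what makes the non-admissibility visible at the initial point. Beyond this observation, the remainder of the argument is routine.
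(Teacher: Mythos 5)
Your proposal is correct and follows essentially the same route as the paper's own proof: the same ansatz $\alpha'=A(x)\,dz$, $\alpha''=A(x)\,d\bar z$ with Maurer--Cartan automatic, the same reduction of harmonicity to the ODE $A'+2(\sym\mu)(A,A)=0$ with arbitrary initial value, and the same evaluation of the admissibility form at the base point. The only cosmetic difference is that you argue by contrapositive via polarization, whereas the paper concludes directly that $\mu(A_0,A_0)=0$ for every $A_0$ and hence $\mu$ is skew-symmetric.
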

\begin{proof}
 Let $z = x+ \sqrt{-1}y$
 be a conformal coordinate on $\D\subset \C$, and set 
 $\alpha^{\prime} = A(x) dz$
 and $\alpha^{\prime\prime} = A(x) d\bar z$, where $A (x)$ is 
 an arbitrary function of $x$ which takes values in $\mathfrak g$. Then 
 it is clear that the Maurer-Cartan equation \eqref{eq:MCforalpha}
 is automatically satisfied. Moreover, the 
 $\nm$-harmonicity equation \eqref{eq:mu-harmonic} is equivalent with 
 the ordinary differential equation for $A(x)$:
 \begin{equation}\label{eq:odeforA}
 \frac{d}{d x} A(x) + 2 \mu (A(x), A(x))=0.
 \end{equation}
 Consider $A(x)$ 
 with (arbitrarily chosen) initial condition $A_0 = A(0) \in \mathfrak g$.
 Therefore there exists a $\nm$-harmonic map satisfying $\alpha = \varphi^* \theta 
 = A(x) dz + A(x) d \bar z$. 
 
 On the other hand, by the admissibility condition \eqref{eq:admissible} we have 
 $\mu (A(x), A(x))=0$.
 In particular $\mu (A(0), A(0))=0$. Since the initial condition $A_0 =A(0)$
 for \eqref{eq:odeforA} was chosen arbitrary, we conclude that 
\be
 \mu (A_0, A_0)=0
\ee
 for any $A_0 \in \mathfrak g$. Thus $\mu$ is skew-symmetric.
\end{proof}
 To answer the second question, we introduce the 
 notion 
of a \textit{ primitive map} into $k$-symmetric Lie groups ($k>2$). 
 Let $G$ be a connected Lie group with a Lie group automorphism $\tau$ of order $k>2$ 
 such that the unit element is the only element of $G$ fixed by $\tau$.
 The pair $(G,\tau)$ is referred to as a $k$-\textit{symmetric Lie group}. 
 A $k$-symmetric Lie group $(G,{}^{(-1)}\nabla)$ equipped 
 with the canonical connection 
 is said to be \textit{affine $k$-symmetric} if $\tau$ is an affine transformation 
 with respect to the canonical connection. Note that affine $k$-symmetric Lie groups are 
 affine $k$-symmetric spaces in the sense of Kowalski \cite{Kowalski}. Note that 
 affine $k$-symmetric Lie groups are solvable \cite[Proposition V.9]{Kowalski}.

 If we equip $G$ with a left invariant semi-Riemannian metric $ds^2$  
 such that $\tau$ is an isometry with respect to this metric, 
 then the resulting semi-Riemannian homogeneous space 
 $(G/\{\mathrm{Id}\},\tau,ds^2)$ is a semi-Riemannian $k$-symmetric space in the sense of 
 \cite{Kowalski}.

 The eigenvalues of $d\tau$ on $\mathfrak{g}$ are 
 contained in the set
 $\{\omega^{j}\ \vert \ j\in \mathbb{Z}/k\mathbb{Z}\}$, where 
 $\omega=e^{2\pi{i}/k}$ is the primitive $k$-th root of unity.  
 We have an eigenspace decomposition
 of the complexification
 $\mathfrak{g}^\mathbb{C}$ of $\mathfrak{g}$:
\be
\mathfrak{g}^\mathbb{C}=
\sum_{j \in \mathbb{Z}/\mathbb{Z}_k}\mathfrak{g}^\mathbb{C}_{j},
\ee
 where ${\mathfrak g}_{j}^{\mathbb C}$ is the
 eigenspace of $d\tau$ 
 corresponding to the eigenvalue $\omega^j$.
 Clearly, ${\mathfrak g}_{0}^\mathbb{C}=\{\Vec{0}\}$. 
 Moreover we have 
\be
[{\mathfrak g}^\mathbb{C}_{j},\ {\mathfrak g}^\mathbb{C}_{\ell}]=
{\mathfrak g}^\mathbb{C}_{j+\ell} 
\ \mod k.
\ee
 Note that ${\mathfrak g}^\mathbb{C}_{1}\cap
 {\mathfrak g}^\mathbb{C}_{-1}=\{{\mathbf 0}\}$
 since $k>2$.
\begin{Definition}
 Let $\varphi:M\to (G,\tau)$ be a smooth map from a Riemann surface
 into a $k$-symmetric Lie group. Then $\varphi$ is said to be 
 a \textit{primitive map} if
\be
\alpha^{\prime}(T^{(1,0)}M)\subset \mathfrak{g}^{\mathbb C}_{-1}.
\ee
 Here $\alpha^{\prime}$ is the $(1,0)$-part of $\alpha=\varphi^{*}\theta$.
\end{Definition}
 Note that the notion of  primitivity does not depend
 on any choice of left-invariant 
 affine connection. If we express the Maurer-Cartan form 
 $\alpha=\varphi^{-1}d\varphi$ as $\alpha=\alpha^{\prime}+\alpha^{\prime\prime}$ with 
 $\alpha^{\prime}=\varPhi\>dz$, then $\varphi$ is primitive 
 if and only if $\varPhi\in \mathfrak{g}_{-1}^{\C}$. 

 We obtain the following.
\begin{Proposition}
 Let $G$ be a $k$-symmetric Lie group with automorphism $\tau$ of order $k>2$. 
 If a smooth map $\varphi:M\to G$ of a Riemann surface
 $M$ is primitive with respect to $\tau$ then $\varphi$ has the form 
\begin{equation}\label{eq:varphiform}
 \varphi(z,\bar{z})=a \exp \left(\int_{z_*}^z \varPhi(t) dt
 +\int_{z_*}^{\bar z}\bar{\varPhi}(t) dt\right),
\end{equation}
 where $a\in G,$ and $\varPhi$ is holomorphic, takes 
 values in $\mathfrak g^{\mathbb C}$,
 and satisfies $[\varPhi,\bar \varPhi]=0$.
\end{Proposition}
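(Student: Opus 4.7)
The plan is to use the $\mathbb{Z}/k\mathbb{Z}$-grading of $\mathfrak{g}^{\mathbb{C}}$ together with the Maurer--Cartan equation to force the holomorphicity of $\varPhi$ and the commutator identity $[\varPhi,\bar{\varPhi}]=0$, and then to integrate in order to recover $\varphi$ via a commuting exponential.

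First, write $\alpha=\varphi^{*}\theta=\varPhi\,dz+\bar{\varPhi}\,d\bar{z}$; primitivity is precisely the statement $\varPhi\in\mathfrak{g}^{\mathbb{C}}_{-1}$. Since complex conjugation on $\mathfrak{g}^{\mathbb{C}}$ sends a $d\tau$-eigenvector of eigenvalue $\omega^{j}$ to one of eigenvalue $\overline{\omega^{j}}=\omega^{-j}$, it interchanges $\mathfrak{g}^{\mathbb{C}}_{j}$ with $\mathfrak{g}^{\mathbb{C}}_{-j}$, so $\bar{\varPhi}\in\mathfrak{g}^{\mathbb{C}}_{1}$. The grading inclusion $[\mathfrak{g}^{\mathbb{C}}_{-1},\mathfrak{g}^{\mathbb{C}}_{1}]\subset\mathfrak{g}^{\mathbb{C}}_{0}=\{0\}$ then yields $[\varPhi,\bar{\varPhi}]=0$ at once. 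Substituting this back into the Maurer--Cartan equation \eqref{eq:MCforalpha} reduces it to $\partial_{z}\bar{\varPhi}-\partial_{\bar{z}}\varPhi=0$; the two summands lie in $\mathfrak{g}^{\mathbb{C}}_{1}$ and $\mathfrak{g}^{\mathbb{C}}_{-1}$ respectively, and since $k>2$ these subspaces intersect trivially (as observed in the excerpt just before the definition of a primitive map). Hence both summands vanish separately, so $\bar{\partial}\varPhi=0$ and $\varPhi$ is holomorphic.

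To reconstruct $\varphi$, pass to a simply connected chart of $M$ (or to its universal cover) and fix a real base point $z_{*}$. Set $\Psi_{1}(z):=\int_{z_{*}}^{z}\varPhi(t)\,dt\in\mathfrak{g}^{\mathbb{C}}_{-1}$ and $\Psi_{2}(\bar{z}):=\int_{z_{*}}^{\bar{z}}\bar{\varPhi}(t)\,dt\in\mathfrak{g}^{\mathbb{C}}_{1}$; both are well defined by the holomorphicity/antiholomorphicity just established, and $\overline{\Psi_{1}}=\Psi_{2}$, so $\Psi:=\Psi_{1}+\Psi_{2}$ takes values in the real form $\mathfrak{g}$. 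Because $[\Psi_{1},\Psi_{2}]\in\mathfrak{g}^{\mathbb{C}}_{0}=0$, the two factors commute and $\exp\Psi=\exp(\Psi_{1})\exp(\Psi_{2})$ is well defined in $G$. A direct computation of the left logarithmic derivative of this product, using the cross-vanishing $[\mathfrak{g}^{\mathbb{C}}_{\pm 1},\mathfrak{g}^{\mathbb{C}}_{\mp 1}]=0$, shows $(\exp\Psi)^{*}\theta=d\Psi_{1}+d\Psi_{2}=\varPhi\,dz+\bar{\varPhi}\,d\bar{z}=\alpha$; the uniqueness (up to left translation) of the primitive of a given Maurer--Cartan datum on a simply connected base then gives $\varphi=a\cdot\exp\Psi$ for some $a\in G$, which is the asserted formula.

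The main technical point is the final pull-back identity $(\exp\Psi)^{*}\theta=d\Psi$: one has to verify that the Baker--Campbell--Hausdorff corrections to the naive formula $(\exp X)^{-1}d\exp X = dX$ contribute only through brackets that already vanish by the $\mathbb{Z}/k$-grading, so that the resulting one-form collapses back to $d\Psi$. All other steps are formal consequences of the grading together with the elementary observation that complex conjugation swaps $\mathfrak{g}^{\mathbb{C}}_{\pm 1}$.
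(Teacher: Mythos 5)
Your derivation of the differential conditions is correct and is exactly the paper's argument: primitivity places $\varPhi$ in $\mathfrak g^{\mathbb C}_{-1}$ and hence $\bar\varPhi$ in $\mathfrak g^{\mathbb C}_{1}$, the grading together with $\mathfrak g^{\mathbb C}_{0}=\{0\}$ kills $[\varPhi,\bar\varPhi]$, and splitting the reduced Maurer--Cartan equation along $\mathfrak g^{\mathbb C}_{-1}\oplus\mathfrak g^{\mathbb C}_{1}$ (using $\mathfrak g^{\mathbb C}_{1}\cap\mathfrak g^{\mathbb C}_{-1}=\{0\}$ for $k>2$) gives $\bar\partial\alpha^{\prime}=\partial\alpha^{\prime\prime}=0$. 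Up to that point you are doing what the paper does, only making explicit why conjugation swaps the eigenspaces. The difference is the last step: the paper cites Theorem \ref{thm:strongharmonic} for the integration, whereas you carry it out by hand.

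That last step is where there is a genuine gap. You correctly identify the pull-back identity $(\exp\Psi)^{*}\theta=d\Psi_{1}+d\Psi_{2}$ as the main technical point, but the justification you offer --- that the corrections ``vanish by the $\mathbb Z/k$-grading'' via $[\mathfrak g^{\mathbb C}_{\pm1},\mathfrak g^{\mathbb C}_{\mp1}]=0$ --- is not correct. The cross-vanishing only disposes of the mixed terms (such as $[\Psi_{2},\varPhi]$ and the conjugation of $(\exp\Psi_{1})^{*}\theta$ by $\exp(-\Psi_{2})$). It does nothing to the self-terms: $(\exp\Psi_{1})^{-1}\partial_{z}(\exp\Psi_{1})=\varPhi-\tfrac12[\Psi_{1},\varPhi]+\cdots$, and $[\Psi_{1},\varPhi]$ lies in $[\mathfrak g^{\mathbb C}_{-1},\mathfrak g^{\mathbb C}_{-1}]\subset\mathfrak g^{\mathbb C}_{-2}$, which the grading does not force to be zero. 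What is actually needed is $[\varPhi(z),\varPhi(w)]=0$ for all $z,w$, i.e.\ that the image of $\varPhi$ spans an abelian subalgebra, and this does not follow from $[\varPhi,\bar\varPhi]=0$ once $\dim\mathfrak g^{\mathbb C}_{-1}>1$ and $[\mathfrak g^{\mathbb C}_{-1},\mathfrak g^{\mathbb C}_{-1}]\neq0$. (For instance, the $5$-dimensional Heisenberg group carries an order-$4$ automorphism with no nonzero fixed vectors for which $\mathfrak g^{\mathbb C}_{-1}=\operatorname{span}(X_{1},X_{2})$ with $[X_{1},X_{2}]\neq 0$; taking $\varPhi(z)=X_{1}+zX_{2}$ one gets $[\Psi_{1},\varPhi]=\tfrac{z^{2}}{2}[X_{1},X_{2}]\neq0$, so $\exp(\Psi_{1}+\Psi_{2})$ does not have Maurer--Cartan form $\alpha$.) To be fair, the paper's own route through Theorem \ref{thm:strongharmonic} is equally terse on precisely this point --- that proof only records $[\int\varPhi,\int\bar\varPhi]=0$ before asserting the exponential formula --- so you have reproduced the paper's argument including its weakest link; but the reason you give for that link is wrong, and closing it requires either an additional hypothesis (e.g.\ $\mathfrak g^{\mathbb C}_{-1}$ abelian, which holds trivially when it is one-dimensional, as for $\mathrm{Sol}_3$) or a different argument.
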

\begin{proof}
 Let $\alpha = \varPhi dz + \bar \varPhi d\bar z$ be the Maurer-Cartan form of 
 the primitive map $\varphi$.
 By definition, $\varPhi\in \mathfrak{g}^{\mathbb C}_{-1}$, so we have 
 $[\varPhi,\bar \varPhi]\in \mathfrak{g}^{\mathbb C}_{0}=\{\Vec{0}\}$.
 Next $\alpha$ satisfies the Maurer-Cartan equation
\be
\bar{\partial}\alpha^{\prime}+\partial \alpha^{\prime\prime}
+[\alpha^{\prime}\wedge \alpha^{\prime\prime}]=0.
\ee
Since $\varphi$ is a primitive map, we obtain 
\be
\bar{\partial}\alpha^{\prime}+\partial \alpha^{\prime\prime}=0.
\ee
 Split this equation according to the eigenspace decomposition 
 of $\mathfrak{g}^{\mathbb{C}}$ with respect to $\tau$, we get
 $\bar{\partial}\alpha^{\prime}=\partial \alpha^{\prime \prime}=0$.
 Hence by Theorem \ref{thm:strongharmonic}, $\varphi$ has the form 
 \eqref{eq:varphiform}. 
\end{proof}
 We now answer the second question posed at the beginning of this subsection as follows:
\begin{Proposition}
 Let $(G,\tau,ds^2)$ be Lie group equipped with the 
 structure of semi-Riemannian $k$-symmetric space $(k>2)$. Assume that 
 the metric $ds^2$ is only left invariant. Then the 
 Levi-Civita connection of $ds^2$ is of the form $\nabla^\mu$ with non skew-symmetric $\mu$. 
 The admissibility condition $\{\alpha^{\prime}\wedge \alpha^{\prime\prime}\}=0$ 
 is satisfied for every primitive map from a Riemann surface $M$ into $(G,\tau,ds^2)$.
\end{Proposition}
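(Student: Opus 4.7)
The plan is to deduce the statement from two ingredients: the explicit formula for the Levi-Civita connection as $\nabla^\mu$ with $\mu(X,Y)=\tfrac{1}{2}[X,Y]+\tfrac{1}{2}\{X,Y\}$ already recorded in \eqref{Levi-CivitaRelation}, and the fact that the isometry $\tau$ preserves $\nabla$, which will force $\{\cdot,\cdot\}$ to be compatible with the $\tau$-eigenspace decomposition. The main obstacle is the second point, but it reduces to the standard observation that an isometry intertwines the Levi-Civita connection with itself.

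For the first assertion, I would argue as follows. By \eqref{eq:skew-symm}, the symmetric part of $\mu$ is $\tfrac{1}{2}\{\cdot,\cdot\}$, and by the Lemma preceding it, bi-invariance of $ds^2$ is equivalent to $\{\cdot,\cdot\}\equiv 0$, equivalently to $\nabla=\neutral$. Since the hypothesis is that $ds^2$ is only left-invariant, $\{\cdot,\cdot\}\not\equiv 0$, so $\sym\mu\neq 0$ and $\mu$ is not skew-symmetric.

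For the admissibility, I would first establish that $\{\cdot,\cdot\}$ is $\tau$-equivariant on $\mathfrak{g}$. Because $\tau$ is a Lie group automorphism, its differential acts on left-invariant vector fields by $\tau_*X=d\tau(X)$ where $X\in\mathfrak g$. Because $\tau$ is an isometry of $ds^2$, it preserves the Levi-Civita connection:
\begin{equation*}
d\tau(\nabla_XY)=\nabla_{d\tau X}(d\tau Y),\qquad X,Y\in\mathfrak g.
\end{equation*}
Symmetrizing in $X,Y$ and using the definition \eqref{NR} of the anti-commutator gives $d\tau(\{X,Y\})=\{d\tau X,d\tau Y\}$. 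Extending $\mathbb C$-linearly to $\mathfrak g^{\mathbb C}$ and testing against the eigenspace decomposition shows that $\{\mathfrak g_j^{\mathbb C},\mathfrak g_\ell^{\mathbb C}\}\subset\mathfrak g_{j+\ell \bmod k}^{\mathbb C}$, exactly as happens with the bracket.

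Finally, let $\varphi:M\to G$ be primitive, so $\alpha'=\varPhi\,dz$ with $\varPhi\in\mathfrak g_{-1}^{\mathbb C}$ and hence, after conjugation, $\alpha''=\bar\varPhi\,d\bar z$ with $\bar\varPhi\in\mathfrak g_{1}^{\mathbb C}$ (using that $d\tau$ is real, so complex conjugation swaps $\mathfrak g_{-1}^{\mathbb C}$ and $\mathfrak g_{1}^{\mathbb C}$). The equivariance just established then places $\{\varPhi,\bar\varPhi\}$ in $\mathfrak g_{0}^{\mathbb C}=\{\mathbf 0\}$, and since $\{\alpha'\wedge\alpha''\}$ is, up to the standard $2$-form factor, built from $\{\varPhi,\bar\varPhi\}$, it vanishes identically. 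This is the required admissibility condition \eqref{eq:admMetricLie}.
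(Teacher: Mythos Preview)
Your argument is correct and is precisely the reasoning the paper leaves implicit (the Proposition is stated without a written proof). The paper's preceding Proposition uses the grading $[\mathfrak g_j^{\mathbb C},\mathfrak g_\ell^{\mathbb C}]\subset\mathfrak g_{j+\ell}^{\mathbb C}$, which is automatic since $\tau$ is a Lie group automorphism; your contribution is to observe that because $\tau$ is in addition an isometry, it also preserves the Levi-Civita connection and hence the anticommutator $\{\cdot,\cdot\}$, yielding the same grading for $\{\cdot,\cdot\}$ and thus $\{\varPhi,\bar\varPhi\}\in\mathfrak g_0^{\mathbb C}=\{\mathbf 0\}$. This is exactly what the reader is meant to supply.
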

 For example, the space $\mathrm{Sol}_3$, which will be defined 
 in Section \ref{subsc:Sol3}, 
 has the structure of a Riemannian $4$-symmetric space \cite{Kowalski}, 
 see Section \ref{subsc:primitive}. 
 Thus every primitive map of a Riemann surface into $\mathrm{Sol}_3$ 
 is harmonic with respect to 
 its standard left invariant metric $ds^2$ and satisfies the admissibility condition. 
\section{Basic Examples}\label{sc:BasicExamples}
 In the discussions of the previous sections we encountered several 
 interesting basic types of 
 classes of affine harmonic maps into  Lie groups $G$. 
\begin{enumerate}
 \item The class of $\neutral$-harmonic maps into $G$.

\item The class of admissible harmonic maps with respect to a (fixed)
 left-invariant metric.

\item The class of torsion-free $\nt$-harmonic maps for any  
      $t \in \mathbb R \setminus \{0\}$.
\end{enumerate}
 All harmonic maps of these classes admit zero-curvature representations.

 In the first case, the Lie group $G$ is regarded 
 as an \textit{affine symmetric space} 
 $(G\times{G}/G,\neutral)$. This case contains the Uhlenbeck-Segal theory, 
 when $G$ is compact and semi-simple. In fact, if $G$ is compact and 
 semi-simple, 
 $\neutral$ coincides with the Levi-Civita connection of 
 the (actually bi-invariant) Killing metric. 

 If  $G$ is not semi-simple, $G$ may not have any  bi-invariant 
 semi-Riemannian metric (like in the case of the Heisenberg group, see below).
 Even if we equip $G$ with a left-invariant metric, $\neutral$-harmonic 
 maps are 
 not necessarily harmonic with respect to the Levi-Civita connection, see Section \ref{subsc:Nil}.
 Only in the bi-invariant case these coincide.

 In the second case, unfortunately, we 
 only know very simple examples, like 
 the map given by \eqref{eq:varPhi} 
 and the primitive harmonic maps discussed in 
 Section \ref{subsc:admissibility}.
 
 In the third case, torsion-free $\nt$-harmonic maps $(t \neq 0)$, 
 we need to demand the additional condition ``torsion-free'', 
 that is, $[\alpha^\prime\wedge \alpha^{\prime\prime}]=0$, 
 otherwise we do not  obtain a zero curvature representation.
 Torsion-free $\nt$-harmonic maps have the advantage that 
 they are harmonic with respect to any left-invariant metric.
 However, unfortunately, the torsion-free condition is a very 
 strong restriction on $\nt$-harmonic maps. In fact such maps 
 have been classified in Theorem \ref{thm:strongharmonic}.

 Therefore the first case is the only candidate 
 for a generalization of the Uhlenbeck-Segal theory.
 Among all Lie groups, two particularly interesting cases occur, 
 the semi-simple Lie groups and the solvable Lie groups. 
 Since the semi-simple case has been studied intensively already, 
 in this section we exhibit some interesting examples 
 of solvable Lie groups. 
 We will start, however, with a completely general example.

\subsection{An equivariant map}
 Let $G$ be any Lie group and $X,Y$ any pair of vectors in $\mathfrak{g}$.
 We define a map $\varphi:\mathbb{C}\to G$ by 
\be
\varphi(x,y)=\exp (xX)\exp(yY).
\ee
 Then the Maurer-Cartan form $\alpha$ of $\varphi$ is given by
 $\alpha =\ad (\exp(-yY))X dx + Ydy  = \alpha' + \alpha'',$ where
 $\alpha' = \frac{1}{2}( \ad (\exp(-yY))X -\sqrt{-1} Y)dz$ and $z = x + \sqrt{-1} y$ 
 is a conformal coordinate.
 From this we get 
 $\partial\alpha^{\prime \prime} 
 - \bar \partial \alpha^{\prime}=0$. 
 Hence, by  (\ref{eq:mu-harmonic})  we infer that $\varphi$ is $\neutral$-harmonic. 
 When $G$ admits a bi-invariant semi-Riemannian 
 metric $ds^2=\langle\cdot, \cdot\rangle$, then 
 $\varphi$ is harmonic and the induced metric is computed as
\be
 \langle X,X\rangle dx^2+2\langle X,Y \rangle dxdy+
 \langle Y,Y\rangle dy^2.
\ee
 If in addition we take $X$, $Y$ so that 
 $\langle X,X\rangle=\langle Y,Y\rangle>0$
 and $\langle X,Y\rangle=0$, then $\varphi$ 
 is a conformally harmonic immersion.
 For $t\not=0$, 
 $\varphi$ is torsion free ${}^{(t)}\nabla$-harmonic 
 if and only if $[X,Y]=0$, see
 Theorem \ref{thm:strongharmonic}.
\begin{Remark}
 When $[X, Y]=0$, $\varphi(x, y) = \exp (x X) \exp(y Y)$
 is called a {\it vacuum solution}, \cite{BP}.
\end{Remark}

\subsection{The Heisenberg group}\label{subsc:Nil}
 We consider the $(2n+1)$-dimensional Heisenberg group 
 $\mathrm{Nil}_{2n+1}$ in $\mathbb{R}^{2n+1}$ with multiplication
\begin{align*}
(x^1,\cdots,x^{2n},x^{2n+1})&\cdot
(\tilde{x}^1,\cdots,\tilde{x}^{2n},\tilde{x}^{2n+1}) \\
&=\left(x^1+\tilde{x}^1,\cdots,x^{2n}+\tilde{x}^{2n},
x^{2n+1}+\tilde{x}^{2n+1}+\frac{1}{2}\sum_{i=1}^{n}
(x^{i}\tilde{x}^{n+i}-\tilde{x}^{i}x^{n+i})\right).
\end{align*}
 The natural left-invariant metric is 
\be
ds^2=\sum_{i=1}^{2n}(dx^i)^2+
 \left\{dx^{2n+1}+\sum_{i=1}^{n}(x^{n+i}dx^i-x^idx^{n+i})
 \right\}^2.
\ee
 It is known that $\mathrm{Nil}_{2n+1}$
 has no bi-invariant metric \cite{MR}.
 For a smooth map $\varphi=(\varphi^1, \dots, \varphi^{2n+1}):
 \mathbb C \to \mathrm{Nil}_{2n+1}$, the $\neutral$-harmonicity 
 equation is 
\be
 \varphi^j_{z \bar z} =0,\;\;(1 \leqq j \leqq 2 n), \;\;
 \left\{\varphi^{2n+1}
 +\frac{1}{2}\left(\sum_{i=1}^n\varphi^i\varphi^{n+i}\right)
 \right\}_{z\bar{z}}
 -\frac{1}{2}\sum_{i=1}^n\left(\varphi^i_z\varphi^{n+i}_{\bar z}
 +\varphi^i_{\bar z}\varphi^{n+i}_z\right)=0.
\ee
 This system is equivalent to 
 $\varphi^1_{z\bar{z}}=\varphi^2_{z\bar{z}}
 =\cdots = \varphi^{2n +1}_{z\bar{z}}=0$.
 Thus every $\neutral$-harmonic map can be 
 represented in the form
\be
\varphi= (\varphi^1,\dots \varphi^{2n+1}) =
 (2 \Re f^1, \dots,2 \Re f^{2n+1}),
\ee
 where $f^1, \dots, f^{2n +1}$ are holomorphic functions.

 Conversely, any such $\varphi$ is a $\neutral$-harmonic map.
 For example, we consider $n=1$ and 
 take $ f^1(z)=\frac{1}{2}z, f^{2}(z)=-\frac{\sqrt{-1}}{2}z$ and 
 $f^{3}(z)=-\frac{\sqrt{-1}}{8}z^2$, where $z = x + \sqrt{-1} y$.
 Then the resulting map
 $\varphi=(x,y,\tfrac{1}{2}xy)$ 
 is a $\neutral$-harmonic map and this map gives 
 a hyperbolic paraboloid $x^3=\tfrac{1}{2} x^1x^2$ 
 which is a standard example 
 of minimal surfaces in $\mathrm{Nil}_3$ with respect to the 
 canonical left-invariant metric, see \cite{DIK2}.
 Note that the hyperbolic paraboloid is represented as
\be
 \varphi(x, y) = 
\exp \left( x 
 \begin{pmatrix} 
 0 & 1 & 0\\  0 & 0 & 0\\  0 & 0 &0 \\ 
 \end{pmatrix}
\right)
\exp \left(y
 \begin{pmatrix} 
 0 & 0 & 0\\  0 & 0 & 1\\  0 & 0 &0 \\ 
\end{pmatrix}
\right).
\ee
 The canonical connection $\can$ coincides with 
 the \textit{Tanaka-Webster connection} in CR-geometry. 
 In \cite{CIL} the following result was obtained.
\begin{Proposition}
 Let $(M,g)$ be a Riemannian $2$-manifold and $\varphi:M\to 
 (\mathrm{Nil}_3,ds^2)$  an isometric immersion. 
 Then $\varphi$ is $\can$-harmonic if and only if $\varphi$ is 
 locally congruent to the vertical plane. In particular 
 $\varphi$ is harmonic with 
 respect to the canonical left-invariant metric. 
\end{Proposition}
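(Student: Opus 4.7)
My plan is to reduce the $\can$-harmonicity condition to a tractable system of PDEs on the components of the Maurer-Cartan form, and then exploit the rigidity forced by the conformality identity coming from the isometric-immersion hypothesis.

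First I will work in the standard left-invariant orthonormal coframe $\{\theta^1,\theta^2,\theta^3\}$ on $\mathrm{Nil}_3$, which satisfies $d\theta^1=d\theta^2=0$ and $d\theta^3=-\theta^1\wedge\theta^2$. Because all left-invariant vector fields are $\can$-parallel, the computation already carried out in the proof of Proposition \ref{prop:affine-harmonicity} (specialized to $\mu=0$) yields $\theta(\tau(\varphi,\can))=*d*\alpha$, so $\can$-harmonicity is equivalent to each component $\omega^i:=\varphi^{*}\theta^i$ being co-closed. Pulling back the Maurer-Cartan equations, this forces $\omega^1$ and $\omega^2$ to be \emph{harmonic} $1$-forms on $M$, while $\omega^3$ is a co-closed $1$-form with $d\omega^3=-\omega^1\wedge\omega^2$.

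Next I will introduce a local conformal coordinate $z$ with $g=e^{2u}|dz|^2$ and write $\omega^i=w_i\,dz+\bar w_i\,d\bar z$. Harmonicity of $\omega^1,\omega^2$ becomes holomorphicity of $w_1,w_2$, and the two conditions on $\omega^3$ combine (using $*dz=-\sqrt{-1}dz$, $*d\bar z=\sqrt{-1}d\bar z$) to
\be
(w_3)_{\bar z}=\tfrac12\bigl(w_1\bar w_2-\bar w_1 w_2\bigr).
\ee
The isometric-immersion hypothesis $\varphi^{*}ds^2=g$ separates into the conformality relation $w_1^2+w_2^2+w_3^2=0$ together with the normalization $|w_1|^2+|w_2|^2+|w_3|^2=\tfrac12 e^{2u}$. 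The key manipulation is then to differentiate conformality in $\bar z$: holomorphicity of $w_1,w_2$ leaves only $w_3(w_3)_{\bar z}=0$.

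The delicate step is to rule out a persistent vanishing of $w_3$. If $w_3\equiv0$ on a nonempty open set, conformality forces $w_1=\pm\sqrt{-1}\,w_2$ on that set, hence on all of $M$ by analytic continuation, and then the compatibility formula for $(w_3)_{\bar z}$ gives $(w_3)_{\bar z}=\pm\sqrt{-1}|w_2|^2$; combined with $w_3=0$ on the open set this forces $w_2\equiv0$, hence $w_1\equiv0$, contradicting the immersion property. Consequently $w_3$ is nonzero on an open dense set, where $(w_3)_{\bar z}=0$ and therefore $\Im(w_1\bar w_2)=0$; the identity principle applied to the meromorphic quotient $w_1/w_2$ then promotes this to $w_1=c\,w_2$ globally for some $c\in\mathbb R$. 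Since $\theta^1=dx^1$, $\theta^2=dx^2$ in ambient coordinates, $\omega^1=d\varphi^1$ and $\omega^2=d\varphi^2$, so $\varphi^1-c\varphi^2\equiv\mathrm{const}$ and the image lies in a vertical plane. A left translation and a rotation in the $(x^1,x^2)$-plane, both ambient isometries, normalize this plane to $\{x^2=0\}$, giving the asserted local congruence; the ``in particular'' clause then follows because the standard vertical plane is totally geodesic (hence minimal, hence classically harmonic) in $(\mathrm{Nil}_3,ds^2)$. The main obstacle I anticipate is the case-splitting above, which requires careful use of analytic continuation and density to avoid being trapped in the degenerate stratum $\{w_3=0\}$.
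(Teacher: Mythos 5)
The paper itself does not prove this Proposition; it simply quotes the result from \cite{CIL}, so your argument has to be judged on its own merits. The main chain of reasoning is sound and is the natural one: reducing $\can$-harmonicity to co-closedness of the components $\omega^{i}=\varphi^{*}\theta^{i}$ via the formula $\theta(\tau(\varphi,\can))=*d*\alpha$, combining this with the pulled-back structure equations to obtain holomorphicity of $w_1,w_2$ and the $\bar\partial$-equation for $w_3$, differentiating the conformality relation to get $w_3(w_3)_{\bar z}=0$, and the case analysis excluding $w_3\equiv 0$ on an open set are all correct. Two small loose ends should be tied up: the degenerate case $w_2\equiv 0$, where the quotient $w_1/w_2$ is not available but the conclusion is immediate since the image then lies in $\{x^2=\mathrm{const}\}$; and the converse implication (an isometric immersion onto a vertical plane is $\can$-harmonic), which you leave implicit but which follows at once since such a map is a local isometry onto a flat plane, so all three functions $\varphi^{i}$ are harmonic on $(M,g)$.

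The one genuine error is in your justification of the final clause: the vertical plane is \emph{not} totally geodesic in $(\mathrm{Nil}_3,ds^2)$; in fact $\mathrm{Nil}_3$ admits no totally geodesic surfaces at all. In the left-invariant orthonormal frame $e_1,e_2,e_3$ dual to $\theta^1,\theta^2,\theta^3$ one has $[e_1,e_2]=c\,e_3$ with $c\neq 0$, and the Koszul formula gives $\langle\nabla_{e_1}e_3,e_2\rangle=-\tfrac{1}{2}\langle[e_1,e_2],e_3\rangle\neq 0$; hence the second fundamental form of $\{x^2=0\}$ with respect to the unit normal $e_2$ satisfies $h(e_1,e_1)=h(e_3,e_3)=0$ but $h(e_1,e_3)\neq 0$. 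The plane is therefore minimal (the trace of $h$ vanishes) without being totally geodesic, and minimality is exactly what is needed to conclude that an isometric immersion onto it is harmonic for the Levi-Civita connection. So the ``in particular'' statement remains true, but it requires this direct verification of minimality rather than an appeal to total geodesy, which fails here.
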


For instance, the vertical plane $x_2=0$ in $\mathrm{Nil}_3$ 
is represented as 
\be
\varphi(z,\bar{z})=\exp(z\varPhi+\bar{z}\bar{\varPhi}),
\ \ 
\varPhi=\frac{1}{2}
\left(
\begin{array}{ccc}
0 & 1 & -\sqrt{-1}\\
0 & 0 & 0\\
0 & 0 & 0
\end{array}
\right).
\ee
 One can easily check that $[\varPhi,\bar {\varPhi}]=0$.
 Hence the vertical plane is a torsion-free $\can$-harmonic surface.
\subsection{An interesting family of solvable Lie groups}
\label{subsc:Sol3}
 Let us consider the following $2$-parameter 
 family, $(\mu_1,\mu_2) \in \mathbb{R}^2 \setminus (0,0)$, of $3$-dimensional
 Lie groups:
\begin{equation}\label{eq:solvable}
G(\mu_1,\mu_2)=\left\{
(x^1,x^2,x^3)=\left(
\begin{array}{cccc}
 e^{\mu_1\>x^3} & 0 & x^1\\
 0 & e^{\mu_2\>x^3} & x^2\\
 0 & 0 &1 
\end{array}
\right)
\right\}
\subset 
\mathrm{GL}_{3}\mathbb{R}.
\end{equation}
 The Lie group $G(\mu_1,\mu_2)$ is solvable 
 and non-unimodular if $\mu_1+\mu_2\not=0$. 
 Note that $G(\mu_1, \mu_2)$ is originally defined as 
 $\mathbb R^3 (x^1, x^2, x^3)$ with multiplication law: 
\be
 (x^1, x^2, x^3) \cdot (\hat x^{1}, \hat x^{2}, \hat x^{3}) =
 ( x^1 + e^{\mu_1 x^3} \hat x^{1}, x^2 + e^{\mu_2 x^3} \hat x^{2}, x^3
 + \hat x^{3})
\ee
 for any $(\mu_1, \mu_2) \in \mathbb R^2$. In case $(\mu_1, \mu_2) \neq (0, 0)$, 
 $G(\mu_1, \mu_2)$ is realized as the matrix Lie group given by \eqref{eq:solvable}.
 We note that in case $\mu_1 = \mu_2=0$, $G(\mu_1, \mu_2)$ 
 is just the abelian group $\mathbb R^3$ 
 and one can not realize this group as a matrix group in 
 \eqref{eq:solvable}.
 The $\neutral$-harmonic map equation for 
 $\varphi=(\varphi^1,\varphi^2,\varphi^3)$ is 
\begin{equation}\label{sol-0-harm}
\varphi^{k}_{z\bar{z}}-\frac{1}{2}\mu_{k}
(\varphi^k_z\varphi^3_{\bar z}+\varphi^k_{\bar z}\varphi^3_z)=0, \>(k=1,2),
\ \ \varphi^3_{z\bar{z}}=0.
\end{equation}
  We equip $G(\mu_1,\mu_2)$ with the left-invariant metric
\be
 ds^2 =e^{-2\mu_1\>x^3}(dx^1)^2+e^{-2\mu_2\>x^3}(dx^2)^2+(dx^3)^2.
\ee

 The resulting family of Riemannian homogeneous spaces includes
 Euclidean $3$-space $\mathbb{E}^3=G(0,0)$, hyperbolic $3$-space 
 $\mathbb{H}^3(-c^2)=G(c,c)$ of curvature 
 $-c^2<0$, the model space $\mathrm{Sol}_3=G(1,-1)$ 
 of \textit{solvgeometry}  
 in the sense of Thurston and the Riemannian product 
 $\mathbb{H}^{2}(-c^2)\times \mathbb{E}^1
 =G(0,c)$. 
 We also note that for a group $G(\mu_1,\mu_2)$  the metric introduced above is  bi-invariant if and 
 only if $\mu_1 = \mu_2 =0$.
 In general, the harmonic map equation with respect to this metric is
\begin{equation}\label{sol-harm}
\varphi^{k}_{z\bar{z}}-\mu_{k}
(\varphi^k_z\varphi^3_{\bar z}+\varphi^k_{\bar z}\varphi^3_z)=0, \;(k=1,2), \;
\ \ \varphi^3_{z\bar z}+\sum_{k=1}^{2}\mu_{k}
e^{-2\mu_k\>\varphi^3}\varphi^{k}_{z}\varphi^{k}_{\bar z}=0.
\end{equation}
\subsection{Primitive maps into Sol$_3$}\label{subsc:primitive}
 Let $G = \mathrm{Sol}_3$ and 
 $\mathfrak g = \mathrm{Lie} (\mathrm{Sol}_3)$.
 Then 
 $(G, \tau, ds^2)$ is a Riemannian $4$-symmetric space 
 with automorphism $\tau$ defined by 
 $\tau (x^1, x^2, x^3) = (-x^2, x^1, -x^3)$.
 Since the primitive $4$-th root
 of unity is $\omega = \sqrt{-1}$,
 the eigenvalues of $d \tau$ are
 $\pm 1$ and $\pm{\sqrt{-1}}$. 
 One can determine the eigenspaces
 of ${\mathfrak g}^\mathbb{C}$
 corresponding to $\omega^j,  (j = 0, 1, 2, -1)$, explicitly:
\be
{\mathfrak g}^\mathbb{C}_{0}=\{{\mathbf 0}\},\ 
{\mathfrak g}^\mathbb{C}_{1}=
{\mathbb C}
\left[
\begin{array}{c}
1 \\ -\sqrt{-1} \\ 0
\end{array}
\right],
\ \ 
{\mathfrak g}^\mathbb{C}_{2}
 =
\left[
\begin{array}{c}
0 \\ 0 \\ 1
\end{array}
\right]
,\ \
{\mathfrak g}^\mathbb{C}_{-1}=
{\mathbb C}
\left[
\begin{array}{c}
1 \\ \sqrt{-1} \\ 0
\end{array}
\right].
\ee
Direct computations show that
$\varphi=(\varphi^1,\varphi^2,\varphi^3)
:{\mathbb D}\rightarrow \mathrm{Sol}_3$ is primitive if and only if
\be
\varphi^{3}_z=0,\  \ 
\sqrt{-1}e^{-\varphi^3}\varphi^{1}_z=e^{\varphi^3}\>\varphi^{2}_z.
\ee
The first equation implies that $x^3$ is constant.
The second equation is rewritten as
\be
\frac{\partial}{\partial z}
\left(
\varphi^1 - \sqrt{-1}e^{2\varphi^3}\varphi^2
\right)=0.
\ee
 Thus we obtain the following.
\begin{Proposition}
 Let $\varphi:\mathbb{D}\rightarrow G$
 be a smooth map into $\mathrm{Sol}_3$. Then $\varphi$ is a primitive map if and only if
 $\varphi^{3}=\textrm{constant}$ and $\varphi^1$ and ${e}^{2\varphi^3} \varphi^2$ 
 are conjugate harmonic functions.
\end{Proposition}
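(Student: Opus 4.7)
The plan is to unwind the primitivity condition by an explicit computation of the pulled back Maurer--Cartan form in the matrix realization of $\mathrm{Sol}_3 = G(1,-1)$ given in \eqref{eq:solvable}, project onto the eigenspace $\mathfrak{g}^{\mathbb{C}}_{-1}$ tabulated just before the statement, and then exploit the reality of $\varphi^3$ to collapse one of the resulting scalar equations to ``$\varphi^3$ is constant''.

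Concretely, I would first fix the basis $(E_1,E_2,E_3)$ of $\mathfrak{sol}_3$ used implicitly by the column-vector presentation of the eigenspaces; $E_1, E_2$ are the obvious strictly upper triangular generators and $E_3=\operatorname{diag}(1,-1,0)$. A direct matrix calculation of $g^{-1}dg$ for a general $g=(x^1,x^2,x^3)\in \mathrm{Sol}_3$ identifies the left Maurer--Cartan form, in these coordinates of $\mathfrak{g}$, with $(e^{-x^3}dx^1,\, e^{x^3}dx^2,\, dx^3)$. Pulling back by $\varphi$ and taking the $(1,0)$-part with respect to a conformal coordinate $z$ on $\mathbb{D}$ then gives
\[
\alpha' = \bigl(e^{-\varphi^3}\varphi^1_z,\; e^{\varphi^3}\varphi^2_z,\; \varphi^3_z\bigr)\,dz.
\]

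Next I would impose primitivity, namely $\alpha'(T^{(1,0)}\mathbb{D})\subset \mathfrak{g}^{\mathbb{C}}_{-1}=\mathbb{C}(1,\sqrt{-1},0)$. Matching components coefficient-by-coefficient against a generator $\lambda(1,\sqrt{-1},0)$ yields precisely the two scalar equations $\varphi^3_z=0$ and $\sqrt{-1}\,e^{-\varphi^3}\varphi^1_z = e^{\varphi^3}\varphi^2_z$ quoted right before the statement. Since $\varphi^3$ is real, conjugating $\varphi^3_z=0$ gives $\varphi^3_{\bar z}=0$ as well, so $d\varphi^3\equiv 0$ and $\varphi^3$ is (locally) constant on the connected domain $\mathbb{D}$. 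This reality trick is the one genuine content step: the eigenspace condition alone only produces holomorphicity of $\varphi^3$, and it is the fact that $\varphi^3$ is real-valued that promotes this to constancy.

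With $\varphi^3$ constant, $e^{2\varphi^3}$ passes through $\partial_z$ freely, and multiplying the second equation by $e^{\varphi^3}$ rewrites it as $\partial_z\bigl(\varphi^1-\sqrt{-1}\,e^{2\varphi^3}\varphi^2\bigr)=0$, i.e.\ the Cauchy--Riemann equations for the pair of real functions $\varphi^1$ and $e^{2\varphi^3}\varphi^2$; every step is reversible, so this proves both directions of the equivalence. The only real obstacle is bookkeeping: choosing a basis of $\mathfrak{sol}_3$ consistent with the column-vector presentation of the eigenspaces of $d\tau$, and tracking the factors of $\sqrt{-1}$ so that matching against $\mathbb{C}(1,\sqrt{-1},0)$ produces exactly the claimed conjugate harmonic relation between $\varphi^1$ and $e^{2\varphi^3}\varphi^2$.
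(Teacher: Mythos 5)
Your proposal is correct and takes essentially the same route as the paper: an explicit computation of $\varphi^{*}\theta$ in the matrix realization of $G(1,-1)$, projection onto $\mathfrak{g}^{\mathbb C}_{-1}$ to obtain $\varphi^3_z=0$ and $\sqrt{-1}\,e^{-\varphi^3}\varphi^1_z=e^{\varphi^3}\varphi^2_z$, constancy of $\varphi^3$ from its reality, and the Cauchy--Riemann reformulation of the second equation. The paper compresses all of this into ``direct computations show''; your write-up supplies exactly those computations, including the one genuinely necessary observation that the reality of $\varphi^3$ upgrades holomorphicity to constancy.
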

%

\subsection{Generalization of Sol$_3$}
 The following space is regarded as a higher dimensional 
 generalization of $\mathrm{Sol}_3$.
 Let us consider the solvable Lie group
\be
 G_n=\left\{(w, u^1, \dots, u^n, v^1, \dots, v^n)=
\left(
\begin{array}{ccccc}
e^{t} & 0 & \cdots &0 & w\\
0 & e^{v^1} & \cdots & 0 & u^1\\
\vdots & \vdots & \ddots & \vdots & \vdots\\
0 & 0 &\cdots &  e^{v^n} & u^n\\
0 & 0 &\cdots &  0 &1
\end{array}
\right)
\right\}
\subset 
\mathrm{SL}_{n+2}\mathbb{R}, 
\ee
 where $t=-(v^1+v^2+\cdots+v^n)$. 
 For $n=1$, setting $(x_1,x_2,x_3):=(w,u^1,-v^1)$,
 we obtain $G_1=\mathrm{Sol}_3$.
 The $\neutral$-harmonic map equation for $\varphi:\mathbb{C}\to G_n$ is 
\be
 v^{k}_{z\bar z}=0, \;
 u^{k}_{z\bar z}=\frac{1}{2}(u^k_zv^k_{\bar z}+u^k_{\bar z}v^k_{z}),
 \;(k =1, \cdots n),\;\;
 w_{z\bar z}=-\frac{1}{2}
\sum_{k=1}^{n}
 \left(
 w_{\bar z} v^k_{z}
 +w_{z} v^{k}_{\bar z}
 \right).
\ee
 This space equipped with a left-invariant metric
\be
 ds^2 = \sum_{k=1}^{n}e^{-2v_k}(du^k)^2+\sum _{i,j=1}^{n}dv^idv^j+e^{-2t}(dw)^2
\ee
 is a $(2n+1)$-dimensional Riemannian $(2n+2)$-symmetric 
 space \cite{Bozek, Kowalski}. 

\subsection{The Euclidean motion group}
 The motion group of the Euclidean plane $\mathbb{E}^2$ is 
\be
\widetilde{\mathrm{SE}_2}=
\left\{ (x^1, x^2, x^3) =
\left(
\begin{array}{ccc}
\cos x^3& -\sin x^3& x^1\\
\sin x^3& \cos x^3& x^2\\
0 & 0 & 1 
\end{array}
\right)
\right\}
\subset
\mathrm{SL}_{3}\mathbb{R}. 
\ee
 The $\neutral$-harmonic map equation for 
 $\varphi=(\varphi^1,\varphi^2,\varphi^3):\mathbb{C}\to \mathrm{SE}_2$ 
 is the system: 
\begin{equation}\label{eq:SE2}
 \varphi^1_{z\bar z}+\frac{1}{2}(\varphi^2_{z}\varphi^3_{\bar z}+\varphi^2_{\bar z}\varphi^3_z)=0,\;
\varphi^2_{z\bar z}-\frac{1}{2}(\varphi^1_{z}\varphi^3_{\bar z}+\varphi^1_{\bar z}\varphi^3_z)=0,\;
 \varphi^3_{z\bar z}=0.
\end{equation}
 The standard left-invariant metric of $\mathrm{SE}_2$ is the flat one 
 $ds^2 = (dx^1)^2+(dx^2)^2+(dx^3)^2$. With respect to this flat metric, 
 the harmonic map equation for $\varphi$ is the system:
\be
 \varphi^1_{z\bar z}=\varphi^2_{z\bar z}=\varphi^3_{z\bar z}=0.
\ee
 It is known that $\mathrm{SE}_2$ 
 has no bi-invariant semi-Riemannian metric.
 \begin{Remark}
 The $3$-dimensional Lie groups with  left-invariant Riemannian
 metric have been classified by Milnor \cite{Milnor}. Corresponding 
 results for left-invariant Lorentzian metrics are given in \cite{CP}. 
 Combining these two papers, we obtain the complete list of $3$-dimensional 
 Lie groups with bi-invariant semi-Riemannian metrics.
 \end{Remark}
 
\section{Generalized Weierstrass type representation} 
\label{sc:DPW}
In this section we give the generalized Weierstrass type representation
 for neutral harmonic maps from a Riemann surface into any Lie group $G$.
 We assume that the Lie group $G$
 is a connected real analytic Lie group which 
 admits a faithful finite dimensional representation. 
 Moreover, where it is convenient, we will assume without loss of generality by  
 \cite{Hochschild, Hochschild-Comp} that $G$ is embedded into its linear 
 complexification $G^\C$ and that $G^\C$ is simply-connected.

\subsection{Reductive decompositions of $G$}
 Let $G$ be a real Lie group as before. 
 Then the product Lie group 
 $G\times{G}$ acts transitively on $G$ by 
\be
 (a,b)\cdot g=agb^{-1}.
\ee
 The isotropy subgroup of $G\times{G}$ at the unit element is the 
 diagonal subgroup 
\be
 \triangle=\{(a,a)\in G\times G\ \vert \ a\in G \}.
\ee
 The homogeneous space $ G \cong G\times{G}/\triangle$ is reductive. 
 In fact there are three standard reductive decompositions 
 of the Lie algebra $\mathfrak{g}\oplus \mathfrak{g}$ of $G \times G$:
\begin{align*}
\mathfrak{g}\oplus\mathfrak{g}&=\mathfrak{d}\oplus \mathfrak{p}_{+}, \quad
\mathfrak{p}_{+}=\{(0,X) \ \vert \ X \in \mathfrak{g}\},
\\
\mathfrak{g}\oplus\mathfrak{g}&=\mathfrak{d}\oplus \mathfrak{p}_{-}, \quad
\mathfrak{p}_{-}=\{(X,0) \ \vert \ X \in \mathfrak{g}\},
\\
\mathfrak{g}\oplus\mathfrak{g}&=\mathfrak{d}\oplus \mathfrak{p}_{0},\quad
\mathfrak{p}_{0}=\{(X,-X) \ \vert \ X \in \mathfrak{g}\},
\end{align*}
 where $\mathfrak{d}$ is the Lie algebra of $\triangle$. 
 The canonical connections of $G=G\times{G}/\triangle$ with respect to 
 these reductive decompositions, taking $\mathfrak{p}_*, * \in \{ +,-,0\},$ are $\anti$, 
 $\can$ and $\neutral$, respectively, see \cite[pp.198--199]{KN2}. 
 Among these three reductive decompositions, only
 $\mathfrak{g}\oplus\mathfrak{g}=\mathfrak{d}\oplus \mathfrak{p}_{0}$
 defines a symmetric pair $(\mathfrak{g}\oplus\mathfrak{g},\mathfrak{d})$.
 The corresponding involution $\sigma$ of $G\times{G}$ is 
 given by 
\begin{equation}\label{sigma}
 \sigma(a,b)=(b,a).
\end{equation}
 Moreover, the projection $\pi : G \times G \to G$ is given 
 by $\pi (g, h) =g h^{-1}$.
 Since symmetric spaces are particularly convenient for the loop group 
 method discussed below, we will  consider from now on exclusively the third reductive 
 decomposition.
\begin{Remark}
 From another point of view, the neutral connection $\neutral$  
 is necessary for the generalized Weierstrass type representation:
 Our goal is to consider connections $\nabla^\mu$ on $G$
 for which every $\nabla^\mu$-harmonic map is admissible. 
 We have shown in Proposition \ref{pro:admissibleandmu} 
 that each such connection comes from 
 a skew-symmetric $\mu$. But in $(3)$ of Lemma \ref{lem:skewconnection} 
  we have seen that  a map is  $\nabla^\mu$ 
 harmonic if and only if it is ${}^\dag\nm$-harmonic. Finally, 
 from $(2)$ of Lemma \ref{lem:skewconnection} 
 we know that for skew-symmetric $\mu$ we have ${}^\dag\nm_X Y=
 \neutral_X Y := \frac{1}{2}[X, Y]$.
\end{Remark}
\subsection{Flat connections}
 As we have seen in the preceding sections, a map 
 $\varphi : \mathbb D  \to G$ from a simply-connected 
 domain  $\mathbb D \subset \mathbb C$ is $\neutral$-harmonic 
 if and only if $d + \alpha_{\lambda}$ is a family of 
 flat connections for all $\lambda \in \mathbb S^1$, 
 where 
\begin{equation}\label{eq:alphalambda}
 \alpha =\varphi^{*}\theta  = \alpha^{\prime} + \alpha^{\prime \prime}\;\; \mbox{and} \;\;
 \alpha_{\lambda} = \frac{1}{2}(1- \lambda^{-1}) \alpha^{\prime}+ 
 \frac{1}{2}(1- \lambda) \alpha^{\prime \prime}.
\end{equation}
 First we note that the map $\varphi$ can be (globally) lifted to the frame 
\be
 \mathcal F : \mathbb D \to \G:=G \times G, 
 \;\; p \mapsto (\id, \varphi(p)),
\ee
 where $\id$ denotes the unit of $G$.
 As a consequence 
\begin{equation}\label{eq:doubleMC}
 \mathcal A =\mathcal F^{*}\theta_{\mathcal{G}}  = 
 (0,  \varphi^{*}\theta)  = ( 0, \alpha).
\end{equation}
 Here $\theta_{\mathcal G}$ is the left Maurer-Cartan 
 form of $\mathcal{G}=G\times{G}$.
 Next we decompose $\mathcal A$ with respect to the eigenspaces
 of $d \sigma$. It is easy to verify that the fixed point algebra $\mathcal K$ 
 of $d \sigma$ and the eigenspace $\mathcal P$ 
 for the eigenvalue $-1$ are 
 $\K = \{(X, X)\;|\; X \in \mathfrak g\}$ and 
 $\P = \{(Y, -Y)\;|\; Y \in \mathfrak g\}$.
 As a consequence 
\be
 \A = \A_{\K} +  \A_{\P},
\ee
 where $\A_{\K}= \tfrac{1}{2}(\alpha, \alpha)$
 and $\A_{\P} = \tfrac{1}{2}(-\alpha, \alpha)$.
 After decomposing further we obtain
\be
 \A_{\P} 
 = \A_{\P}^{\prime} + \A_{\P}^{\prime \prime},
\ee
 where $\A_{\P}^{\prime}$ is a $(1, 0)$-form and 
 $\A_{\P}^{\prime \prime}$ is a $(0, 1)$-form.
 We now introduce $\lambda$ as usual:
\begin{equation}\label{eq:Alambda}
 \A_{\lambda} = 
  \lambda^{-1} \A_{\P}^{\prime} 
 + \A_{\K} 
 + \lambda \A_{\P}^{\prime \prime}, \;\;\lambda \in \mathbb S^1.
\end{equation}
 Then a straightforward computation shows 
\begin{equation}\label{eq:Alambda2}
 \A_{\lambda} = 
 \left(
  \frac{1}{2}(1-\l^{-1}) \alpha^{\prime} 
  +\frac{1}{2}(1-\l) \alpha^{\prime \prime}, 
  \frac{1}{2}(1+\l^{-1}) \alpha^{\prime} 
  +\frac{1}{2}(1+\l) \alpha^{\prime \prime}
  \right)
  =(\alpha_{\l},\alpha_{-\l}).
\end{equation}
 Clearly, $\A_{\lambda}$ is integrable, that is, 
 $d \A_{\lambda} + \tfrac{1}{2}[\A_{\lambda} \wedge \A_{\lambda}] 
 =\mathbf 0$, if and only if $\alpha_{\lambda}$ is 
 integrable, that is, $d \alpha_{\lambda} + \tfrac{1}{2}[\alpha_{\lambda} 
 \wedge \alpha_{\lambda}] =0$. 
 Hence there exits a map 
 $\mathcal{F}_{\lambda}:\mathbb{D}\times\mathbb{S}^1\to \mathcal{G}$ 
 satisfying $\mathcal{F}_{\lambda}^{*}
 \theta_\mathcal{G}=\mathcal{A}_\lambda$. The map 
 $\mathcal{F}_\lambda$ is called an \textit{extended frame} of $\varphi$. 
 The extended frame $\mathcal{F}_{\lambda}$ has the form 
\be
 \mathcal{F}_{\lambda}=(F_{\l},F_{-\l}),
\ee
 where $F_{\lambda}$ is an extended solution.
 The following theorem is an immediate consequence 
 of Theorem \ref{thm:familyofconnections}.
\begin{Theorem}\label{thm:Alambda}
 Let $M$ be a Riemann surface, $G$
 a Lie group and $\neutral$ its neutral connection.
 Moreover, let $\varphi : M \to 
 (G, \neutral)$ be a smooth map and $\alpha_{\lambda}$ and 
 $\mathcal A_{\lambda}$ the $1$-forms defined in \eqref{eq:alphalambda} 
 and \eqref{eq:Alambda}.
 Then the following statements are equivalent$:$
\begin{enumerate}
 \item $\varphi$ is $\neutral$-harmonic. 
 \item $d + \alpha_{\lambda}$ is a family of flat connections 
 for all $\lambda \in \mathbb S^1$.
 \item $d + \A_{\lambda}$ is a family of flat connections 
 for all $\lambda \in \mathbb S^1$.
\end{enumerate}
\end{Theorem}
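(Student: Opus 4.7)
The plan is to deduce this theorem in two short steps, exactly as signalled by the statement that it is ``an immediate consequence of Theorem \ref{thm:familyofconnections}.'' First I would observe that the neutral connection $\neutral$ is of the form $\nm$ with the skew-symmetric bilinear map $\mu(X,Y)=\tfrac{1}{2}[X,Y]$, so the equivalence $(1)\Leftrightarrow (2)$ is a direct application of Theorem \ref{thm:familyofconnections} to this $\mu$: a map is $\neutral$-harmonic if and only if the loop $d+\alpha_\lambda$ with $\alpha_\lambda$ as in \eqref{eq:alphalambda} is flat for every $\lambda\in\mathbb{S}^1$.

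The remaining work is the equivalence $(2)\Leftrightarrow(3)$, and this is what the computation \eqref{eq:Alambda2} was arranged to deliver. I would exploit the identity $\A_\lambda=(\alpha_\lambda,\alpha_{-\lambda})$ together with the fact that the bracket of $\G=\mathfrak{g}\oplus\mathfrak{g}$ acts componentwise. Therefore
\[
d\A_\lambda+\tfrac{1}{2}[\A_\lambda\wedge\A_\lambda]
=\Bigl(d\alpha_\lambda+\tfrac{1}{2}[\alpha_\lambda\wedge\alpha_\lambda],\;
d\alpha_{-\lambda}+\tfrac{1}{2}[\alpha_{-\lambda}\wedge\alpha_{-\lambda}]\Bigr),
\]
so $\A_\lambda$ being flat is equivalent to both $\alpha_\lambda$ and $\alpha_{-\lambda}$ being flat. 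Requiring flatness of $\A_\lambda$ for every $\lambda\in\mathbb{S}^1$ is then identical to requiring flatness of $\alpha_\lambda$ for every $\lambda\in\mathbb{S}^1$, since the map $\lambda\mapsto -\lambda$ is a bijection of $\mathbb{S}^1$ onto itself.

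I do not expect any genuine obstacle: both implications in $(2)\Leftrightarrow(3)$ follow at once from the componentwise decomposition, and $(1)\Leftrightarrow(2)$ is simply a relabelling of Theorem \ref{thm:familyofconnections}. The only point that deserves a brief mention in the write-up is that the lifted form $\A$ in \eqref{eq:doubleMC} really does decompose as $\A_\K+\A_\P^{\prime}+\A_\P^{\prime\prime}$ in the way claimed before \eqref{eq:Alambda}, so that the introduction of $\lambda$ in \eqref{eq:Alambda} produces exactly the pair $(\alpha_\lambda,\alpha_{-\lambda})$ in \eqref{eq:Alambda2}; this is a direct computation using $\A_\K=\tfrac{1}{2}(\alpha,\alpha)$ and $\A_\P=\tfrac{1}{2}(-\alpha,\alpha)$, and it is the only step that requires more than a line to verify.
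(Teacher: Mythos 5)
Your proposal is correct and follows exactly the route the paper takes: the equivalence $(1)\Leftrightarrow(2)$ is Theorem \ref{thm:familyofconnections} applied to the skew-symmetric $\mu=\tfrac{1}{2}[\cdot,\cdot]$, and $(2)\Leftrightarrow(3)$ comes from the identity $\A_\lambda=(\alpha_\lambda,\alpha_{-\lambda})$ of \eqref{eq:Alambda2} together with the componentwise bracket on $\mathfrak{g}\oplus\mathfrak{g}$. Your remark that flatness of a single $\A_\lambda$ requires flatness of both $\alpha_\lambda$ and $\alpha_{-\lambda}$, reconciled over the whole family by the bijection $\lambda\mapsto-\lambda$ of $\mathbb{S}^1$, is in fact slightly more careful than the paper's one-line assertion.
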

\begin{Remark}
 Every semi-Riemannian symmetric space $G/K$ with 
 semi-simple $G$ is identified with the image of 
 the Cartan immersion $\iota: G/K  \hookrightarrow G$.
 For example, one obtains $\iota (\mathbb S^2) \subset \mathrm{SU}_2$ 
 and $\iota(\mathbb H^2) \subset \mathrm{SU}_{1, 1}$. 
 Thus as a consequence of Theorem \ref{thm:Alambda} 
 we can establish the loop 
 group formalism as for CMC surfaces in $\mathbb E^3$,
 spacelike CMC surfaces in Minkowski space and 
 in many other cases, \cite{Dorfmeister}. 
\end{Remark}

\subsection{Loop groups decompositions}
 In the rest of this subsection we will follow the procedure presented in \cite{DPW}, 
 even though our symmetric space $G = G \times G/ \Delta$ 
 is not necessarily compact and does, in general,  not even 
 carry any bi-invariant metric. But it has a ``nice'' bi-invariant 
 connection, and this will allow us to produce all 
 $\neutral$-harmonic maps into $G$ by the loop group method.

 Let $G$ be a Lie group which admits a faithful finite dimensional 
 representation and $G^\C$ its simply-connected linear complexification.
 Let $\LGC$ be the (connected)  loop group of  $G^{\mathbb{C}}$:
\be
 \LGC=\{\gamma:\mathbb{S}^1\to G^{\mathbb C}\}.
\ee
 We equip $\LGC$ with a weighted Wiener topology \cite{BD1} such that $\LGC$ is a Banach Lie group and all the subgroups occurring in this paper will be Banach Lie subgroups.
 Let $\mathcal{D}$ be the unit disk in the complex plane and 
 $\overline{\C}$ the extended complex plane.
 We now introduce the following subgroups of $\Lambda G^{\mathbb C} :$
\begin{eqnarray*}
 \LG &=&\{\gamma \in \Lambda G^{\mathbb C}
 \vert \ \gamma(\lambda)\in G\},\\
 \Lambda^{+}G^{\mathbb C} &=&\{\gamma \in \Lambda G^{\mathbb{C}}
 \ \vert \ \gamma \ \textrm{and}\ \gamma^{-1} 
 \textrm{extend holomorphically to}\ \mathcal{D}\},\\
\Lambda^{-}G^{\mathbb C} &=&\{\gamma \in \Lambda G^{\mathbb{C}}
 \ \vert \ \gamma \ \textrm{and}\ \gamma^{-1}
 \textrm{extend holomorphically to}\ \overline{\mathbb{C}}\setminus \mathcal{D}\},
 \\
  \Lambda^{-}_{*}G^{\mathbb C} &=&\{\gamma \in \Lambda^{-} G^{\mathbb{C}}
 \ \vert \ \gamma(\lambda=\infty)=\id \}.
\end{eqnarray*}
 Below we will recall two important decomposition 
 theorems obtained in \cite{BD1, Kellersch, {PreS:LoopGroup}}. 
\begin{Remark} In our applications we consider frames with values in 
 some real Lie group $G$ and choose a convenient complexification 
 (which may not necessarily be simply-connected). These frames 
 are continuous images of connected surfaces $M$ and attain the value $\id$ 
 at some base point $z_0 \in M$. Hence all these frames take values in 
 the connected component of  $\id \in \Lambda G^{\mathbb{C}}$. 
 Thus we are only interested in the decompositions of the identity component 
 of our loop group.
 If $\tilde{G}^\C$ denotes the simply-connected cover of $G^\C$, then the 
 loop group $\Lambda \tilde{G}^\C$ is connected and the canonical projection,
 induced from $\tilde{\pi} :\tilde{G}^\C \rightarrow G^\C$,  
 has as image the connected component of $\Lambda G^{\mathbb{C}}$. 
 Therefore, below we will write down the decomposition theorems for 
 $\Lambda \tilde{G}^\C$, but will apply them later to the projection  
 onto the connected component $(\Lambda G^\C)^o$. 
 It is not difficult to verify that the double cosets are parametrized by 
 the same set of representatives  and that  a double coset in $(\Lambda G^\C)^o$ 
 is open if and only if the corresponding double coset in $\Lambda G^{\mathbb{C}}$ 
 is open. It needs to be pointed out that the three references given above all use 
 simply-connected $G^\C$, even though in \cite{BD1} this was missed to state.
\end{Remark}
 By the Levi theorem 
 \cite[Theorem 18.4.3]{Hochschild}, 
 there exist a real analytic reductive subgroup $H$ of $G$, and 
 a simply-connected real analytic solvable subgroup 
 $B$ of $G$ such that   
 \begin{equation}\label{eq:Levi-decomposition}
 G \cong H \ltimes B.
 \end{equation}
 Note that $B$ is a normal subgroup of $G$, and 
 $B$ can be represented in the form  
 $B=A_1\ltimes ( A_2 \ltimes \cdots \ltimes N)$ 
 with simply-connected $1$-dimensional abelian 
 Lie groups $A_j$ and simply-connected  unipotent Lie group $N$. 
 Since $ G = H \cdot B  \cong  H \ltimes B$, the complexified  
 simply-connected group $G^{\mathbb C}$ satisfies 
 $G^{\mathbb C} =H^{\mathbb C} \cdot B^{\mathbb C}  \cong H^{\mathbb C} 
 \ltimes  B^{\mathbb C}$, we also have
\be
 \LGC \cong \LHC \cdot \LBC,\ \
  \Lambda^{-}G^{\mathbb C} \cong \Lambda^{-}H^{\mathbb C} \cdot 
 \Lambda^{-} B^{\mathbb C}, \ \
 \Lambda^{+}G^{\mathbb C} \cong \Lambda^{+}H^{\mathbb C} \cdot 
 \Lambda^{+} B^{\mathbb C}.
\ee
 Below we will state two decomposition theorems. More precisely, we will describe
 (to some extent) the (double) cosets of the action of  the product of some natural 
 subgroups of the loop group  $\Lambda G^{\mathbb C}$. 
 Actually, there are several natural choices. Since in this paper we are mainly interested in the open cosets, 
 these possibly different choices have little importance to us.

 Let $\Lambda^{d}H^{\mathbb C}$ denote the set of representatives in 
 the unique Birkhoff decomposition 
 of \cite[Corollary 5(c) ]{Kac-Pet} or \cite[Theorem 4.4]{BD1}
 of a simply-connected semi-simple Lie group
$H^\C$:
\begin{equation}
 \Lambda H^\C = \bigcup_{s\in \Lambda^{d}H^{\mathbb C}} 
(\Lambda^{-}_{\sharp} H^{\mathbb C})_s^{-} \cdot s \cdot \mathit C \Lambda^{+}_\sharp H^{\mathbb C},
\end{equation}
 where $\mathit C$ is  a Cartan subgroup of $G^\C$, 
 $\Lambda^{\epsilon}_\sharp H^{\mathbb C}$ denotes the 
 subgroup of $\Lambda^{\epsilon }H^{\mathbb C}$ defined such that 
 the  $\lambda$-independent term is in the maximal nilpotent subgroup of 
 the (relative to $\mathit C$ and a choice positive roots) 
 opposite Borel subgroup of $H^\C$, if $\epsilon = -$ 
 and in the maximal nilpotent subgroup of the Borel subgroup 
 (relative to $\mathit C$ and the same choice of positive roots)
 if $\epsilon = +$.
 Finally, $(\Lambda^{-}_{\sharp} H^{\mathbb C})_s^{-}$ is defined by 
\be
 (\Lambda^{-}_{\sharp} H^{\mathbb C})_s^{-} =
 \left\{
 h \in \Lambda^{-}_\sharp H^{\mathbb C}\;|\; 
 s^{-1} h s \in \Lambda^{-}_\sharp H^{\mathbb C}\right\}.
\ee
 Then note that 
 $\Lambda^{d}H^{\mathbb C}$ can be identified with the Weyl group of $\Lambda G^\C$.

 Further  define the subgroups
 \begin{equation*}
 \left(
 \Lambda^{-}_*B^{\mathbb C}
 \right)_{s}^{\pm}=\{b\in \Lambda^{-}_*B^{\mathbb C} \ \vert \ 
 sbs^{-1}\in \Lambda^{\pm}_{*}B^{\mathbb C}\},
\end{equation*}
 and set $\Lambda ^{\pm}_\sharp G^\C =  \Lambda^{\pm }_\sharp H^{\mathbb C} \cdot 
 \Lambda^{\pm} B^{\mathbb C}$ and 
 $(\Lambda ^{-}_{\sharp} G^\C)_s^- =  
 (\Lambda^{- }_{\sharp}H^{\mathbb C})_s^-  \cdot 
 (\Lambda^{-} B^{\mathbb C})_s^{-}$.
 Then from\cite[Theorem 4.5]{BD1}, \cite{Kellersch}, 
 we have:
 \begin{Theorem}[Birkhoff decomposition]
 \label{thm:Birkhoff}
 Assume $G^\C$ to be simply-connected, then the loop group $\LGC$ can be decomposed into  the disjoint union of double cosets$:$
\be
 \LGC = \bigcup_{s\in \Lambda^{d}H^{\mathbb C}} 
 (\Lambda^{-}_{\sharp}G^{\mathbb C})_s^- \cdot s\left(
 \Lambda^{-}_*B^{\mathbb C}
 \right)_{s}^{+} \cdot  \mathit C \Lambda^{+}_\sharp G^{\mathbb C}.
\ee
 Moreover, the subset 
 $\mathit {Br}_{G^\mathbb{C}}:=\Lambda^{-}_{*}G^{\mathbb C} \cdot 
 \Lambda^{+} G^{\mathbb C}$
 is called the {\rm (left) Birkhoff big cell}  of $\Lambda G^{\mathbb {C}}$ and it is 
 an open and  dense subset of $\Lambda G^{\mathbb{C}}$. The multiplication map 
\be
\Lambda^{-}_{*}G^{\mathbb C} \times 
 \Lambda^{+} G^{\mathbb C}
 \to 
\Lambda^{-}_{*}G^{\mathbb C} \cdot 
 \Lambda^{+} G^{\mathbb C}
\subset \LGC
\ee
 provides a complex analytic diffeomorphism onto the Birkhoff big cell 
 $\mathit {Br}_{G^\mathbb{C}}.$
 \end{Theorem}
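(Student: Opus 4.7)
The plan is to assemble the statement from the cited Birkhoff decomposition for simply-connected semi-simple loop groups (Kac--Peterson, as restated in \cite{BD1, Kellersch}) combined with the Levi decomposition \eqref{eq:Levi-decomposition}. First I would lift $G^{\mathbb C}\cong H^{\mathbb C}\ltimes B^{\mathbb C}$ to the loop group level, so that every $\gamma\in\LGC$ factors uniquely as $\gamma=\gamma_H\cdot\gamma_B$ with $\gamma_H\in\LHC$ and $\gamma_B\in\LBC$; this splitting respects the $\Lambda^{\pm}$ subgroups because holomorphic extension commutes with the algebraic projection onto the reductive quotient, so $\Lambda^{\pm}G^{\mathbb C}\cong\Lambda^{\pm}H^{\mathbb C}\cdot\Lambda^{\pm}B^{\mathbb C}$ as already recorded in the excerpt.

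Next I would apply to $\gamma_H$ the Birkhoff decomposition displayed in the excerpt, and decompose $\gamma_B$ uniquely as $\gamma_B=\gamma_B^{-}\cdot\gamma_B^{+}$ with $\gamma_B^{-}\in\Lambda^{-}_{*}B^{\mathbb C}$ and $\gamma_B^{+}\in\Lambda^{+}B^{\mathbb C}$. The unique global splitting for the solvable factor uses that $B$ is a tower of simply-connected one-parameter semi-direct products capped by a unipotent radical, so no Weyl-element obstruction occurs; this is essentially the content of \cite{Kellersch}. The nontrivial step is to intertwine the two decompositions: writing $s\gamma_B^{-}=(s\gamma_B^{-}s^{-1})\cdot s$, conjugation by $s$ splits $\gamma_B^{-}$ into a piece remaining in $\Lambda^{-}$ and a piece thrown into $\Lambda^{+}$, which is precisely the reason for introducing $(\Lambda^{-}_{*}B^{\mathbb C})_s^{\pm}$. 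After grouping the $\Lambda^{-}$-piece with $(\Lambda^{-}_{\sharp}H^{\mathbb C})_s^{-}$ on the left of $s$ and absorbing the $\Lambda^{+}$-piece on the right into $\mathit C\,\Lambda^{+}_{\sharp}G^{\mathbb C}$, one obtains the stated double-coset parametrization. Disjointness across different $s$ follows from the disjointness in the semi-simple Birkhoff decomposition together with uniqueness of the Levi splitting.

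For the big cell claim, $\mathit{Br}_{G^{\mathbb C}}$ is precisely the stratum with $s=\id$ and trivial Cartan element; openness is automatic since it is the top Birkhoff stratum in a Banach Lie group, and the remaining strata all have strictly positive codimension, which gives density. Finally, the product $\Lambda^{-}_{*}G^{\mathbb C}\times\Lambda^{+}G^{\mathbb C}\to\mathit{Br}_{G^{\mathbb C}}$ is injective because $\Lambda^{-}_{*}G^{\mathbb C}\cap\Lambda^{+}G^{\mathbb C}=\{\id\}$, is holomorphic by construction, and has surjective differential at every point; the inverse function theorem in Banach manifolds then yields the complex analytic diffeomorphism. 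The principal obstacle is the careful bookkeeping of how conjugation by $s$ interacts with the semi-direct product structure and the $\Lambda^{\pm}$ splitting, so that the double cosets are genuinely disjoint; this combinatorial step, carried out in \cite{BD1, Kellersch}, is what forces the introduction of the auxiliary subgroups $(\Lambda^{-}_{*}B^{\mathbb C})_s^{\pm}$ and $(\Lambda^{-}_{\sharp}H^{\mathbb C})_s^{-}$.
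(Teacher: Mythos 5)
Your outline is correct and follows exactly the route the paper itself relies on: the paper gives no proof of this theorem but imports it verbatim from \cite[Theorem 4.5]{BD1} and \cite{Kellersch}, after setting up precisely the scaffolding you use (the Levi decomposition $G^{\mathbb C}\cong H^{\mathbb C}\ltimes B^{\mathbb C}$ lifted to loops, the Kac--Peterson Birkhoff decomposition of $\Lambda H^{\mathbb C}$, the global splitting of $\Lambda B^{\mathbb C}$, and the auxiliary subgroups $(\Lambda^{-}_{*}B^{\mathbb C})_s^{\pm}$ that record how conjugation by the Weyl representatives $s$ redistributes the solvable factor). The hard combinatorial steps you defer to \cite{BD1, Kellersch} are exactly the ones the paper also defers, so the proposal is consistent with the paper's treatment.
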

\begin{Remark}
 As in \cite{BD1}  one can easily show that each element $g \in \Lambda G^\C$ 
 can be represented uniquely in the form
\begin{equation}
 g = (h_- s b_-^- s^{-1}) (sb_-^+) (b_+ h_+),
\end{equation}
where $s \in  \Lambda^{d}H^{\mathbb C}$, 
 $b_-^+ \in  \left( \Lambda^{-}_*B^{\mathbb C} \right)_{s}^{+}$,
 $b_-^- \in \left( \Lambda^{-}_*B^{\mathbb C} \right)_{s}^{-}$, 
 $b_{+} \in  \Lambda^{+} B^{\mathbb C}$, $h_{+} \in  \Lambda^{+}_{\sharp} H^{\mathbb C}$
 and $h_- \in (\Lambda^-_{\sharp} H ^\C)_s^{-}$.
 From this it is not difficult to show that exactly 
 one double coset is open, namely the one with $s = \id$.
\end{Remark}
 In a similar fashion we obtain an Iwasawa decomposition of $\Lambda G^\C$.
 First we consider an Iwasawa decomposition (with disjoint cosets) of 
 $H^\C$ as derived in \cite[Chap. 4]{Kellersch}, \cite[Theorem 6.1]{BD1}:
\begin{equation}
\Lambda H^\C = \bigcup_{s\in \Lambda^{m}H^{\mathbb C}}\Lambda H \cdot 
s \cdot \Lambda^+ H^\C.
\end{equation}
 Here $\Lambda^{m}H^{\mathbb C}$ is a specific set of representatives
 for the double coset given in \cite{OV}.
 Then from \cite[Theorem 6.5]{BD1}, we have:
 \begin{Theorem}[Iwasawa decomposition]
 \label{thm:Iwasawa}
 Let $G^\C$ be simply-connected, then the loop group  
 $\LGC$ can be decomposed into a disjoint union 
 of double cosets$:$
\be
 \LGC=\bigcup_{s\in \Lambda^{m}H^{\mathbb C}}
 \LG \cdot
 s \Lambda B
 \cdot
 \Lambda^{+}G^{\mathbb C}.
\ee
 Moreover,  the subset
 $\mathit {Iw}_{G}^{\id}:=\Lambda{G}\cdot \Lambda^{+}G^{\mathbb C}$ 
 is called the 
 {\rm Iwasawa big cell} $($containing the identity element $\id$$)$.
 It is an open set in $\Lambda{G}^{\mathbb C}$. 
\end{Theorem}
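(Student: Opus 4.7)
The plan is to combine the Iwasawa decomposition for the reductive factor $H^\C$ (already available in the literature) with a ``trivial'' Iwasawa decomposition for the solvable factor $B^\C$, via the Levi-type factorizations $\LGC \cong \LHC \cdot \LBC$, $\LG \cong \Lambda H \cdot \Lambda B$ and $\PLGC \cong \Lambda^+ H^\C \cdot \Lambda^+ B^\C$ stated in the text preceding Theorem \ref{thm:Birkhoff}. Concretely, I would first quote \cite[Theorem 6.5]{BD1} (cf. \cite[Chap.~4]{Kellersch}) to obtain, for the simply-connected reductive group $H^\C$, the disjoint double coset decomposition
\begin{equation*}
\Lambda H^\C \;=\; \bigcup_{s \in \Lambda^m H^\C} \Lambda H \cdot s \cdot \Lambda^+ H^\C,
\end{equation*}
where $\Lambda^m H^\C$ is the set of representatives from \cite{OV}.

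Second, I would establish the solvable piece
\begin{equation*}
\LBC \;=\; \Lambda B \cdot \Lambda^+ B^\C,
\end{equation*}
that is, for the solvable factor there is only one (open) Iwasawa cell and no non-trivial representatives are needed. This would be proved inductively along the tower $B = A_1 \ltimes (A_2 \ltimes \cdots \ltimes N)$. The base case, where $N$ is simply-connected unipotent, is handled pointwise by the exponential map together with the Fourier-theoretic splitting $\Lambda \mathfrak n^\C = \Lambda \mathfrak n \oplus \Lambda^+_* \mathfrak n^\C$ (real Fourier modes of non-positive index versus positive modes extending holomorphically to $\mathcal D$); the inductive step uses the semidirect-product structure and the $\mathrm{Ad}$-action of one factor on the next, together with the fact that each abelian piece $A_j$ is simply-connected, so $A_j^\C \cong \mathbb C$ and the analogous Fourier splitting applies additively.

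Third, I would combine the two decompositions. Given $g \in \LGC$, Levi-factor it as $g = h \cdot b$ with $h \in \LHC$ and $b \in \LBC$, then apply the Iwasawa of $\LHC$ to write $h = k \cdot s \cdot h_+$ with $k \in \Lambda H$, $h_+ \in \Lambda^+ H^\C$, and rewrite
\begin{equation*}
g \;=\; k \cdot s \cdot h_+ \cdot b \;=\; k \cdot s \cdot (h_+ b h_+^{-1}) \cdot h_+.
\end{equation*}
Since $B$ is normal in $G$, the inner term lies in $\LBC$, and applying the solvable Iwasawa to it yields the desired form $\Lambda G \cdot s \Lambda B \cdot \Lambda^+ G^\C$. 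Disjointness of the double cosets is inherited from the reductive Iwasawa together with the uniqueness of the solvable splitting. For the final assertion about the big cell $\mathit{Iw}_G^{\id} = \Lambda G \cdot \Lambda^+ G^\C$, one notes that by \cite[Theorem 6.5]{BD1} the identity cell $\Lambda H \cdot \Lambda^+ H^\C$ is open in $\LHC$, and combined with the identity $\LBC = \Lambda B \cdot \Lambda^+ B^\C$ and the Levi factorization this gives an open neighborhood of $\id$ in $\LGC$; left-translation by elements of $\Lambda G \cdot \Lambda^+ G^\C$ extends openness to the entire cell, by the inverse function theorem applied to the splitting $\Lambda \mathfrak g^\C = \Lambda \mathfrak g \oplus \Lambda^+ \mathfrak g^\C$ at $\id$.

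The main obstacle I anticipate is the solvable Iwasawa statement together with the bookkeeping needed when conjugating by the representatives $s \in \Lambda^m H^\C$ in the combined step: one has to verify that the conjugation action on $\LBC$ is compatible with its splitting $\Lambda B \cdot \Lambda^+ B^\C$ so that the factors assemble into the claimed normal form without producing extra representatives. This rests on normality of $B$ in $G$ and on the fact that, unlike the reductive case, the solvable Fourier splitting is preserved under conjugation because it is intrinsically an additive splitting at the Lie algebra level.
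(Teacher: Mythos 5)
Your overall strategy coincides with the one the paper relies on: the paper does not actually prove Theorem \ref{thm:Iwasawa} but quotes it from \cite[Theorem 6.5]{BD1}, and the argument there is exactly your three-step scheme --- the reductive Iwasawa decomposition of $\LHC$ with representatives $\Lambda^{m}H^{\mathbb C}$, the \emph{global} splitting $\LBC=\Lambda B\cdot\Lambda^{+}B^{\mathbb C}$ for the solvable radical (which the paper itself records later, citing \cite[Lemma 6.4]{BD1}), and the gluing via the Levi factorization together with normality of $B^{\mathbb C}$. So there is no divergence of method to report, only two slips worth correcting. First, the additive splitting $\Lambda\mathfrak n^{\mathbb C}=\Lambda\mathfrak n\oplus\Lambda^{+}_{*}\mathfrak n^{\mathbb C}$ is false as written: any element of the right-hand side has \emph{real} constant Fourier coefficient (reality forces $X_{0}=\overline{X_{0}}$ on the $\Lambda\mathfrak n$-summand and $\Lambda^{+}_{*}$ contributes nothing in degree $0$), whereas a general element of $\Lambda\mathfrak n^{\mathbb C}$ has arbitrary $X_{0}\in\mathfrak n^{\mathbb C}$. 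The correct statement is $\Lambda\mathfrak n^{\mathbb C}=\Lambda\mathfrak n+\Lambda^{+}\mathfrak n^{\mathbb C}$, a surjective but non-direct sum with intersection $\mathfrak n$ (or a direct sum after adjoining $\sqrt{-1}\,\mathfrak n$ to the positive part); the same correction applies to your closing appeal to ``$\Lambda\mathfrak g^{\mathbb C}=\Lambda\mathfrak g\oplus\Lambda^{+}\mathfrak g^{\mathbb C}$'', though surjectivity of the differential is all the inverse function theorem needs for openness of the big cell, so the conclusion survives. Second, your claim that the solvable splitting is ``preserved under conjugation'' by the representatives $s$ is not true in general --- $\operatorname{Ad}(s(\lambda))$ for a nonconstant loop shifts Fourier degrees and preserves neither $\Lambda B$ nor $\Lambda^{+}B^{\mathbb C}$, which is precisely why the Birkhoff statement needs the twisted subgroups $\left(\Lambda^{-}_{*}B^{\mathbb C}\right)_{s}^{\pm}$ --- but it is also not needed: in your own combination step you only conjugate by $h_{+}\in\Lambda^{+}H^{\mathbb C}$ before applying the untwisted solvable Iwasawa, and disjointness follows by projecting modulo the normal subgroup $\Lambda B^{\mathbb C}$ onto the reductive decomposition. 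With these normalizations repaired your sketch reproduces the cited proof.
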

\begin{Remark}
\mbox{}
\begin{enumerate}
\item One can describe a unique decomposition of each loop \cite{BD1}, 
 but we will not need these details for this paper.

\item In general, there  are several open Iwasawa cells $\mathit {Iw}^{\omega}$. 
 In this paper we will mostly use  \textit{the Iwasawa big cell}.

\item Of course, one is interested to know when there is only one open Iwasawa cell. This happens if and only if 
     $\mathit {Iw}^{\id}$ is open and dense in $\LGC$.
\end{enumerate}
\end{Remark}
 About the denseness of $\mathit {Iw}^{\id}$ the following result is 
 fundamental.
 \begin{Theorem}[Theorem 7.2 and 7.3 in \cite{BD1}] Let $G^\C$ be simply-connected. Then,
 if the semi-simple part of a maximal 
 compact subgroup of $H^{\mathbb C}$ is simply-connected, then 
 the Iwasawa big cell $\mathit {Iw}^{\id}_H$ is dense in 
 $\Lambda{H}^\mathbb{C}$ and we have 
\be
 \mathit {Iw}^{\id}_G=\mathit {Iw}^{\id}_{H}\cdot \Lambda{B}^{\mathbb C}.
\ee
 In particular, $\mathit {Iw}^{\id}_G=\Lambda{G}^{\mathbb C}$ if and only if 
 the semi-simple part $S$ of $H$ is compact.
 \end{Theorem}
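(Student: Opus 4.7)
The plan is to split the theorem into three pieces and handle them in order: (i) density of $\mathit{Iw}_H^{\id}$ in $\Lambda H^{\mathbb C}$; (ii) the semidirect-product formula $\mathit{Iw}_G^{\id} = \mathit{Iw}_H^{\id}\cdot \Lambda B^{\mathbb C}$; and (iii) the characterization of when $\mathit{Iw}_G^{\id}$ exhausts all of $\Lambda G^{\mathbb C}$.

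For (i), I would first reduce the reductive case to the semi-simple one by writing $H$ as an almost direct product $Z(H)^{o}\cdot H_{ss}$ of its connected center and its semi-simple part $H_{ss}=:S$, and correspondingly $H^{\mathbb C} = Z(H^{\mathbb C})^{o}\cdot S^{\mathbb C}$. The Iwasawa big cell factors compatibly across this product, so density reduces to the two simpler cases of a complex torus (which is easy via loop-by-loop Fourier analysis) and of a complex semi-simple Lie group. For $S^{\mathbb C}$, let $K$ be a maximal compact subgroup; the hypothesis of the theorem guarantees $K$ itself is simply-connected (its center being finite and its commutator subgroup simply-connected by assumption). The fibration $\Omega K \to \Lambda K \to K$ then gives $\pi_0(\Lambda K)=\pi_1(K)=0$, so $\Lambda K$ lies entirely in the identity component of $\Lambda S^{\mathbb C}$. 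Now I would invoke the Pressley--Segal factorization (in the Banach-category form used in \cite{BD1}): the multiplication $\Lambda K \times \Lambda^{+}K^{\mathbb C}\to \Lambda S^{\mathbb C}$ has open image; combined with the finite-dimensional Iwasawa decomposition $S^{\mathbb C}=K\cdot A\cdot N$ applied pointwise in $\lambda$, one sees the image is also dense.

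For (ii) I would exploit the semidirect product structure $G^{\mathbb C} = H^{\mathbb C}\ltimes B^{\mathbb C}$. Since $B$ is simply-connected solvable and decomposes as an iterated semidirect product of copies of $\mathbb R$ with a unipotent group, induction on the dimension (using the standard loop-group factorization for $\mathbb C^{*}$ and $\mathbb C$) shows that the Iwasawa big cell of $\Lambda B^{\mathbb C}$ is all of $\Lambda B^{\mathbb C}$. Writing any $g_\lambda\in \Lambda G^{\mathbb C}$ as $g_H\cdot g_B$ with $g_H\in \Lambda H^{\mathbb C}$ and $g_B\in \Lambda B^{\mathbb C}$, if $g_H = u\cdot v$ with $u\in \Lambda H$, $v\in \Lambda^{+}H^{\mathbb C}$, then using normality of $B^{\mathbb C}$ inside $G^{\mathbb C}$ and the triangular structure of $\Lambda B^{\mathbb C}$, the expression $v\cdot g_B$ can be re-factored into $\Lambda B\cdot \Lambda^{+}G^{\mathbb C}$, placing $g_\lambda$ in $\Lambda G \cdot \Lambda^{+}G^{\mathbb C}$. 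The reverse inclusion is immediate from $\Lambda G = \Lambda H\cdot \Lambda B$ together with $\Lambda^{+}G^{\mathbb C} = \Lambda^{+}H^{\mathbb C}\cdot \Lambda^{+}B^{\mathbb C}$.

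For (iii), the ``if'' direction follows from (ii): when $S$ is compact, the Pressley--Segal theorem for compact groups gives $\mathit{Iw}_{S^{\mathbb C}}^{\id}=\Lambda S^{\mathbb C}$, and after taking care of the abelian center one obtains $\mathit{Iw}_H^{\id}=\Lambda H^{\mathbb C}$; combined with $\mathit{Iw}_{B^{\mathbb C}}^{\id}=\Lambda B^{\mathbb C}$ this yields $\mathit{Iw}_G^{\id}=\Lambda G^{\mathbb C}$. For the converse I would argue contrapositively: when $S$ is non-compact, the index set $\Lambda^{m}H^{\mathbb C}$ of Theorem \ref{thm:Iwasawa} contains at least one representative $s\neq \id$ for which the corresponding double coset is open in $\Lambda H^{\mathbb C}$, so $\mathit{Iw}_G^{\id}$ misses an open set of $\Lambda G^{\mathbb C}$. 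The main technical obstacle throughout is the careful bookkeeping of connected components and the verification that Pressley--Segal factorization transfers correctly from the compact case through the reductive almost-direct product and then through the Levi semidirect product; the simple-connectivity hypothesis on the semi-simple part of a maximal compact of $H^{\mathbb C}$ is precisely what is needed to rule out further open Iwasawa cells inside the identity component of $\Lambda H^{\mathbb C}$.
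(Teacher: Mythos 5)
First, a remark on the comparison itself: the paper does not prove this statement — it is imported verbatim as Theorems 7.2 and 7.3 of \cite{BD1} — so there is no in-paper argument to measure your reconstruction against. Judged on its own, your proposal has a genuine gap at its central point, namely the density of $\mathit{Iw}^{\id}_H=\Lambda H\cdot\Lambda^{+}H^{\mathbb C}$ in $\Lambda H^{\mathbb C}$. In step (i) you replace the real form $H$ (resp.\ its semi-simple part $S$) by a maximal compact subgroup $K$ of $H^{\mathbb C}$ and invoke Pressley--Segal for $\Lambda K\times\Lambda^{+}K^{\mathbb C}\to\Lambda S^{\mathbb C}$. But the big cell in the theorem is built from loops in the \emph{real} group $H$ coming from the Levi decomposition of $G$, and $H$ is in general a non-compact real form of $H^{\mathbb C}$; that is exactly the case the theorem is about (density without equality). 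The set $\Lambda K\cdot\Lambda^{+}S^{\mathbb C}$ is the Iwasawa big cell for the \emph{compact} real form and equals all of $\Lambda S^{\mathbb C}$ by Pressley--Segal; it says nothing about $\Lambda S\cdot\Lambda^{+}S^{\mathbb C}$ when $S\neq K$. Concretely, for $H=S=\mathrm{SL}_2\mathbb{R}$ your argument yields $\Lambda\,\mathrm{SU}_2\cdot\Lambda^{+}\mathrm{SL}_2\mathbb{C}=\Lambda\,\mathrm{SL}_2\mathbb{C}$, whereas the claim to be proved is that $\Lambda\,\mathrm{SL}_2\mathbb{R}\cdot\Lambda^{+}\mathrm{SL}_2\mathbb{C}$ is a \emph{proper} open dense subset. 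Moreover, applying the finite-dimensional decomposition $S^{\mathbb C}=KAN$ ``pointwise in $\lambda$'' cannot help: membership in $\Lambda^{+}H^{\mathbb C}$ is a holomorphic-extension (Fourier) condition on the whole loop, not a pointwise one, so a pointwise $KAN$-factorization of $g(\lambda)$ does not place the factors in the relevant loop subgroups. The actual proof in \cite{BD1} proceeds through the classification of the double cosets $\Lambda H\backslash\Lambda H^{\mathbb C}/\Lambda^{+}H^{\mathbb C}$ (Kellersch's loop-group Iwasawa theory, reduced to the Birkhoff decomposition via the anti-holomorphic involution defining the real form), and it is in controlling which cosets can be open that the simple-connectivity of the semi-simple part of the maximal compact enters — not merely in making $\Lambda K$ connected.

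Your part (ii) — conjugating the $\Lambda^{+}H^{\mathbb C}$-factor past $\Lambda B^{\mathbb C}$ using normality and the global Iwasawa decomposition $\Lambda B^{\mathbb C}=\Lambda B\cdot\Lambda^{+}B^{\mathbb C}$ — is sound and is essentially how \cite{BD1} reduces $G$ to $H$. In part (iii), the ``if'' direction is fine because when $S$ is compact your Pressley--Segal argument does apply (there $K=S$ and the two big cells coincide); but the ``only if'' direction rests on the unproved assertion that non-compactness of $S$ forces the complement of the identity cell to be nonempty (indeed, to contain another open cell). That assertion is true, but it is precisely the hard content of \cite[Theorem 7.3]{BD1} and requires the double-coset analysis that the flawed step (i) was supposed to supply; note in particular that it cannot be detected on constant loops, since every constant loop already lies in $\Lambda^{+}G^{\mathbb C}$ and hence in the big cell.
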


\subsection{Generalized Weierstrass type representation} 
 We denote by $\G$ the direct product $G \times G $ 
 and $(G \times G)^{\C} =G^{\C} \times G^{\C}$ by $\G^{\C}$.
 Consider the double loop group
\be
 \LGCD := (\Lambda G \times \Lambda G)^{\C} = \LGC \times \LGC.
\ee
 Then the \textit{twisted loop group} $\LGCD_{\sigma}$, twisted by $\sigma$
\eqref{sigma}, and its real form $\LGD_{\sigma}$ are
 defined by
\be
 \LGCD_{\sigma} := \{ (g(\l), g(-\l))\;|\; g \in \LGC\}, \;\;
 \LGD_{\sigma} := \{ (g(\l), g(-\l))\;|\; g \in \LG \}.
\ee
 According to the Levi decomposition $G \cong H \ltimes B$, we consider the twisted 
 double loop groups of $B^{\mathbb C}$ and $H^{\mathbb C}$, and its real forms:
 \begin{align*}
  \Lambda \B^{\mathbb C}_{\sigma}  &= \left\{ (b(\l), b(-\l))\; |\; b \in \Lambda B^{\mathbb C}\right\}, \;\;\;
 \Lambda \mathcal H^{\mathbb C}_{\sigma} = \{
 (h(\l), h(-\l)) \;|\; h \in \Lambda H^{\mathbb C}\}, \\
  \Lambda \B_{\sigma}  &= \left\{ (b(\l), b(-\l))\; |\; b \in \Lambda B\right\}, \;\;\;
 \Lambda \mathcal H_{\sigma} = \{
 (h(\l), h(-\l)) \;|\; h \in \Lambda H\}.
 \end{align*}
 For the ``positive loop subgroup'' and ``negative loop subgroup'' 
 of $\LGCD_{\sigma}$ we set:
\be
 \PNLGCD_{\sigma} = \{(g(\l), g(-\l))\;|\;g \in \PNLGC \}, \;\;\;
\ee
 and we will need to define the following subgroup: 
\begin{align*}
\NLGCDN_{\sigma}&=\{(g(\l), g(-\l)) \;|\; \mbox{$g \in \NLGC$ and
 $g(\l = \infty) =e$}\}.
\end{align*}
 According to the Birkhoff decomposition of $\LGC$ in Theorem \ref{thm:Birkhoff}, 
 we introduce the following subgroups of $\LGCD_{\sigma}$:
\begin{align*}
\Lambda_{\sharp}^{\pm} \G^{\mathbb C}_{\sigma}
& =\left\{ (g(\l), g(-\l)) \;|\; g \in 
 \Lambda_{\sharp}^{\pm} G^{\mathbb C} \right\},\\
(\Lambda_{\sharp}^{-} \G^{\mathbb C}_{\sigma})_s^{-}
& =\left\{ (g(\l), g(-\l)) \;|\; g \in 
 (\Lambda_{\sharp}^{-} G^{\mathbb C})_s^{-}\ \right\}, \\
(\Lambda_{*}^{-} \mathcal B^{\mathbb C}_{\sigma})_s^{+}
& =\left\{ (g(\l), g(-\l)) \;|\; g \in 
 (\Lambda_{*}^{-} B^{\mathbb C}_{\sigma})_s^{+} \right\}.
\end{align*}
 Now we consider the Birkhoff decomposition as well as the
 Iwasawa decomposition for the loop group
 $\LGCD_{\sigma} (\cong \LHCD_{\sigma} \cdot \LBCD_{\sigma})$.
 Then the Birkhoff and Iwasawa decomposition Theorems \ref{thm:Birkhoff} 
 and \ref{thm:Iwasawa} for $\Lambda{G}^\mathbb{C}$ induce the following 
 decomposition theorems for $\LGCD_{\sigma}$. 
 \begin{Theorem}[Birkhoff decomposition]
 \label{thm:Birkhoffdouble}
 The loop group $\LGCD_{\sigma}$ 
 can be decomposed into a disjoint union of double cosets:
\be
 \LGCD_{\sigma} = \bigcup_{s \in \Lambda^d \mathcal H^{\mathbb C}_{\sigma}}
 (\Lambda_{\sharp}^{-} \G^{\mathbb C}_{\sigma})_s^{-} \cdot  
 s(\Lambda_{*}^{-} \mathcal B^{\mathbb C}_{\sigma})_s^{+}
 \cdot \mathcal C \Lambda_{\sharp}^{+} \G^{\mathbb C}_{\sigma},
\ee
 where $\mathcal  C = (\mathit C, \mathit C)$ with a Cartan subgroup $\mathit C$ 
 and $\Lambda^d \mathcal H^{\mathbb C}_{\sigma} = \{(h(\l), h(-\l))\;|\;
 h \in \Lambda^d H^{\mathbb C}\}$.
 Moreover, the subset $\mathcal Br_{\mathcal G^{\mathbb C}} = 
  \Lambda_{*}^{-} \G^{\mathbb C}_{\sigma} 
 \cdot \Lambda^{+} \G^{\mathbb C}_{\sigma}$ 
 is called the {\rm (left) Birkhoff big cell} and it is an open and dense subset of 
 $\LGCD_{\sigma}$. The multiplication map 
\be
\Lambda_{*}^{-} \G^{\mathbb C}_{\sigma} 
 \times \Lambda^{+} \G^{\mathbb C}_{\sigma}
 \to    
\Lambda_{*}^{-} \G^{\mathbb C}_{\sigma} 
 \cdot \Lambda^{+} \G^{\mathbb C}_{\sigma}
 \subset \LGCD_{\sigma}
\ee
 provides an analytic diffeomorphism 
 onto  the Birkhoff big cell  $\mathcal B_{\mathcal G^{\mathbb C}}$.
\end{Theorem}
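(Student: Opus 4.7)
The proof strategy is transport of structure: the group $\LGCD_{\sigma}$ is in a natural way isomorphic to $\LGC$, and under this isomorphism every subgroup appearing in the statement of the theorem is precisely the image of the corresponding subgroup featuring in the single-loop Birkhoff decomposition of Theorem \ref{thm:Birkhoff}. The entire statement will therefore follow by applying the isomorphism to the already established decomposition of $\LGC$.

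Concretely, I would define $\Phi : \LGC \to \LGCD_{\sigma}$ by $\Phi(g) = (g(\lambda), g(-\lambda))$. Since multiplication in $\LGCD = \LGC \times \LGC$ is componentwise, $\Phi$ is a group homomorphism; it is bijective with inverse equal to projection on the first factor, and with respect to the weighted Wiener topology it is bi-analytic, so it is a Banach Lie group isomorphism. Directly from the definitions collected just before the theorem one reads off
\begin{align*}
\Lambda^{\pm}_{\sharp}\mathcal G^{\mathbb C}_{\sigma} &= \Phi(\Lambda^{\pm}_{\sharp} G^{\mathbb C}), &
(\Lambda^{-}_{\sharp}\mathcal G^{\mathbb C}_{\sigma})_s^{-} &= \Phi((\Lambda^{-}_{\sharp} G^{\mathbb C})_s^{-}),\\
(\Lambda^{-}_{*}\mathcal B^{\mathbb C}_{\sigma})_s^{+} &= \Phi((\Lambda^{-}_{*} B^{\mathbb C})_s^{+}), &
\Lambda^{d}\mathcal H^{\mathbb C}_{\sigma} &= \Phi(\Lambda^{d} H^{\mathbb C}),
\end{align*}
together with $\mathcal C = \Phi(\mathit C)$, the last identity using that Cartan elements are constant loops, so $s(\lambda)=s(-\lambda)$. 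Applying $\Phi$ factor by factor to the Birkhoff decomposition of Theorem \ref{thm:Birkhoff}, and noting that $\Phi$ preserves products of subsets and is injective, yields the claimed disjoint union decomposition of $\LGCD_{\sigma}$.

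For the statement about the big cell, $\Phi$ sends $\mathit{Br}_{G^{\mathbb C}} = \Lambda^{-}_{*} G^{\mathbb C} \cdot \Lambda^{+} G^{\mathbb C}$ bijectively onto $\mathcal{Br}_{\mathcal G^{\mathbb C}} = \Lambda^{-}_{*}\mathcal G^{\mathbb C}_{\sigma} \cdot \Lambda^{+}\mathcal G^{\mathbb C}_{\sigma}$; since $\Phi$ is a homeomorphism, openness and denseness of the big cell transfer from $\LGC$ to $\LGCD_{\sigma}$. The twisted multiplication map $\Lambda^{-}_{*}\mathcal G^{\mathbb C}_{\sigma} \times \Lambda^{+}\mathcal G^{\mathbb C}_{\sigma} \to \mathcal{Br}_{\mathcal G^{\mathbb C}}$ factors as $\Phi \circ \mu \circ (\Phi^{-1} \times \Phi^{-1})$, where $\mu$ is the single-loop multiplication map, an analytic diffeomorphism by Theorem \ref{thm:Birkhoff}; hence the twisted map is itself an analytic diffeomorphism. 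I do not foresee any substantial obstacle: the entire theorem reduces by transport of structure to the already proven Theorem \ref{thm:Birkhoff}. The only step that needs explicit verification is that the subgroup definitions on the $\sigma$-twisted side agree literally with $\Phi$ applied to their single-loop analogues, and this is immediate from the definitions listed just before the statement.
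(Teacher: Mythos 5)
Your proposal is correct and coincides with the paper's own (largely implicit) argument: the paper simply asserts that the Birkhoff decomposition for $\LGCD_{\sigma}$ is induced from Theorem \ref{thm:Birkhoff} via the identification $g \mapsto (g(\lambda), g(-\lambda))$, and your transport-of-structure proof is exactly the fleshed-out version of that assertion. The only point worth stating explicitly, which you do handle, is that the representatives $s$ and the Cartan subgroup sit inside $\LGCD_{\sigma}$ diagonally as $(s(\lambda), s(-\lambda))$ and $(c,c)$, so that the conjugation conditions defining the various subgroups correspond under $\Phi$.
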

\begin{Remark}
As always, there is only one open (and dense) Birkhoff big cell.
\end{Remark}
 Then we have the following Iwasawa decomposition theorem.
\begin{Theorem}[Iwasawa decomposition]\label{thm:Iwasawadouble}
 The loop group $\LGCD_{\sigma}$ 
 can be decomposed into a disjoint union 
 of double cosets:
\be
 \LGCD_{\sigma}=\bigcup_{s\in \Lambda^{m}\H^{\mathbb C}}
 \Lambda \G_{\sigma}  \cdot
 s \Lambda \B
 \cdot
 \Lambda^{+}\G^{\mathbb C}_{\sigma}.
\ee
 Here $ \Lambda^{m}\H^{\mathbb C} 
 = \{(h(\l), h(-\l))\;|\; h \in \Lambda^m H^{\mathbb C}\}$.
 Moreover,  the subset
 $\mathcal Iw_{\G}^{\id}:= \Lambda{\G}_{\sigma}\cdot \Lambda^{+}\G^{\mathbb C}_{\sigma}$ 
 is called the 
{\rm Iwasawa big cell} and it
 is an open set in $\Lambda{\G}^{\mathbb C}_{\sigma}$. 
\end{Theorem}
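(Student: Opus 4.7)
My plan is to reduce the twisted double version to the untwisted Iwasawa Theorem~\ref{thm:Iwasawa} via the natural identification of $\LGCD_\sigma$ with $\LGC$. First I would introduce the map
\[
 \Phi : \LGC \longrightarrow \LGCD_\sigma,
 \qquad g \longmapsto (g(\lambda), g(-\lambda)),
\]
and verify that $\Phi$ is an isomorphism of Banach Lie groups: it is a group homomorphism because multiplication in $\LGCD_\sigma \subset \LGC \times \LGC$ is done coordinate-wise and the values at $\lambda$ and $-\lambda$ multiply independently; it is bijective because the map $g \mapsto g(\lambda)$ recovers the loop from the pair; and its continuity and smoothness in both directions are immediate from the definition of the weighted Wiener topology on each factor.

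Next I would check that $\Phi$ restricts to isomorphisms of the relevant subgroups. By the definitions listed in the paper,
\[
 \Phi(\Lambda G) = \Lambda\G_\sigma, \quad
 \Phi(\Lambda B) = \Lambda\B_\sigma, \quad
 \Phi(\Lambda^+ G^\C) = \Lambda^+\G^\C_\sigma, \quad
 \Phi(\Lambda^m H^\C) = \Lambda^m\H^\C,
\]
so the four building blocks of the ordinary Iwasawa decomposition of $\LGC$ match, under $\Phi$, the four building blocks appearing in the statement. Because $\Phi$ is a group isomorphism, products and double cosets transport: given $g \in \LGC$ decomposed as $g = u \cdot s \cdot b \cdot v^+$ with $u \in \Lambda G$, $s \in \Lambda^m H^\C$, $b \in \Lambda B$, $v^+ \in \Lambda^+ G^\C$ via Theorem~\ref{thm:Iwasawa}, applying $\Phi$ produces the factorization
\[
 (g(\lambda), g(-\lambda)) = \Phi(u) \cdot \Phi(s) \cdot \Phi(b) \cdot \Phi(v^+)
\]
in $\Lambda \G_\sigma \cdot s \Lambda \B \cdot \Lambda^+ \G^\C_\sigma$ (with a harmless identification $s \leftrightarrow \Phi(s)$ on the indexing set). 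Disjointness of the double cosets downstairs follows at once from disjointness upstairs, since $\Phi$ is injective and maps each subgroup bijectively onto its $\sigma$-twisted counterpart.

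For the last assertion about the Iwasawa big cell, Theorem~\ref{thm:Iwasawa} asserts that $\mathit{Iw}_G^{\id} = \Lambda G \cdot \Lambda^+ G^\C$ is open in $\LGC$. Since $\Phi$ is a homeomorphism and carries this set to $\Lambda \G_\sigma \cdot \Lambda^+ \G^\C_\sigma = \mathcal{I}w_\G^{\id}$, the latter is open in $\LGCD_\sigma$, completing the proof.

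The only real obstacle I anticipate is verifying that $\Phi$ is honestly an isomorphism of Banach Lie groups in the chosen weighted Wiener topology, so that topological notions such as openness can be transported; but this is built into the way the twisted loop groups are parametrized in the paper (each twisted group is literally defined as the image of its untwisted counterpart under $\Phi$), so no analytic subtlety should survive. Everything else is a bookkeeping exercise that transfers the untwisted Iwasawa decomposition coordinate by coordinate.
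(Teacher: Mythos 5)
Your proposal is correct and is essentially the argument the paper intends: the paper gives no separate proof, simply asserting that Theorem \ref{thm:Iwasawa} ``induces'' the decomposition of $\LGCD_{\sigma}$, and your map $\Phi: g \mapsto (g(\lambda), g(-\lambda))$ makes that identification explicit, since every twisted subgroup in the statement is by definition the $\Phi$-image of its untwisted counterpart. The transport of the factorization, the disjointness, and the openness of the big cell all follow exactly as you describe.
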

\subsection{}
 From now on, we consider only the connected component of 
 a twisted loop group and denote it by the same symbol.
 Now we can apply the usual loop group scheme. 
 Starting from a $\neutral-$harmonic map $\varphi$, we consider the corresponding 
 extended frame $\F_\lambda$.
 Where possible we perform a Birkhoff decomposition of the extended frame $\F_\lambda$ 
 as described in Theorem \ref{thm:Birkhoffdouble}:
\begin{equation}\label{Birkhoff-split}
 \F_\lambda = \F_{-} \V_{+}, 
\end{equation}
 where $\F_{-} \in \NLGCDN_{\sigma}$ and $\V_{+} \in \PLGCD_{\sigma}$.
 Then the usual argument shows the following, see \cite{DPW}.
\begin{Theorem}\label{thm:Normalizedpot}
 Let $\varphi: \D \rightarrow G$ be a $\neutral$-harmonic map and 
 $\F_\lambda$ its extended frame. 
 Moreover let $\F_\lambda = \F_{-} \V_{+}$
 be the Birkhoff decomposition given in \eqref{Birkhoff-split}.
 Then the following statements hold{\rm :}
\begin{enumerate}
\item There exists a discrete subset $S \subset \D$ such that 
 \eqref{Birkhoff-split} is defined for all $z \in \D \setminus S.$

\item The map $\F_{-}$ only depends on $z$. It is a meromorphic 
$\NLGCDN_{\sigma}$-valued matrix function.

\item The Maurer-Cartan form $\N = \F_{-}^{-1} d \F_{-}$ of $\F_-$
 has the form
 \begin{equation}\label{eq:normalizedpot}
 \N(z, \l) = \F_{-}(z, \l)^{-1} d \F_{-}(z, \l)= 
 \left(\l^{-1}\xi(z),  -\l^{-1}\xi(z)\right),
 \end{equation}
 where $\xi$ is a meromorphic $1$-form on $\D$ with values in 
 $\mathfrak g^{\C}$.
\end{enumerate}
\end{Theorem}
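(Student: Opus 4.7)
The strategy is to adapt the DPW loop group scheme of \cite{DPW} to the twisted double loop group $\LGCD_\sigma$: produce the Birkhoff splitting on an open dense subset of $\D$, show by direct computation that the negative factor $\F_-$ is holomorphic in $z$ with the prescribed form of its Maurer-Cartan form, and finally promote the local splitting to a global meromorphic one. By Theorem \ref{thm:Birkhoffdouble} the Birkhoff big cell is open and dense in $\LGCD_\sigma$ and multiplication on it is an analytic diffeomorphism, so there is a closed, nowhere dense set $S_0 \subset \D$ such that on $\D \setminus S_0$ the splitting \eqref{Birkhoff-split} produces unique analytic factors $\F_- \in \NLGCDN_\sigma$ and $\V_+ \in \PLGCD_\sigma$.

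The key computation is the identity
\be
\F_-^{-1}\,d\F_- \;=\; \V_+\A_\lambda\V_+^{-1}-(d\V_+)\V_+^{-1},
\ee
obtained by differentiating $\F_-=\F_\lambda\V_+^{-1}$. Since $\V_+ \in \PLGCD_\sigma$, the form $(d\V_+)\V_+^{-1}$ has only non-negative Fourier powers of $\lambda$; and $\A_\lambda = \lambda^{-1}\A_\P' + \A_\K + \lambda\A_\P''$ has lowest power $\lambda^{-1}$. On the other hand, $\F_- \in \NLGCDN_\sigma$ forces the left-hand side to have only strictly negative powers of $\lambda$. Matching coefficients of $\lambda$: the terms of degree $\leq -2$ vanish automatically, the non-negative coefficients determine $(d\V_+)\V_+^{-1}$ inductively, and the only surviving negative-degree contribution is
\be
\F_-^{-1}d\F_- \;=\; \lambda^{-1}\,v_0\,\A_\P'\,v_0^{-1},\qquad v_0 := \V_+|_{\lambda=0}.
\ee
By the $\sigma$-twisting, $v_0$ lies in the diagonal subgroup $\K$, hence has the form $(v_0^{(1)},v_0^{(1)})$; and since $\A_\P' = \tfrac12(-\alpha',\alpha')$ is a $(1,0)$-form, this yields $\F_-^{-1}d\F_- = (\lambda^{-1}\xi,-\lambda^{-1}\xi)$ with $\xi := -\tfrac12\,\ad(v_0^{(1)})\alpha'$, a $\mathfrak g^{\mathbb C}$-valued $(1,0)$-form. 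This is exactly \eqref{eq:normalizedpot}. Purity of type $(1,0)$ immediately gives $\bar\partial\F_- = 0$, so $\F_-$ is holomorphic in $z$; and the Maurer-Cartan equation for $\N$, compared coefficient by coefficient in $\lambda$, reduces to $d\xi = 0$, so $\xi$ is holomorphic on $\D \setminus S_0$.

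The main remaining obstacle is to extend $\F_-$ meromorphically across $S_0$, and thereby conclude (1) with $S \subseteq S_0$ discrete. This is the standard meromorphic Birkhoff factorization for holomorphic curves into $\LGCD_\sigma$: once it is known that $\F_-$ is $z$-holomorphic on $\D \setminus S_0$, the failure locus of the normalized splitting is cut out by the vanishing of a holomorphic (Fredholm-determinant-type) condition on $\D$, which must be either all of $\D$ (excluded, since $\D\setminus S_0 \neq \emptyset$) or a discrete subset; across that discrete remainder $\F_-$ admits meromorphic continuation by the usual loop-group arguments, completing the proof.
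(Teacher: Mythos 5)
Your proposal is correct and takes essentially the same route as the paper, which itself gives no argument beyond ``the usual argument shows the following, see [DPW]'': your Maurer--Cartan computation $\F_{-}^{-1}d\F_{-}=\V_{+}\A_{\lambda}\V_{+}^{-1}-(d\V_{+})\V_{+}^{-1}$, the matching of $\lambda$-powers, the identification of $v_{0}=\V_{+}|_{\lambda=0}$ as diagonal via the twisting, and the resulting $\xi=-\tfrac{1}{2}\ad(v_0^{(1)})\alpha^{\prime}$ all agree with the explicit version the paper carries out later for $G(\mu_1,\mu_2)$. The only thin spot is your justification that the failure locus of the splitting is cut out by a function holomorphic on all of $\D$: this does not follow merely from the holomorphy of $\F_{-}$ off $S_0$, but requires first gauging $\F_{\lambda}$ by a positive loop $\W_{+}$ solving $\W_{+}^{-1}\bar{\partial}\W_{+}=\A_{\lambda}^{(0,1)}$ so as to replace $\F_{\lambda}$ by a $z$-holomorphic frame with the same Birkhoff behaviour; since the paper defers the entire proof to [DPW], this matches its level of detail.
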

\subsection{}
 The converse procedure is as follows:
 Consider a meromorphic $1$-form $\N$ on $\D$ of the form stated  
 in \eqref{eq:normalizedpot}.

 {\bfseries Step 1.} Solve the pair of ordinary differential equations:
\be
 d \R_{-} = \R_{-}\> \N
\ee
 with any initial condition $\R_{-}(z_0) \in \G^{\C} (= G^{\C} \times G^{\C})$
 at some base point $z_0 \in \D$  and assume that the solution 
 $\R_{-}$  is  meromorphic and takes values in $\LGCD$. 

 {\bfseries Step 2.} From the Iwasawa decomposition in Theorem 
 \ref{thm:Iwasawadouble} we obtain (for all $z \in \D$ for which $\R_{-} \in \tilde{\P}_{\G}$):
\be
 \R_{-} = \F \>\W_{+}, \;\;(\F \in \LGD_{\sigma}, \; \W_+ \in \PLGCD_{\sigma}),
\ee
 where $\F$ and $\W_+$ have the form
\be
 \F(z, \bar z,  \l)= \left(F(z, \bar z, \l), F(z, \bar z, -\l)\right), \;\;
 \W_+(z, \bar z,  \l)= \left(W_+(z, \bar z, \l), W_+(z, \bar z, -\l)
 \right).
\ee 
 There is freedom in this decomposition. 
 Setting 
\be
 \hat \F(z, \bar z, \l) = (F(z, \bar z, \l) F(z, \bar z, 1)^{-1}, \;
 F(z, \bar z, - \l) F(z, \bar z, 1)^{-1}),
\ee
 we obtain the following Iwasawa decomposition, where now 
\be
 \R_- = \hat \F \>\hat \W_+ \;\;\mbox{and} \;\;
 \hat \F(z, \bar z, 1)= (e, \varphi).
\ee
 In this case, $\hat \A_{\l} = \hat \F^{-1} d \hat \F$ has 
 the form \eqref{eq:Alambda} and  for $\l =1$ 
 we obtain $\hat \A_{\l =1} 
 = (0, \alpha)$. Hence $\hat \A_{\l}$ is of the form \eqref{eq:Alambda2}.
 Setting 
 $\hat \F(z, \bar z, \l) = (\hat F(z, \bar z, \l),  \hat F(z, \bar z,- \l))$ 
 we know $\hat F(z, \bar z, \l= 1) = \mathrm{id}$
 and $\hat F(z, \bar z, \l =-1) = \varphi(z, \bar z)$. 
\begin{Theorem}
 The map $\varphi (z, \bar z): = \hat F(z, \bar z, -1)$ 
 is $\neutral$-harmonic. Moreover, 
\be
 \hat \varphi(z, \bar z, \l) = \hat F(z, \bar z, -\l) 
 \hat F(z, \bar z, \l )^{-1}
\ee
 is a harmonic map into $(G, \neutral)$ for all 
 $\lambda \in \mathbb S^1$. 
 The map $\hat{F}(z,\bar{z},\l)$ is an extended solution of $\varphi$.
\end{Theorem}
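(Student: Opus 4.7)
The plan is to verify that the pulled-back Maurer-Cartan form $\hat{\A}_{\l} := \hat{\F}^{-1} d \hat{\F}$ has precisely the structural form given by \eqref{eq:Alambda2}; Theorem \ref{thm:Alambda} then immediately yields $\neutral$-harmonicity of the map $\varphi = \hat F(\cdot, -1)$, while the extended solution property is built into the definition of $\alpha_\l$.

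To carry out this verification, I will start from the Iwasawa decomposition $\R_- = \hat{\F}\, \hat{\W}_+$ and differentiate to obtain the gauge relation
\[
\hat{\A}_\l = \hat{\W}_+\, \N\, \hat{\W}_+^{-1} - d\hat{\W}_+ \cdot \hat{\W}_+^{-1}.
\]
Since $\N$ is concentrated on $\l^{-1}$ and $\hat{\W}_+$ takes values in $\Lambda^{+}\G^{\mathbb C}_{\sigma}$, the right-hand side has Fourier expansion supported on integers $\ge -1$. On the other hand, $\hat{\F}$ takes values in $\Lambda \G_{\sigma}$, and applying the anti-holomorphic involution defining the real form $G \subset G^{\mathbb C}$ reverses the sign of the Fourier exponents; combining the two bounds forces $\hat{\A}_\l$ to be supported on $\{\l^{-1},\, \l^0,\, \l\}$. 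Writing $\hat{\A}_\l = (\beta_\l, \beta_{-\l})$ according to the $\sigma$-twist and $\beta_\l = \l^{-1} B_{-1} + B_0 + \l\, B_1$, I then pin down the coefficients by two ingredients: only the $\l$-independent part of $\hat{\W}_+$ contributes to $B_{-1}$, which inherits the $(1,0)$-type from $\xi$, so $B_{-1}$ is a $(1,0)$-form and, by reality, $B_1 = \overline{B_{-1}}$ is $(0,1)$. Combined with the normalization $\hat{\F}(\cdot, 1) = (e, \varphi)$, which forces $\beta_\l|_{\l = 1} = 0$ and $\beta_\l|_{\l = -1} = \alpha := \varphi^{-1} d\varphi$, an easy linear algebraic manipulation yields $B_{-1} = -\tfrac{1}{2}\alpha^{\prime}$, $B_0 = \tfrac{1}{2}\alpha$, $B_1 = -\tfrac{1}{2}\alpha^{\prime \prime}$, which is exactly \eqref{eq:Alambda2}. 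Since $\hat{\A}_\l$ is a Maurer-Cartan form, flatness of $d + \hat{\A}_\l$ for all $\l \in \mathbb S^1$ is automatic, so Theorem \ref{thm:Alambda} produces $\neutral$-harmonicity of $\varphi$, and the identity $\hat F^{-1} d\hat F = \alpha_\l$ shows that $\hat F$ is an extended solution for $\varphi$ in the sense of Theorem \ref{thm:familyofconnections}.

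For the harmonicity of $\hat \varphi(z, \bar z, \l_0) = \hat F(z, \bar z, -\l_0)\hat F(z, \bar z, \l_0)^{-1}$ at arbitrary $\l_0 \in \mathbb S^1$, I will use a reparametrization trick. Setting $\tilde F(z, \bar z, \l) := \hat F(z, \bar z, \l \l_0)\hat F(z, \bar z, \l_0)^{-1}$, one has $\tilde F(\cdot, 1) = e$ and $\tilde F(\cdot, -1) = \hat \varphi(\cdot, \l_0)$. A direct computation yields
\[
\tilde F^{-1} d\tilde F = \tfrac{1}{2}(1 - \l^{-1})\tilde \alpha^{\prime} + \tfrac{1}{2}(1 - \l)\tilde \alpha^{\prime \prime},
\]
with $\tilde \alpha^{\prime} = \l_0^{-1} \ad(\hat F(\l_0)) \alpha^{\prime}$ and $\tilde \alpha^{\prime \prime} = \l_0\, \ad(\hat F(\l_0)) \alpha^{\prime \prime}$. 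Because $\hat F(\l_0) \in G$ and $\overline{\l_0^{-1}} = \l_0$ on $\mathbb S^1$, the $1$-form $\tilde \alpha := \tilde \alpha^{\prime} + \tilde \alpha^{\prime \prime}$ is real, so $\tilde F$ is again an extended solution, and Theorem \ref{thm:familyofconnections} applied at $\l = -1$ produces the $\neutral$-harmonicity of $\hat \varphi(\cdot, \l_0)$.

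The most delicate step will be the Fourier-support argument for $\hat{\A}_\l$: both the support bound coming from $\N$ and the dual bound coming from the real-form condition must be tracked compatibly under the $\sigma$-twist, and it is here that the symmetric space structure of $\G = G \times G / \triangle$ enters essentially, rather than merely as notational convenience.
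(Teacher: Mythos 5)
Your proposal is correct and follows the route the paper intends: the theorem is stated without a written proof, the paragraph preceding it simply asserting that $\hat{\mathcal A}_{\lambda}=\hat{\mathcal F}^{-1}d\hat{\mathcal F}$ has the form \eqref{eq:Alambda2} so that Theorem \ref{thm:Alambda} applies, and your Fourier-support bound from $\mathcal N$ and $\hat{\mathcal W}_{+}$, the reality bound from $\hat{\mathcal F}\in\Lambda\mathcal G_{\sigma}$, and the normalizations at $\lambda=\pm 1$ are exactly the standard justification of that assertion. The reparametrization argument for $\hat\varphi(\cdot,\lambda_0)$ is likewise the classical one the paper defers to the literature (compare the remark following Proposition \ref{ZCRforG}), and you carry it out correctly.
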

 For many purposes it is very convenient to start the construction scheme with 
 a holomorphic potential (as opposed to a meromorphic potential).
 In this case one will need to admit more (usually infinitely many) powers of 
 $\lambda$ in a Fourier expansion of the potential. The construction scheme just outlined 
 above works verbatim in the same way
 and produces a $\neutral$-harmonic map into $G$.
 To make sure that starting from some holomorphic potential we do not miss 
 any harmonic maps we state that the proof given in the appendix of \cite{DPW}
 can be carried out mutatis mutandis and we obtain:
\begin{Theorem}
 Let $\mathbb{D}$ be a non-compact simply-connected Riemann surface $\D$, 
 then for every harmonic map $\varphi$
 from $\mathbb{D}$ to a Lie group $G,$
 equipped with the neutral connection $\neutral$, 
 there exists a holomorphic potential defined on $\mathbb{D}$
 which generates this harmonic map $($actually an 
 $\mathbb S^1$-family of $\neutral$-harmonic 
 maps $\varphi_{\lambda}$ with  $\varphi = \varphi_{\lambda =1}$$)$
 by the construction scheme outlined above.
\end{Theorem}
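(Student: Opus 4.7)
The plan is to adapt, essentially mutatis mutandis, the argument of the appendix of \cite{DPW} to the present twisted loop group $\LGCD_\sigma$. Given a $\neutral$-harmonic map $\varphi:\D\to G$, I would start from its extended frame $\hat\F_\lambda:\D\to\LGCD_\sigma$, normalized at a base point $z_0\in\D$ so that $\hat\F_\lambda(z_0)=(\id,\id)$; by Theorem \ref{thm:Alambda} its Maurer-Cartan form $\hat\A_\lambda=\hat\F_\lambda^{-1}d\hat\F_\lambda$ has the form \eqref{eq:Alambda}. The object to construct is a \emph{global} gauge $\V_+:\D\to\PLGCD_\sigma$ such that the product $\hat\F_\lambda\V_+$ depends holomorphically on $z$. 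Once such $\V_+$ is in hand, the $1$-form $\eta:=(\hat\F_\lambda\V_+)^{-1}d(\hat\F_\lambda\V_+)=\V_+^{-1}\hat\A_\lambda\V_+ + \V_+^{-1}d\V_+$ is automatically of type $(1,0)$, holomorphic on $\D$, and its Fourier expansion in $\lambda$ is supported in degrees $\geq -1$; this is precisely a holomorphic potential for the construction scheme above, and by design the two-step Iwasawa procedure applied to $\eta$ recovers $\hat\F_\lambda$, hence $\varphi=\hat F(z,\bar z,-1)$ and the whole $\mathbb{S}^1$-family $\varphi_\lambda$.

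The holomorphy condition $\bar\partial(\hat\F_\lambda\V_+)=0$ translates into the linear PDE
\begin{equation*}
\bar\partial\V_+=-(\hat\A_\lambda)''\V_+,
\end{equation*}
where $(\hat\A_\lambda)''=\A_\K''+\lambda\A_\P''$ involves only the Fourier modes $\lambda^0$ and $\lambda^1$. I would expand $\V_+(z,\bar z,\lambda)=\sum_{k\geq 0}v_k(z,\bar z)\lambda^k$ with $v_0(z_0)=(\id,\id)$ and $v_k(z_0)=0$ for $k\geq 1$; the PDE then decouples into a cascade of inhomogeneous $\bar\partial$-problems $\bar\partial v_k=-\A_\K''v_k-\A_\P''v_{k-1}$ (with the convention $v_{-1}:=0$). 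Since $\D$ is a non-compact simply-connected Riemann surface, hence Stein by the Behnke-Stein theorem, each of these $\bar\partial$-equations admits a global smooth solution, and one may freely modify any solution by an arbitrary holomorphic correction.

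The hard part, and where \cite{DPW} does the real technical work, will be forcing the formal series $\V_+=\sum v_k\lambda^k$ to converge in the weighted Wiener topology in which $\LGCD_\sigma$ is modeled as a Banach Lie group. The plan is to fix a compact exhaustion $K_1\subset K_2\subset\cdots\nearrow\D$ and to build the $v_k$ inductively, at each stage adding to $v_k$ a holomorphic correction (available by the Runge theorem on Stein manifolds) so that its Banach norm on $K_n$ is small enough to keep $\sum_k\|v_k\|_{K_n}$ summable for every $n$. Because $\LGCD_\sigma$ is the $\sigma$-twisted product version of the loop groups handled in \cite{DPW}, and because $\sigma$ preserves the relevant subgroups and their Fourier decompositions, this inductive Runge argument transfers without any essential modification. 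Once convergence is secured, $\V_+$ is a bona fide smooth map $\D\to\PLGCD_\sigma$, $\hat\F_\lambda\V_+$ is $z$-holomorphic, and $\eta$ is the sought-for holomorphic potential generating $\varphi$.
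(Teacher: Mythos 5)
Your proposal is correct and follows essentially the same route as the paper, which itself only states that the argument of the appendix of \cite{DPW} carries over mutatis mutandis to the twisted double loop group $\LGCD_{\sigma}$: namely, gauge the extended frame by a map $\V_+$ into $\PLGCD_{\sigma}$ solving $\bar\partial\V_+=-(\hat\A_\lambda)''\V_+$ so that $\hat\F_\lambda\V_+$ becomes $z$-holomorphic, and read off the holomorphic potential from its Maurer--Cartan form. The only (minor) divergence is in the convergence step: \cite{DPW} settles it by directly invoking solvability of the $\bar\partial$-problem with values in the Banach Lie group $\Lambda^{+}\G^{\C}_{\sigma}$ over a non-compact (hence Stein) Riemann surface, rather than your mode-by-mode construction with Runge-type holomorphic corrections, but both achieve the same end.
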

\section{Examples}\label{sc:Examples}
 In this section, as an application of Section \ref{sc:DPW}, 
 we give the generalized Weierstrass type representation for 
 $\neutral$-harmonic maps into the $3$-dimensional solvable Lie groups. 
 We first remark that 
 the case of the $3$-dimensional Heisenberg group $\mathrm{Nil}_3$ 
 was discussed in \cite{BD} in detail. 
 Thus we omit this case.
\subsection{$3$-dimensional solvable Lie groups}
 Let us consider the $2$-parameter family of $3$-dimensional solvable Lie groups 
 $G(\mu_1, \mu_2)$ given in \eqref{eq:solvable}. 
 For the loop group $\Lambda G(\mu_1, \mu_2)^{\C}$, 
 the respective Birkhoff and Iwasawa decompositions 
 in Theorem \ref{thm:Birkhoff} and \ref{thm:Iwasawa} 
 are given explicitly  as follows.
\begin{Theorem}\label{thm:BirkhoffGmu}
 The Birkhoff decomposition of $\Lambda G(\mu_1,\mu_2)^{\mathbb{C}}$ is 
 given by 
\be
\Lambda G(\mu_1,\mu_2)^{\mathbb{C}}
=
\Lambda^{-}_{*} G(\mu_1,\mu_2)^{\mathbb{C}}\cdot
\Lambda^{+} G(\mu_1,\mu_2)^{\mathbb{C}}.
\ee
 Every element 
\be
 g(\l) = 
 \begin{pmatrix}
 e^{\mu_1 x^3(\l)} & 0 & x^1(\l)  \\
 0 & e^{\mu_2 x^3(\l)}  & x^2(\l)  \\
 0 & 0 & 1
 \end{pmatrix}
\ee
 of $\Lambda G(\mu_1,\mu_2)^{\mathbb{C}}$
 is globally decomposed as
\be
 g(\lambda)= g_{-} (\l) g_{+} (\l),
\ee
where 
\be
\;\;
g_{\pm}(\l) =
\begin{pmatrix}
 e^{\mu_1 x_{\pm}^3 (\l)} & 0 & x_{\pm}^1 (\l) \\
 0 & e^{\mu_2 x_{\pm}^3 (\l)} & x^2_{\pm} (\l) \\
 0 & 0 & 1 
\end{pmatrix}.
\ee
 Here using 
 the expansion $x^3(\l) = \sum_{j=-\infty}^{\infty} x_j^3 \l^j$,
 the functions $x^3_{\pm}(\l)$ are given by
\be
 x^{3}_{+}(\l) =\sum_{j\geq 0}x^{3}_{j}\l^j,
 \quad x^{3}_{-}(\l)=\sum_{j< 0}x^{3}_{j}\l^{j},
\ee
 and  using the expansion
\be
 \exp\left(- \mu_k x^3_{-}(\l) \right) x^k(\l)
 = \sum_{j=-\infty}^{\infty} \hat x_j^k \l^j\;\;(k =1, 2),
\ee
 $x^k_{\pm}(\l), \;(k=1, 2)$ are given by
\be
\left\{
\begin{array}{l}
 x^{k}_{+}(\l) =\sum_{j\geq 0}\hat x^{k}_{j}\l^{j},\;\;(k=1, 2), \\[0.1cm]
 x^{k}_{-}(\l)=(\sum_{j< 0}\hat x^{k}_{j}\l^{j})\exp(\mu_k x^3_{-}(\l)), \;\;(k=1, 2).
\end{array}
\right.
\ee
\end{Theorem}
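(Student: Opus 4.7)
The strategy is direct verification. Since $G(\mu_1,\mu_2)$ is solvable, its Levi decomposition $G = H \ltimes B$ has trivial reductive part $H$. Consequently the index set $\Lambda^d H^{\C}$ in Theorem \ref{thm:Birkhoff} reduces to the identity, the Cartan factor $\mathit C$ disappears, and the general Birkhoff decomposition collapses to the statement that the big cell $\Lambda^-_* G(\mu_1,\mu_2)^\C \cdot \Lambda^+ G(\mu_1,\mu_2)^\C$ is open and dense. To upgrade this to the \emph{global} decomposition asserted in the theorem, it suffices to explicitly produce $g_-$ and $g_+$ for every $g \in \Lambda G(\mu_1,\mu_2)^\C$, which is what the stated formulas do.

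\textbf{Construction.} A direct matrix multiplication yields
\be
g_-(\l)\, g_+(\l) = \begin{pmatrix} e^{\mu_1(x^3_-+x^3_+)} & 0 & e^{\mu_1 x^3_-} x^1_+ + x^1_- \\ 0 & e^{\mu_2(x^3_-+x^3_+)} & e^{\mu_2 x^3_-} x^2_+ + x^2_- \\ 0 & 0 & 1 \end{pmatrix},
\ee
so matching entries with $g(\l)$ reduces the problem to solving the scalar system
\be
x^3 = x^3_- + x^3_+, \qquad x^k = e^{\mu_k x^3_-} x^k_+ + x^k_-, \quad k=1,2.
\ee
For $x^3$ this is the standard Fourier splitting $\Lambda \C = \Lambda^-_* \C \oplus \Lambda^+ \C$. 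Given $x^3_-$, rewrite the second equation as $e^{-\mu_k x^3_-} x^k = x^k_+ + e^{-\mu_k x^3_-} x^k_-$, introduce $\hat x^k := e^{-\mu_k x^3_-} x^k$, apply the scalar splitting $\hat x^k = \hat x^k_+ + \hat x^k_-$, and set $x^k_+ := \hat x^k_+$ together with $x^k_- := e^{\mu_k x^3_-} \hat x^k_-$. These are precisely the formulas stated in the theorem.

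\textbf{Main obstacle and uniqueness.} The one nontrivial analytical point is to verify that $e^{\pm \mu_k x^3_-(\l)} \in \Lambda^-\C$, i.e., it extends holomorphically to $\overline{\C}\setminus \D$ with value $1$ at $\l = \infty$, so that $\hat x^k := e^{-\mu_k x^3_-} x^k$ is a well-defined Wiener-class loop amenable to the scalar splitting, and symmetrically the product $\hat x^k_- \cdot e^{\mu_k x^3_-}$ lies in $\Lambda^-_*\C$. This follows because $\Lambda^-\C$ is a Banach algebra in the weighted Wiener norm used in \cite{BD1} and $x^3_-$ has no constant term, so the exponential series converges in $\Lambda^-\C$ and takes the value $e^0 = 1$ at $\l = \infty$. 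Uniqueness is then immediate: any element of $\Lambda^-_* G(\mu_1,\mu_2)^\C \cap \Lambda^+ G(\mu_1,\mu_2)^\C$ has diagonal entries that are entire $\C^\ast$-valued loops equal to $1$ at $\infty$ and off-diagonal entries that are entire and vanish at $\infty$; by Liouville these must all be constants, forcing the identity loop.
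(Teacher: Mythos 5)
Your proposal is correct, and it takes essentially the approach the paper intends: the paper offers no written proof of this theorem (it relies on the explicit formulas in the statement together with the subsequent remark citing \cite{BD1} for the globality of the Birkhoff splitting of solvable loop groups), and your direct matrix verification, the reduction to the scalar Fourier splitting via $\hat x^k = e^{-\mu_k x^3_-}x^k$, and the Wiener-algebra check that $e^{\pm\mu_k x^3_-}\in\Lambda^-\C$ supply precisely the computation the authors leave implicit. Note that your explicit construction already establishes globality on its own, so the appeal to the general Birkhoff theorem for openness and density of the big cell is not even needed.
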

\begin{Theorem}\label{thm:Iwasawa-sol}
 The Iwasawa decomposition of $\Lambda G(\mu_1,\mu_2)^{\mathbb{C}}$ is 
 given by 
\be
\Lambda G(\mu_1,\mu_2)^{\mathbb{C}}
=
\Lambda G(\mu_1,\mu_2)\cdot
\Lambda^{+} G(\mu_1,\mu_2)^{\mathbb{C}}.
\ee
 Every element
\be
 g(\l) = 
 \begin{pmatrix}
 e^{\mu_1 x^3(\l)} & 0 & x^1(\l)  \\
 0 & e^{\mu_2 x^3(\l)}  & x^2(\l)  \\
 0 & 0 & 1
 \end{pmatrix}
\ee
 of $\Lambda G(\mu_1,\mu_2)^{\mathbb{C}}$ is globally decomposed as
\be
 g(\l)= \tilde g(\l)g_{+}(\l),
\ee
 where 
\be
 \tilde g(\l) =
 \begin{pmatrix}
 e^{\mu_1 \tilde x^3(\l)} & 0 & \tilde x^1(\l)  \\
 0 & e^{\mu_2 \tilde x^3(\l)}  & \tilde x^2(\l)  \\
 0 & 0 & 1
 \end{pmatrix}, \;\;
 g_+ (\l) =
 \begin{pmatrix}
 e^{\mu_1 x_+^3(\l)} & 0 & x_+^1(\l)  \\
 0 & e^{\mu_2 x_+^3(\l)}  & x_+^2(\l)  \\
 0 & 0 & 1
 \end{pmatrix}.
\ee
 Here using the expansion $x^3(\l) = \sum_{j=-\infty}^{\infty} x_j^3 \l^j$, 
 the functions $\tilde x^3(\l)$ and $x^3_+(\l)$ are given by,
\be
x^{3}_{+}(\l)
=\sum_{j\geq 0}x^{3}_j\l^j
-\sum_{j<0}\overline{x^{3}_j}\>\l^{-j}, \;\;\;
 \tilde x^{3}(\l)=\sum_{j<0} \left(x^3_j \l^j +\overline{x^3_j} \l^{-j}\right),
\ee
 and using the expansions of 
\be
 \exp\left(- \mu_k \tilde x_3(\l)\right) x^k(\l) 
 =  \sum_{j=-\infty}^{\infty} \hat x_j^k \l^j,
 \;\;\;(k =1, 2), 
\ee
 $\tilde x^k(\l)$ and $x^k_+(\l),\;(k =1, 2)$ are given by
\be
 x^{k}_{+}(\l)=\sum_{j\geq  0} \hat x^k_j\l^j-\sum_{j<0}\overline{\hat x^k_j}\l^{-j},
 \;\;\;
 \tilde x^{k}(\l)=\sum_{j< 0} \left(\hat x^k_j \l^j+\overline{\hat x^k_j}\l^{-j}\right)
 e^{\mu_k \tilde x^3(\l)}, 
 \;\;\;(k =1, 2).
\ee
\end{Theorem}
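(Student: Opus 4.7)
The plan is to verify the stated decomposition directly, exploiting the simple upper-triangular matrix structure of $G(\mu_1,\mu_2)$. The multiplication in $G(\mu_1,\mu_2)^{\mathbb C}$ gives
\[
\tilde g(\lambda)\, g_+(\lambda)=
\begin{pmatrix}
 e^{\mu_1(\tilde x^3+x_+^3)} & 0 & \tilde x^1+e^{\mu_1 \tilde x^3}x_+^1\\
 0 & e^{\mu_2(\tilde x^3+x_+^3)} & \tilde x^2+e^{\mu_2 \tilde x^3}x_+^2\\
 0 & 0 & 1
\end{pmatrix},
\]
so the factorization $g=\tilde g\, g_+$ is equivalent to the three scalar identities $x^3=\tilde x^3+x_+^3$ and $x^k=\tilde x^k+e^{\mu_k\tilde x^3}x_+^k$ for $k=1,2$. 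The goal is to produce $\tilde g\in \Lambda G(\mu_1,\mu_2)$ (i.e.\ real entries on $\mathbb S^1$) and $g_+\in \Lambda^+ G(\mu_1,\mu_2)^{\mathbb C}$ (i.e.\ entries extending holomorphically to $\mathcal D$) satisfying these three identities.

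First I would handle $x^3$. Starting from the Laurent expansion $x^3(\lambda)=\sum_{j\in\mathbb Z}x^3_j\lambda^j$, define $\tilde x^3$ and $x^3_+$ by the formulas in the statement. Additivity $\tilde x^3+x^3_+=x^3$ is obvious since the two sums together cover every Fourier mode. On $\mathbb S^1$ one has $\overline{\lambda^{j}}=\lambda^{-j}$, hence $\tilde x^3=\sum_{j<0}x^3_j\lambda^j+\overline{\sum_{j<0}x^3_j\lambda^j}$ is real, and $x^3_+$ involves only non-negative powers of $\lambda$, so it extends holomorphically to $\mathcal D$. Next, since $\tilde x^3$ is now fixed, form $y^k(\lambda):=e^{-\mu_k\tilde x^3(\lambda)}x^k(\lambda)=\sum_{j}\hat x^k_j\lambda^j$ (this is a well-defined loop because $e^{\pm\mu_k\tilde x^3}$ lies in $\Lambda G(\mu_1,\mu_2)^{\mathbb C}$-friendly scalars), and define $\tilde x^k$ and $x^k_+$ as stated. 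A direct check gives
\[
\tilde x^k+e^{\mu_k\tilde x^3}x_+^k
=e^{\mu_k\tilde x^3}\Bigl[\sum_{j<0}\hat x^k_j\lambda^j+\sum_{j<0}\overline{\hat x^k_j}\lambda^{-j}+\sum_{j\geq0}\hat x^k_j\lambda^j-\sum_{j<0}\overline{\hat x^k_j}\lambda^{-j}\Bigr]
=e^{\mu_k\tilde x^3}y^k=x^k,
\]
as required. Reality of $\tilde x^k$ on $\mathbb S^1$ follows because $e^{\mu_k\tilde x^3}$ is real there and the bracket multiplying it equals $\sum_{j<0}\hat x^k_j\lambda^j+\overline{\sum_{j<0}\hat x^k_j\lambda^j}$; holomorphic extendability of $x^k_+$ to $\mathcal D$ is clear since each of its two sums involves only non-negative powers of $\lambda$.

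Uniqueness and globality round out the argument. Any other decomposition $g=\tilde g'g'_+$ would force $\tilde g^{-1}\tilde g'=g_+(g'_+)^{-1}\in\Lambda G(\mu_1,\mu_2)\cap\Lambda^+G(\mu_1,\mu_2)^{\mathbb C}$; tracing through the three scalar entries and applying the maximum principle (or, equivalently, the matching of Fourier indices together with the reality pairing used above) shows this intersection consists only of constant loops in $G(\mu_1,\mu_2)$, giving uniqueness up to a constant multiplier, which is pinned down by the explicit formulas. Because the construction was carried out for an arbitrary $g(\lambda)\in \Lambda G(\mu_1,\mu_2)^{\mathbb C}$ without any invertibility or big-cell hypothesis, the Iwasawa big cell in fact equals the whole loop group---this is the concrete manifestation, for this solvable group, of the general denseness/openness result quoted from \cite{BD1}.

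I expect the main technical obstacle to be keeping the twist by $e^{\mu_k\tilde x^3}$ consistent with both the reality condition on $\mathbb S^1$ and the holomorphicity condition on $\mathcal D$: unlike a purely abelian decomposition of a scalar loop, the $x^k$-splitting is not the naive Fourier split but the Fourier split of the \emph{conjugated} loop $y^k=e^{-\mu_k\tilde x^3}x^k$, and one must verify that re-multiplying by $e^{\mu_k\tilde x^3}$ does not spoil either condition. Once that is carefully tracked---the key point being that $e^{\mu_k\tilde x^3}$ is real on $\mathbb S^1$ and has no effect on the holomorphic-extension statement for $x_+^k$ (which is a separate summand not multiplied by this factor)---the rest of the proof reduces to bookkeeping with Laurent series.
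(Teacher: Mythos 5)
Your proof is correct and is exactly the computation the theorem's explicit formulas encode: the paper itself omits the argument (deferring globality to \cite{BD1}), and the intended verification is precisely your Laurent-series bookkeeping — split $x^3$ into a real-on-$\mathbb{S}^1$ part plus a $\mathcal{D}$-holomorphic part, then perform the same split on the conjugated loops $e^{-\mu_k\tilde{x}^3}x^k$ and re-multiply by $e^{\mu_k\tilde{x}^3}$, which preserves reality on $\mathbb{S}^1$ and does not touch the holomorphic factor. Your added observations (that the intersection $\Lambda G\cap\Lambda^+G^{\mathbb{C}}$ is the constant loops, and that the construction applies to every $g$ so the big cell is everything) are accurate and consistent with the remark following the theorem.
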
 
\begin{Remark}
 It is shown in \cite[Lemma 4.3 and Lemma 6.4]{BD1},
 the Birkhoff and Iwasawa decompositions for the loop group $\Lambda B^{\C}$ 
 of a simply-connected solvable Lie group $B^{\C}$ is {\it global}, 
 that is, the Birkhoff and the Iwasawa decompositions are given respectively as follows:
\begin{align*}
 \Lambda B^{\C} = \Lambda^{-}_* B^{\C} \cdot \Lambda^+ B^{\C},\;\;\;\;
 \Lambda B^{\C} = \Lambda B \cdot \Lambda^+ B^{\C}.
\end{align*}
\end{Remark}
 Let $\varphi$ be a $\neutral$-harmonic map parametrized as 
\be
 \varphi (z, \bar z) = 
 \begin{pmatrix}
  e^{\mu_1 \varphi^3(z, \bar z)} & 0 &\varphi^1(z, \bar z) \\
  0 & e^{\mu_2 \varphi^3(z, \bar z)} &\varphi^2(z, \bar z) \\
  0 & 0 & 1
 \end{pmatrix} : \mathbb D \to G(\mu_1, \mu_2) \subset \mathrm{GL}_3 \mathbb R.
\ee
 Here we assume that $\mathbb D$ is a simply-connected domain in $\mathbb C$ containing
 $0$ and $z \in \mathbb D$ is a conformal coordinate.
 Then it is easy to see that  
 \begin{equation}\label{eq:MCvarphi}
 \varphi^{-1} d \varphi = \alpha = \alpha^{\prime} + \alpha^{\prime \prime}
 = \begin{pmatrix}
  \mu_1 \varphi^3_z & 0 & e^{-\mu_1 \varphi^3} \varphi^1_z \\
  0 & \mu_2 \varphi^3_z & e^{-\mu_2 \varphi^3} \varphi^2_z \\
  0 & 0 & 0 
 \end{pmatrix} \> dz 
+ 
 \begin{pmatrix}
  \mu_1 \varphi^3_{\bar z} & 0 & e^{-\mu_1 \varphi^3} \varphi^1_{\bar z} \\
  0 & \mu_2 \varphi^3_{\bar z} & e^{-\mu_2 \varphi^3} \varphi^2_{\bar z} \\
  0 & 0 & 0 
 \end{pmatrix} \> d{\bar z}.
\end{equation}
 From Corollary \ref{coro:affharmonic}, $\varphi(z, \bar z)$ is $\neutral$-harmonic if and only if $\bar \partial \alpha^{\prime} - \partial \alpha^{\prime \prime}=0$ which is 
 equivalent to 
\be
 2 \varphi^j_{z \bar z} - \mu_j (\varphi^j_z \varphi^{3}_{\bar z} 
 + \varphi^j_{\bar z} \varphi^3_z) =0 \;\;(j = 1, 2),\;\;
 \varphi^3_{z \bar z} =0.
\ee
 We now set $\A = (0,\alpha)$ and $\A_{\K} = \frac{1}{2}(\alpha, \alpha)$, which takes 
 values in
 the fixed point set of the derivative of the involution $\sigma (a, b) = (b, a)$ for $
 (a, b) \in G(\mu_1, \mu_2) \times G(\mu_1, \mu_2)$, and the complement $\A_{\P} = 
 \frac{1}{2}(-\alpha, \alpha)$, that is, $\A = \A_{\K} + \A_{\P}$. Moreover 
 we decompose $\A_{\P}$ into its $(1,0)$ and $(0,1)$-parts as 
 $\A_{\P} = \A_{\P}^{\prime} + \A_{\P}^{\prime \prime}  = \frac{1}{2}
  ( -\alpha^{\prime}, \alpha^{\prime}) 
 + \frac{1}{2}  ( -\alpha^{\prime \prime}, \alpha^{\prime \prime})$
 and define $\A_{\l}$ as 
\be
 \A_{\l} = \l^{-1} \A_{\P}^{\prime} + \A_{\K} + \l \A_{\P}^{\prime \prime}.
\ee
 Since $\varphi$ is $\neutral$-harmonic, by Theorem \ref{thm:Alambda}, there exists a 
 $\F_{\l}$, which is a solution to the equation $\F_{\l}^{-1} d \F_{\l} = \A_{\l}$.  
 From now on, for convenience, we choose the base point to be $(0, 0)$  and there we assume 
 \begin{equation}\label{eq:initialzbarz}
 \F_{\l}|_{(z, \bar z) =(0,0)} = (\id,\; \id).
 \end{equation}
 Then, decomposing $\F_{\l}$ by the Birkhoff decomposition Theorem \ref{thm:BirkhoffGmu}, 
 we obtain  
 \begin{equation}\label{eq:BirkofF}
 \F_{\l} = \F_{-} \V_{+}.
 \end{equation}
 From Theorem \ref{thm:Normalizedpot}, we know that $\F_{-}$ depends only on $z$ and moreover 
 it is meromorphic on $\D$. 
 Then a direct computation shows the following theorem.
\begin{Theorem}[The normalized potentials]
 Let $\F_{-}$ be the loop defined in \eqref{eq:BirkofF}.
 Then $\F_{-}$ is meromorphic on $\D$ and the pair of normalized potentials 
\be
 \N_{-}(z, \l) = \F_{-}(z, \l)^{-1} d \F_{-}(z, \l) 
 = \left(\l^{-1}\xi(z), \;\;-\l^{-1}\xi(z)\right)
\ee
 is determined by
\begin{equation}\label{eq:xi}
 \xi(z) =  
 \begin{pmatrix}
  \mu_1 \xi^3(z) & 0 & \xi^1(z) \\
  0 & \mu_2 \xi^3(z) & \xi^2(z) \\
  0 & 0 & 0 \\
 \end{pmatrix} dz,
\end{equation}
 where $\xi^1, \xi^2$ and $\xi^3$ are meromorphic functions on $\D$ given
\begin{align*}
\xi^{1}(z) &=  -\frac{1}{2}e^{-\frac{1}{2} \mu_1 \varphi^3 (z, 0)} 
 \varphi^1_z(z, 0) +\frac{\mu_1 \varphi^3_z(z,0)}{8 \pi i}
 \left.\int_{\mathbb D} \frac{ e^{- \frac{1}{2} \mu_1 \varphi^3(\xi, \bar \xi)}
 \partial_{\bar \xi} \varphi^1(\xi, \bar \xi)}{\xi-z} \> d\xi \wedge d \bar \xi
 \>\right|_{\bar z=0}, \\
\xi^{2}(z) &=  -\frac{1}{2}e^{-\frac{1}{2} \mu_2 \varphi^3 (z, 0)} \varphi^2_z(z, 0)
 +\frac{\mu_2 \varphi^3_z(z,0) }{8 \pi i}
 \left. \int_{\mathbb D} \frac{ e^{- \frac{1}{2} \mu_2 \varphi^3(\xi, \bar \xi)}\partial_{\bar \xi} \varphi^2(\xi, \bar \xi)}{\xi-z} \> d\xi \wedge d \bar \xi
 \>\right|_{\bar z=0}, \\
\xi^3(z) & =- \frac{1}{2} \varphi^3_{z}(z,0).
\end{align*}
\end{Theorem}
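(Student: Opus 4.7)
The plan is to run the DPW procedure explicitly using the upper-triangular matrix realization \eqref{eq:solvable} of $G(\mu_1,\mu_2)$. The shape of $\xi(z)$ in \eqref{eq:xi} is immediate from Theorem \ref{thm:Normalizedpot} once one observes that the complexified Lie algebra $\mathfrak{g}(\mu_1,\mu_2)^{\mathbb C}$ consists of strictly upper-triangular matrices whose $(1,1)$ and $(2,2)$ entries are $\mu_1 a$ and $\mu_2 a$ for a common scalar $a$; this leaves only the three scalars $\xi^1,\xi^2,\xi^3$ to be identified. Meromorphy of $\F_-$ on $\D$ follows from the openness of the Birkhoff big cell in Theorem \ref{thm:BirkhoffGmu} together with the holomorphic dependence of $\F_{\l}(z,0,\l)$ on $z$: the big-cell condition fails at most on a discrete subset of $\D$.

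First I would compute the extended frame explicitly. Writing
\begin{equation*}
F_{\l} = \begin{pmatrix} e^{\mu_1 h_{\l}} & 0 & f^1_{\l} \\ 0 & e^{\mu_2 h_{\l}} & f^2_{\l} \\ 0 & 0 & 1 \end{pmatrix},
\end{equation*}
the equation $F_{\l}^{-1} dF_{\l} = \alpha_{\l}$ decouples into a diagonal equation for $h_{\l}$ and two exact-form equations for $f^k_{\l}$. Harmonicity of $\varphi^3$ on $\D$ supplies a holomorphic primitive $p(z)$ of $\varphi^3_z\, dz$ with $\varphi^3 = p(z) + \bar p(\bar z)$, and integration gives $h_{\l}(z,0) = \tfrac{1}{2}(1-\l^{-1}) p(z)$ together with
\begin{equation*}
 f^k_{\l}(z, 0) = \tfrac{1}{2}(1-\l^{-1}) \int_0^z e^{-\tfrac{\mu_k}{2}(1+\l^{-1}) p(\zeta)} \varphi^k_\zeta(\zeta, 0)\, d\zeta, \quad k=1,2.
\end{equation*}
Applying Theorem \ref{thm:BirkhoffGmu} to $F_{\l}(z,0,\l)$, the splitting $\tfrac{1}{2}(1-\l^{-1}) p(z) = \tfrac{1}{2} p(z) - \tfrac{1}{2}\l^{-1} p(z)$ gives $x^3_-(z,\l) = -\tfrac{1}{2}\l^{-1} p(z)$; the $\l^{-1}$ coefficient of $\mu_k\, dx^3_-$ then delivers $\xi^3(z) = -\tfrac{1}{2}\varphi^3_z(z,0)$ at once. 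For the off-diagonals I would form $\hat x^k(\l) = e^{\tfrac{\mu_k}{2}\l^{-1} p(z)} f^k_{\l}(z, 0)$ as prescribed by the theorem; a short manipulation shows $(F_-^{-1} dF_-)_{k,3} = d\chi^k + \mu_k \chi^k \, dx^3_-$ with $\chi^k = \sum_{j<0} \hat x^k_j \l^j$, so that (since $dx^3_-$ is $O(\l^{-1})$) the $\l^{-1}$ coefficient reduces simply to $d\hat x^k_{-1}$. Expanding $e^{\tfrac{\mu_k}{2}\l^{-1}(p(z)-p(\zeta))}$ in $\l^{-1}$, collecting the $\l^{-1}$ term of $\hat x^k$, and differentiating yields
\begin{equation*}
\xi^k(z) = -\tfrac{1}{2} e^{-\tfrac{\mu_k}{2}\varphi^3(z, 0)}\varphi^k_z(z, 0) + \tfrac{\mu_k \varphi^3_z(z, 0)}{4} \int_0^z e^{-\tfrac{\mu_k}{2}\varphi^3(\zeta, 0)}\varphi^k_\zeta(\zeta, 0)\, d\zeta.
\end{equation*}

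The remaining step is to replace the path integral by the Cauchy-transform type domain integral in the statement. This is accomplished by applying the Cauchy-Pompeiu formula to the smooth function $u = e^{-\tfrac{\mu_k}{2}\varphi^3}\varphi^k$ on $\D$: the integral $\tfrac{1}{2\pi i}\int_\D \tfrac{\partial_{\bar\xi} u}{\xi-z}\, d\xi\wedge d\bar\xi$ agrees (up to a holomorphic boundary-Cauchy term) with the analytic-continuation value ``$u(z,0)$'', and the chain rule applied to $\partial_{\bar\xi}u$ produces precisely the integrand of the theorem. The holomorphic boundary contribution is absorbed into the positive gauge $\V_+$ and hence does not appear in the normalized potential. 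I expect this last step to be the main obstacle, because the harmonic map $\varphi$ is a priori only smooth on $\D$, so ``evaluation at $\bar z = 0$'' must be interpreted through the canonical Cauchy-Pompeiu extension rather than by literal analytic continuation; one needs to verify carefully that the holomorphic contribution is genuinely swept into $\V_+$ and does not leak into the $\l^{-1}$ part of $\F_-^{-1} d\F_-$.
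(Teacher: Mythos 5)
Your derivation follows a genuinely different route from the paper's. The paper never integrates the frame: it gauges the extended frame by $\V_+$, writes $\F_-^{-1}d\F_-=\ad(\V_+)\A_\l-d\V_+\V_+^{-1}$, reads off the potential as the $\l^{-1}$-coefficient $-\tfrac{1}{2}\l^{-1}\ad(V_{+0})\alpha'\big|_{\bar z=0}$, and determines the zeroth Fourier coefficient $V_{+0}$ by forcing the $(0,1)$-part and the non-negative $\l$-powers to vanish; this yields $v^3_{+0}=\tfrac12\varphi^3$ and the inhomogeneous Cauchy--Riemann equation $(v^k_{+0})_{\bar z}=\tfrac12 e^{-\mu_k\varphi^3/2}\varphi^k_{\bar z}$, which the paper solves by H\"ormander's Cauchy transform. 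Your route --- integrate $F_\l$ explicitly along $\bar z=0$ (legitimate, since $\neutral$-harmonic maps are real-analytic, so the restriction is by genuine complexification) and then apply the explicit Birkhoff factorization of Theorem \ref{thm:BirkhoffGmu} --- avoids the $\bar\partial$-problem entirely and is self-contained. I checked your intermediate identities ($h_\l(z,0)$, $f^k_\l(z,0)$, $(F_-^{-1}dF_-)_{k,3}=d\chi^k+\mu_k\chi^k\,dx^3_-$, and the reduction of the $\l^{-1}$-coefficient to $d\hat x^k_{-1}$); they are all correct, as is the resulting path-integral expression for $\xi^k$.

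The genuine gap is the final conversion step, and it is worse than you anticipate: the holomorphic discrepancy is \emph{not} absorbed into $\V_+$. Since $\xi^k$ is the $(k,3)$-entry of $-\tfrac12\ad(V_{+0})\alpha'\big|_{\bar z=0}$ and $v^k_{+0}$ sits inside $V_{+0}$, replacing a solution of the $\bar\partial$-problem by another one (they differ by a holomorphic $h(z)$) changes $\xi^k$ by $\tfrac{\mu_k}{2}\varphi^3_z(z,0)\,h(z)$. By Cauchy--Pompeiu, the Cauchy-transform solution and your holomorphic primitive differ by the boundary term $\frac{1}{2\pi i}\int_{\partial\D}\frac{u(\xi)}{\xi-z}\,d\xi$, a nontrivial holomorphic function of the boundary data, so the two expressions for $\xi^k$ genuinely disagree in general. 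Concretely, on $G(\mu_1,2)$ with $\varphi^3=z+\bar z$ and $\varphi^2=e^{2(z+\bar z)}-1$, your formula gives $\xi^2\equiv-1$; applying Theorem \ref{thm:BirkhoffGmu} directly one finds that $F_-$ has $(2,2)$-entry $e^{-\l^{-1}z}$ and $(2,3)$-entry $e^{-\l^{-1}z}-1$, so $(F_-^{-1}dF_-)_{2,3}=-\l^{-1}dz$ exactly, confirming $\xi^2=-1$; whereas the literal Cauchy transform of $2e^{\xi+\bar\xi}$ over the unit disk carries the extra boundary term $\frac{1}{2\pi i}\int_{\partial\D}\frac{2e^{\xi+1/\xi}}{\xi-z}\,d\xi$, which is not the constant $2$, and produces a non-constant $\xi^2$. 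In other words, the Birkhoff splitting selects a \emph{specific} particular solution of the $\bar\partial$-problem --- namely the one whose restriction to $\bar z=0$ is your primitive $\tfrac12\int_0^z e^{-\mu_k\varphi^3(\zeta,0)/2}\varphi^k_\zeta(\zeta,0)\,d\zeta$, fixed by the normalizations $\F_-(\l=\infty)=\id$ and $\F_\l(0,0)=(\id,\id)$ --- and this is not, in general, the H\"ormander solution. Your computation is the trustworthy one; what cannot be completed is the identification with the Cauchy-transform form of the statement, and the paper's own proof does not justify that identification either (it imposes only the $\bar\partial$-equation, which leaves the holomorphic part of $v^k_{+0}$ undetermined).
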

\begin{proof}
 Let $F$ be the extended solution given by $\F_{\l} = (F(\l), F(-\l))$ and 
 $V_+$ the plus element given by $\V_+ =(V_+(\l), V_+(-\l))$.
 Then the Maurer-Cartan form for $\F_-$ can be computed as 
\begin{align}
 \F_-^{-1} d \F_{-} &= \V_+ \F_{\l}^{-1} d \F_{\l} \V_+- d \V_+ \V_+^{-1} \nonumber \\
 & = \left(\ad (V_+(\l)) (\alpha_{\l}) - d V_+(\l) V_+(\l)^{-1}, \; s(\l)
      \right),\label{eq:righthand}
\end{align}
 where the second component $s(\l)$ is just the first component 
 with the replacement $\l$ to $-\l$. Thus we only consider the first component.
 Since the left hand side is a meromorphic $1$-form on $\D$, the same is true for the 
 right hand side. 
 Using $\alpha_{\l} = \frac{1}{2}(1- \l^{-1}) \alpha^{\prime} + 
 \frac{1}{2}(1- \l) \alpha^{\prime \prime}$, we can rephrase the first 
 component of the right hand side of \eqref{eq:righthand} as 
\begin{equation}\label{eq:alphagauge}
 -\frac{\l^{-1}}{2} \ad (V_+) \alpha^{\prime} + 
 \frac{1}{2} \ad (V_+) \alpha^{\prime} - \partial_{z}V_+ V_+^{-1} \>dz, 
 \;\;\;\mbox{(the $(1, 0)$-part)},
\end{equation}
 and 
\begin{equation}\label{eq:alphagauge2}
 - \frac{\l }{2}\ad (V_+) \alpha^{\prime \prime} + 
 \frac{1}{2} \ad (V_+) \alpha^{\prime \prime} - \partial_{\bar z}V_+ V_+^{-1} \>d\bar z, 
 \;\;\;\mbox{(the $(0, 1)$-part)}.
\end{equation}
 Since the normalized potential  $\F_{-} d \F_{-}$ is the $\l^{-1}$-term of 
 the $(1, 0)$-part, we can compute it by 
 using $V_+ = V_{+0} + V_{+1} \l + \cdots$;
\be
 \F_-^{-1} d \F_{-} =\left(-\frac{\l^{-1}}{2} \ad (V_{+0}) \alpha^{\prime}, \;\;
\frac{\l^{-1}}{2} \ad (V_{+0}) \alpha^{\prime}\right).
\ee
 Set $V_{+}$ as 
\be
 V_{+}(z, \bar z, \l) =
 \begin{pmatrix} 
  e^{\mu_1 v^3_+(z, \bar z, \l)} & 0 &v^1_+(z, \bar z, \l) \\
  0 & e^{\mu_2 v^3_+(z, \bar z, \l)} & v^2_+(z, \bar z, \l)\\
  0 & 0 &1
 \end{pmatrix}.
\ee
 Then using the matrix form of $\alpha^{\prime}$ given in \eqref{eq:MCvarphi}, 
 we can rephrase \eqref{eq:alphagauge} without 
 the term $-\frac{1}{2}\l^{-1}\ad (V_+) \alpha^{\prime}$  as
\be
\left\{
 \begin{array}{l}
 \mu_k(\frac{1}{2} \varphi^3_z - (v^3_+)_z) dz \;\;\mbox{for the $(k, k)$-entry $(k=1, 2)$}, \\[0.1cm]
 \left(\frac{1}{2}e^{\mu_k(v^3_+ -\varphi^3)} \varphi^k_z
 -\mu_k(\frac{1}{2} \varphi^3_z - (v^3_+)_z) v^k_+  -  (v^k_+)_z\right) dz
 \;\;\mbox{for the $(k, 3)$-entry $(k=1, 2)$}, \\
 \end{array}
\right.
\ee
 and zero for the other $(k, \ell)$-entries.
 Since $\F_-$ takes values in $\NLGCDN$, thus $\frac{1}{2} \varphi^3_z - (v^3_{+0})_z$
 needs to vanish,  where $v^3_+ = v^3_{+0} + v^3_{+1} \l +  v^3_{+2}\l^2 + \cdots$.
 Moreover since $\F_-$ is also a meromorphic function on $\D$, that is, 
 it is independent of 
 $\bar z$. Thus setting $\bar z =0$ on $\frac{1}{2} \varphi^3_z - (v^3_{+0})_z$, we have 
 $\frac{1}{2} \varphi^3_z(z, 0) - (v^3_{+0})_z(z, 0) =0$.
 Finally, using the initial condition $\varphi^{3}(0, 0) = v^3_{+0}(0, 0) =0$, we obtain 
\begin{equation}\label{eq:Cauchydata}
 v^3_{+0}(z, 0) = \frac{1}{2} \varphi^3(z, 0).
\end{equation}
 We now compute the $(0, 1)$-part of \eqref{eq:righthand}.
 Then a similar computation shows that 
 using the matrix form of $\alpha^{\prime \prime}$ given in \eqref{eq:MCvarphi}, 
 we can rephrase  \eqref{eq:alphagauge2} without 
 the term $-\frac{1}{2} \l\ad (V_+) \alpha^{\prime}$  as
\begin{equation}\label{eq:antialphamatrix}
\left\{
 \begin{array}{l}
 \mu_k(\frac{1}{2} \varphi^3_{\bar z} - (v^3_+)_{\bar z}) d\bar z 
  \;\;\mbox{for the $(k, k)$-entry $(k=1, 2)$}, \\[0.1cm]
 \left(\frac{1}{2}e^{\mu_k(v^3_+ -\varphi^3)} \varphi^k_{\bar z}
 -\mu_k(\frac{1}{2} \varphi^3_{\bar z} - (v^3_+)_{\bar z}) v^k_+  -  (v^k_+)_{\bar z}\right) d\bar z
 \;\;\mbox{for the $(k, 3)$-entry $(k=1, 2)$}, \\
 \end{array}
\right.
\end{equation}
 and zero for the other $(k, \ell)$-entries.
 The constant term of the $(1, 1)$-entry in \eqref{eq:antialphamatrix}
 needs to  vanish, that is,
 $(v_{+0}^3)_{\bar z}(z, \bar z) = \frac{1}{2} \varphi^3_{\bar z}(z, \bar z)$.
 Thus using \eqref{eq:Cauchydata}, we have
\be
 v_{+0}^3(z, \bar z) = \frac{1}{2} \varphi^3 (z, \bar z).
\ee
 Then the constant term of $(1, 3)$- and $(2, 3)$-entries
 in \eqref{eq:antialphamatrix} can be rephrased as 
\be
 \frac{1}{2} e^{- \frac{1}{2}\mu_k \varphi^3} \varphi^k_{\bar z} -  (v^k_{+0})_{\bar z} =0, \;\;(k =1, 2).
\ee
 This equation is the inhomogeneous Cauchy-Riemann equation and the solution can 
 be explicitly given by \cite[Theorem 1.2.2]{Hormander}:
\be
 v^k_{+0}(z, \bar z) = 
 \frac{1}{4 \pi i}\int_{\mathbb D} \frac{ e^{- \frac{1}{2} \mu_k \varphi^3(\xi, \bar \xi)}\partial_{\bar \xi} \varphi^k(\xi, \bar \xi)}{\xi-z} \> d\xi \wedge d \bar \xi,\;\;\;(k=1, 2).
\ee
 Finally a straightforward computation shows that the normalized potential 
 $\xi$ is given by \eqref{eq:xi}.
\end{proof}
 We now give the converse procedure.

 {\bf Step 1:} Let us choose a pair of normalized potentials 
 $\N_{-} (z, \l) = (\l^{-1} \xi(z), -\l^{-1}\xi(z))$ with $\xi$ in \eqref{eq:xi} and 
 consider the following linear ordinary differential equation: 
\be
 d C = C (\l^{-1}\xi),\;\;\mbox{with}\;\;C(z_*) = \id,
\ee
 where $z_*$ is some base point in some simply-connected domain $\mathbb D$ in 
 $\mathbb C$.
 It is easy to see that the solution can be computed explicitly as 
\be
 C(z, \l) =
 \begin{pmatrix}
  e^{\l^{-1} \mu_1 \Xi(z)} & 0 & \l^{-1}\int_{z*}^z \xi^1 (t)e^{ \l^{-1}\mu_1\Xi(t)}\>dt\\
  0 & e^{\l^{-1} \mu_2 \Xi(z)} &  \l^{-1}\int_{z*}^z \xi^2 (t)e^{\l^{-1}\mu_2\Xi(t)}\>dt\\
  0 & 0 & 1
 \end{pmatrix},
\ee
 where $\Xi (z) =\int_{z_*}^z \xi^3 (t)\> dt$. Let $\R_{-}$ denote the pair 
\be
 \R_{-}(z, \l) = (C (z, \l), \;\;C(z, -\l)) \in \NLGCD.
\ee

 {\bf Step 2:} 
 The Iwasawa decomposition in Theorem \ref{thm:Iwasawa} for $\R_{-}$ gives 
\be
 \R_{-} = \F \W_+, \;\;(\F \in \LGD_{\sigma}, \; \W_+ \in \PLGCD_{\sigma}),
\ee
 where $\F(z, \bar z, \l) =(F(z, \bar z, \l), F(z, \bar z, -\l))$ 
 and $\W_+ (z, \bar z, \l) =(W_+(z, \bar z, \l), W_+(z, \bar z, -\l))$.
 Here we define the {\it real part} of a loop $f(z, \bar z, \l)$ by
\be
 \Re (f(z, \bar z, \l)) = 
 \frac{1}{2}\left(f(z, \bar z, \l) + \overline{f(z, \bar z, 1/\bar \l)}\right),
\ee
 which is of course real for $\l \in \mathbb S^1$.
 Then using Theorem \ref{thm:Iwasawa-sol} the first component $F(z,\bar{z},\lambda)$ of 
 $\mathcal{F}(z,\bar{z},\lambda)$ 
 is given explicitly as follows:
\be
 F(z,\bar{z},\lambda)=
 \left(
 \begin{array}{cccc}
  e^{2 \mu_1 \Re (\l^{-1}\Xi(z))} & 0 &  e^{2 \mu_1  \Re (\l^{-1}\Xi(z)) } 
 \tilde f^{1}(z,\bar{z},\lambda) \\
  0 & e^{2 \mu_2 \Re (\l^{-1}\Xi(z))} &  e^{2 \mu_2 \Re (\l^{-1}\Xi(z)) } 
 \tilde f^{2}(z,\bar{z},\lambda)\\
  0 & 0 & 1
 \end{array}
 \right),
\ee
 where $\tilde f^{k}(z,\bar{z},\lambda), \;(k =1, 2),$ are real parts of the Iwasawa 
 decomposition of the scalar loop, that is, 
\be
g^k = \tilde f^k + g^k_+ \;\;\mbox{for}\;\; g^k(z, \bar z, \l) = \l^{-1} e^{-2\mu_k  \Re (\l^{-1}\Xi(z)) } 
 \int_{z_*}^z \xi^{k}(t) e^{\l^{-1} \mu_k  \Xi(t)}\>dt, 
\ee
 where $g^k_+$ denotes the positive element.
 Using the expansion $g^k(z, \bar z, \l) = \sum_{j =-\infty}^{\infty}
 g_j^k(z, \bar z) \l^j$, $\tilde f^k$ and $g^k_+$ are given by
\be
\left\{
\begin{array}{l}
 \tilde f^{k}(z, \bar z, \l) = \sum_{j< 0} \left(g^k_j (z, \bar z)\l^j  
 +  \overline{g^k_j(z, \bar z)} \; \l^{-j}\right),  \\[0.2cm]
 g^k_+(z, \bar z, \l) =\sum_{j \geq 0}  g^k_j(z, \bar z) \l^j
 - \sum_{j< 0}\overline{g^k_j (z, \bar z)} \; \l^{-j}.  
\end{array}
\right.
\ee

 {\bf Step 3:} Then the extended solution $\hat{F}(z,\bar{z},\lambda)=
 F(z,\bar{z},\lambda)F(z,\bar{z},1)^{-1} =
 (\hat f^1, \hat f^2, \hat f^3)$ is given by 
\be
\left\{
\begin{array}{ll}
\hat{f}^k(z,\bar{z},\l)
 &= e^{2 \mu_k \Re (\l^{-1} \Xi(z))}(\tilde f^k(z, \bar z, \l) - \tilde f^k(z, \bar z, 1)), \;(k =1, 2),  \\
 \hat f^3(z,\bar{z},\lambda) & =2 \Re ((\lambda^{-1}-1) \Xi(z)).
\end{array}
\right.
\ee
 We summarize the discussion above in the following theorem.
\begin{Theorem}\label{thm:repforsol}
  Any ${}^{(0)}\nabla$-harmonic map 
 $\varphi=(\varphi^1,\varphi^2,\varphi^3): \mathbb{D}\to G(\mu_1,\mu_2)$ 
 can be represented by some meromorphic functions $\xi^k(z), \;(k =1, 2, 3),$ 
 as follows$:$
\begin{align*}
 \varphi^{1}(z,\bar{z})&
 = \exp \left(-2 \mu_1 \Re \int_{z_*}^z \xi^3(t) \> dt \right)
 \left( \tilde f^{1}(z, \bar z, -1) -\tilde f^{1}(z, \bar z, 1)\right), \\ 
 \varphi^{2}(z,\bar{z})&= 
 \exp \left(-2 \mu_2 \Re \int_{z_*}^z \xi^3(t) \> dt \right)
 \left( \tilde f^{2}(z, \bar z, -1) -\tilde f^{2}(z, \bar z, 1)\right), \\
\varphi^{3}(z,\bar{z})&= -4 \mathrm{Re} \int_{z_*}^z \xi^3(t) \>dt.
\end{align*}
 Here $\tilde f^k(z, \bar z, \l), \;(k =1, 2),$ is the real part of the Iwasawa 
 decomposition of the scalar loop $g^k = \tilde f^k + g^k_+$ with
\be
  g^k(z, \bar z, \l) = \l^{-1}\exp \left(-2\mu_k  
 \Re (\l^{-1}\int_{z_*}^z \xi^3(t) \> dt) \right)
 \int_{z_*}^z \xi^{k}(t) \exp\left(\l^{-1} \mu_k \int_{z_*}^t \xi^3(s) \> ds\right)\>dt.
\ee
\end{Theorem}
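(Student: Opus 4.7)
The plan is to combine the normalized-potential extraction (established in the preceding theorem on normalized potentials) with the explicit three-step DPW reconstruction (Steps 1--3 given before the statement), specialized to the solvable group $G(\mu_1,\mu_2)$. The theorem is essentially an explicit realization of the reconstruction scheme for the potential $\mathcal N_-(z,\lambda)=(\lambda^{-1}\xi(z),-\lambda^{-1}\xi(z))$ with $\xi$ of the form \eqref{eq:xi}, which by the earlier normalized-potential theorem is exactly the pair associated with a given $\neutral$-harmonic map $\varphi$. By the uniqueness of the DPW reconstruction, it thus suffices to compute Steps 1--3 in closed form for this specific group and match the output to the formulas stated.

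First I would carry out Step 1 explicitly. Because $\xi$ is strictly upper triangular with diagonal proportional to $\xi^3$, the linear ODE $dC=C(\lambda^{-1}\xi)$ with $C(z_*,\lambda)=\mathrm{id}$ integrates in closed form: the diagonal exponent is $\lambda^{-1}\mu_k\Xi(z)$ with $\Xi(z)=\int_{z_*}^z\xi^3(t)\,dt$, and the $(k,3)$-entry is $\lambda^{-1}\int_{z_*}^z\xi^k(t)\exp(\lambda^{-1}\mu_k\Xi(t))\,dt$ for $k=1,2$. This yields $\mathcal R_-=(C(z,\lambda),C(z,-\lambda))\in\NLGCD$ directly, without any convergence issue, since each entry is a meromorphic Laurent series.

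Second, I would apply the loop-group Iwasawa decomposition of Theorem \ref{thm:Iwasawa-sol} to $\mathcal R_-$. The diagonal exponent $\lambda^{-1}\mu_k\Xi(z)$ splits as $2\mu_k\Re(\lambda^{-1}\Xi(z))$ on the unitary factor plus a corresponding positive part, using the elementary scalar split $\lambda^{-1}a=\Re(\lambda^{-1}a)+(\lambda^{-1}a-\Re(\lambda^{-1}a))$ where the second summand extends to $|\lambda|<1$ after multiplication by the appropriate holomorphic compensation. For the $(k,3)$-entry, Theorem \ref{thm:Iwasawa-sol} prescribes that one first divides by the unitary diagonal factor $\exp(\mu_k\tilde x^3(\lambda))=\exp(2\mu_k\Re(\lambda^{-1}\Xi(z)))$ and only then Iwasawa-splits the resulting scalar loop. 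The loop obtained after this preconditioning is precisely $g^k(z,\bar z,\lambda)$ of the statement, and writing its scalar split as $g^k=\tilde f^k+g^k_+$ produces the unitary $(k,3)$-entry of $F(z,\bar z,\lambda)$ in the form $\exp(2\mu_k\Re(\lambda^{-1}\Xi(z)))\,\tilde f^k(z,\bar z,\lambda)$.

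Finally, Step 3 produces the extended solution $\hat F(z,\bar z,\lambda)=F(z,\bar z,\lambda)F(z,\bar z,1)^{-1}$, and the components of $\varphi$ are read off from $\hat F(z,\bar z,-1)$. The diagonal contributions telescope into the prefactors $\exp(-2\mu_k\Re\int_{z_*}^z\xi^3)$, the upper-right entries yield the difference $\tilde f^k(z,\bar z,-1)-\tilde f^k(z,\bar z,1)$, and the third coordinate comes from $2\Re((\lambda^{-1}-1)\Xi(z))$ at $\lambda=-1$, giving $-4\Re\int_{z_*}^z\xi^3$. The main obstacle, and essentially the only nontrivial bookkeeping, is the off-diagonal Iwasawa split: the $(k,3)$-entry cannot be split directly but only after conjugating away the diagonal unitary factor, which is precisely why the prefactor $\exp(-2\mu_k\Re(\lambda^{-1}\Xi))$ appears in the definition of $g^k$. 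Once this point is handled according to Theorem \ref{thm:Iwasawa-sol}, the identification with the stated formulas is routine, and the agreement of the reconstructed map with the original $\varphi$ follows from the uniqueness of the DPW scheme.
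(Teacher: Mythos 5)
Your proposal is correct and follows essentially the same route as the paper: the theorem is indeed proved there by combining the normalized-potential theorem with the explicit Steps 1--3 (closed-form integration of $dC=C(\lambda^{-1}\xi)$, Iwasawa splitting via Theorem \ref{thm:Iwasawa-sol} after conjugating away the diagonal unitary factor, and evaluation of the extended solution at $\lambda=-1$). Your identification of the off-diagonal preconditioning as the one nontrivial bookkeeping point matches exactly how the paper handles the $(k,3)$-entries.
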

\begin{Remark}
\mbox{}
\begin{enumerate}
 \item An integral representation formula for conformal maps into $G(\mu_1,\mu_2)$ which are 
 harmonic with respect to the metric was obtained in \cite{InoguchiLee}.

\item  In case $\mu_1=\mu_2=0$, this formula reduces to
 the classical formula of harmonic functions:
\begin{equation*}
\varphi^{k}(z,\bar{z})= -4\>\mathrm{Re}\int_{z_*}^{z} \xi^k(t)\>dt, \;\;(k =1, 2, 3).
\end{equation*}
\end{enumerate}
\end{Remark}
 Let us define $3 \times 3$ matrices $E_{p q}$ 
 with $(i, j)$-entry one if $(i, j) =(p, q)$ and zero otherwise.
 Let us consider a map $\varphi(x,y)=\exp(xE_{11})\cdot \exp(yE_{22})$.
 As we have seen before, $\varphi$ is $\neutral$-harmonic. 
 With respect to the left-invariant Riemannian metric
 $ds^2 = e^{-2\mu_1 x^3 }(dx^1)^2 + e^{-2\mu_2 x3} (dx^2)^2 + (dx^3)^2$, 
 $\varphi$ is a flat surface with constant mean 
 curvature $(\mu_1+\mu_2)/2$. 
 Moreover, one can check that this flat surface is 
 the plane $x^3=\mathrm{constant}$.
\begin{enumerate}
\item If $(\mu_1,\mu_2)=(0,0)$, then
$\varphi$ is a totally geodesic plane in the Euclidean $3$-space $\mathbb{E}^3$.  
\item If $\mu_1 =0, \mu_2 =c \neq 0$, then $\varphi$ is a cylinder over a horizontal
 horocycle in $\mathbb H^2 (-c^2)$.
\item If $\mu_1=\mu_2=c\not=0$, then $\varphi$ is a horosphere in 
the hyperbolic $3$-space $\mathbb{H}^3(-c^2)$.
\item If $\mu_1=-\mu_2\not=0$, then $\varphi$ is 
a non-totally geodesic minimal surface.
\end{enumerate}
 Thus horospheres in $\mathbb{H}^3(-c^2)$ are 
 non-harmonic with respect to the metric, but harmonic with 
 respect to the neutral connection.
 We can reconstruct these surfaces from normalized potentials with $\xi^3(z)=0$.
 It is easy to see that $C(z, \l)$ and the unitary part of 
 its Iwasawa decomposition $F$ given by $C = FV_+$ with $F \in \Lambda G(\mu_1,\mu_2)$ 
 and  $V_+ \in \Lambda^+ G(\mu_1,\mu_2)^{\C}$ are as follows:
\be
 C(z, \l) = 
\begin{pmatrix} 
  1 & 0 & \l^{-1} \int_{z*}^z  \xi^1(t) \>dt \\
  0 & 1 &  \l^{-1} \int_{z*}^z \xi^2(t)  \>dt \\
  0 & 0 & 1
\end{pmatrix}, \;
 F(z,\bar{z}, \l) =
 \begin{pmatrix}
  1 & 0 & 2 \mathrm{Re}(\l^{-1} \int_{z*}^z  \xi^1(t) \>dt)\\[0.1cm]
  0 & 1 & 2 \mathrm{Re}(\l^{-1} \int_{z*}^z  \xi^2(t) \>dt) \\
  0 & 0 & 1
 \end{pmatrix}.
\ee
 Hence we obtain the extended solution
\be
\hat{F}(z,\bar{z},\lambda)=
F(z,\bar{z},\lambda)
F(z,\bar{z},1)^{-1}=
\begin{pmatrix}
  1 & 0 & 2 \Re\left((\l^{-1} -1)\int_{z*}^z  \xi^1(t) \>dt\right)\\[0.1cm]
  0 & 1 & 2 \Re\left((\l^{-1} -1)\int_{z*}^z  \xi^2(t) \>dt\right)\\
  0 & 0 & 1
 \end{pmatrix}.
\ee
 For instance, choosing $z_*=0, \xi^1 (z)=-1/4$ and  $\xi^2 (z) =i/4$  we obtain 
 the plane $\varphi(z,\bar{z}, \l=-1)=(x,y,0)$.
\begin{Remark}
 The generalized Weierstrass type representation of 
 $\neutral$-harmonic maps into the solvable Euclidean motion group $\mathrm{SE}_2$ 
 can be given along the same line.
 Conjugated  $\mathrm{SE}_2$ by the element
\be
\frac{1}{\sqrt{2}}
 \begin{pmatrix} 
 1 &  \sqrt{-1} & 0 \\ 
 \sqrt{-1} &  1& 0 \\ 
 0 & 0 & \sqrt{2}
 \end{pmatrix},
\ee
 we obtain the isomorphic solvable group
\be
 \mathrm{SE}_2
 =
 \left\{
 (x^1, x^2, x^3) = 
 \begin{pmatrix}
 e^{\sqrt{-1} x^3} & 0 & \frac{1}{\sqrt{2}} (x^1 + \sqrt{-1} x^2) \\
 0 & e^{- \sqrt{-1} x^3} & \frac{\sqrt{-1}}{\sqrt{2}} (x^1 - \sqrt{-1} x^2) \\
 0 &0 & 1
 \end{pmatrix}
\right\} = {\rm U}_1 \ltimes \mathbb C.
\ee
 This means that $\mathrm{SE}_2$ is a real form of 
 the complexified solvable Lie group $G(1, -1)^{\mathbb C}$.
 Moreover, the $\neutral$-harmonicity of a map 
 $\varphi=(\varphi^1, \varphi^2, \varphi^3)$ can be rephrased as 
\begin{equation}\label{eq:SE2-rephrased}
 \tilde \varphi^1_{z\bar z} - \frac{1}{2}(\tilde \varphi^1_{z}\tilde \varphi^3_{\bar z}+\tilde \varphi^1_{\bar z}\tilde \varphi^3_z)=0,\; \;
  \tilde \varphi^2_{z\bar z} + \frac{1}{2}(\tilde \varphi^2_{z}\tilde \varphi^3_{\bar z}+\tilde \varphi^2_{\bar z}\tilde \varphi^3_z)=0,
\end{equation}
 where 
 $\tilde \varphi^1= \frac{1}{\sqrt{2}} (\varphi^1 + \sqrt{-1}\varphi^2)$,
 $\tilde \varphi^2= \frac{1}{\sqrt{2}} (\varphi^1 - \sqrt{-1}\varphi^2)$ 
and $\tilde \varphi^3 =\sqrt{-1} \varphi^3$.
 This is the $\neutral$-harmonicity equation of the solvable Lie group 
 $G(\mu_1, \mu_2)$ with $\mu_1 = -\mu_2 =1$ in \eqref{sol-0-harm}.
 So the generalized Weierstrass type representation of 
 $\neutral$-harmonic maps into $\mathrm{SE}_2$ is quite similar to 
 the case of the solvable Lie group $G(1, -1)$. 
\end{Remark} 
\begin{Remark}
 Solvable Lie groups equipped with a
 left invariant Riemannian metric 
 play an important and fundamental role in many branches of mathematics. 
 A Riemannian manifold is said to be a \textit{solvmanifold} if it has a 
 transitive solvable Lie group of isometries. 
 It has been shown by Heintze \cite{Heintze} that 
 connected homogeneous Riemannian manifolds
 of strictly negative curvature are solvable Lie groups with left invariant metrics.
 Moreover such spaces are diffeomorphic to $N\times \mathbb{R}^{+}$,
 where $N$ is a simply connected nilpotent Lie group.
 Azencott and Wilson \cite{AW} showed that every connected, 
 simply-connected, homogeneous Riemannian manifold of non-positive curvature admits a 
 solvable Lie group acting simply-transitively by isometries. 
 Any simply connected solvmanifold of non-positive curvature is 
\textit{standard} in the sense of 
\cite{AW}. 

 Solvable Lie groups with 
 left invariant Riemannian metric provide many examples of homogeneous 
 Einstein manifolds of non-positive curvature, for example,
 Damek-Ricci Einstein spaces and 
 Boggino-Damek-Ricci Einstein spaces \cite{BTV, Heber}. 
 Lauret proved that any Einstein solvmanifold is standard \cite{Lauret}. 
 Alekseevskii \cite{Alek} conjectured that for every 
 non flat non-compact homogeneous Einstein manifold $G/K$, 
 the isotropy subgroup $K$ must be a maximal compact subgroup of 
 $G$.
 If this conjecture is true, then the classification of non-compact 
 homogeneous Einstein manifolds is reduced to the 
 investigation of solvable Lie groups with left invariant Einstein metrics. 
 This conjecture is still an open problem.

 The class of Carnot spaces contains all rank one 
 Riemannian symmetric spaces of non-compact type. 
 According to Pansu \cite{Pansu}, a
 simply-connected solvable Lie group $G$ equipped with a left 
 invariant Riemannian metric 
 $ds^2=\langle\cdot,\cdot\rangle$ is said to be a 
 $k$-\textit{step Carnot space} if $G$ is the semidirect product of a 
 nilpotent Lie group $N$ and the abelian group $\mathbb{R}^{+}$.
 Then the Lie algebras $\mathfrak{g} = {\rm Lie} (G)$ 
 and $\mathfrak{n} = {\rm Lie}(N)$ satisfy:
 $\mathfrak{g}= \mathfrak{n}\oplus\mathbb{R}\>H$,
\be
\mathfrak{n}=\sum_{i=1}^{k}\mathfrak{n}_{i}, 
\ \
\mathfrak{n}_{i}= 
\{X\in\mathfrak{n}\ | \mathrm{ad}(H)X=i X\},\
\; (i=1,\dots ,k),\ \ 
\langle H,H\rangle=1,\ \ H\perp \mathfrak{n}
\ee
and $(G,ds^2)$ is of negative curvature \cite{Pansu}. 
 For example, 
 real hyperbolic space is a 1-step Carnot 
 space and the other rank 1 symmetric spaces of non-compact type 
 are 2-step Carnot spaces. Since harmonic maps into Riemannian symmetric spaces of
 non-compact type can be investigated 
 by the generalized Weierstrass type representation 
 \cite{DPW}, 
 it seems to be interesting to study the generalized 
 Weierstrass type representation for \textit{non-symmetric} Carnot spaces.
\end{Remark}

\bibliographystyle{plain}
\def\cprime{$'$}

\end{document}